\renewcommand{\paragraph}{%
\@startsection{paragraph}{4}%
{\z@}{1.5ex \@plus 1.5ex \@minus .2ex}{-0.7em}%
{\normalfont\normalsize\bfseries}%
}
\def\thm@space@setup{%
  \thm@preskip=\parskip \thm@postskip=0pt
}
\setlist[itemize]{leftmargin=5mm}
\DeclareSymbolFont{timesoperators}{T1}{ptm}{m}{n}
\DeclareMathAlphabet{\mathbb}{U}{jkpsyb}{m}{n}
\SetMathAlphabet{\mathbb}{bold}{U}{jkpsyb}{bx}{n}
\definecolor{colorLink}{RGB}{0,100,162}
\definecolor{colorCite}{RGB}{8,124,100}
\newcommand{\R}{\mathbb{R}}
\newcommand{\f}[1]{\boldsymbol{#1}}
\newcommand{\sym}{\mathrm{sym}}
\newcommand{\de}{\mathrm{d}}
\newcommand{\tr}{\mathrm{tr}}
\newcommand{\tu}{\tilde{\f u}}
\DeclareMathOperator{\C}{\mathcal{C}}
\newcommand{\expect}[1]{\mathbb E \left[ #1 \right]}
\newcommand{\expectp}[1]{\mathbb E \left[\psi\left( #1\right) \right]}
\newcommand{\D}{\mathcal{D}}
\newcommand{\N}{\mathbb{N}}
\newcommand{\ov}[1]{\overline{#1}}
\newcommand{\fs}[1]{\tilde{\f{#1}}}
\newcommand{\Tr}[1]{\mathrm{Tr}\left[#1 \right]}
\renewcommand{\P}{\mathbb{P}}
\newcommand{\dx}{\de x}
\newcommand{\ds}{\de s}
\theoremstyle{plain}
\newtheorem{theorem}{Theorem}[section]
\crefname{theorem}{Theorem}{Theorems}
\newtheorem{corollary}[theorem]{Corollary}
\crefname{corollary}{Corollary}{Corollaries}
\newtheorem{lemma}[theorem]{Lemma}
\crefname{lemma}{Lemma}{Lemmas}
\crefname{proposition}{Proposition}{Propositions}
\crefname{claim}{Claim}{Claims}
\crefname{theoremA}{Theorem}{Theorems}
\crefname{propositionA}{Proposition}{Propositions}
\theoremstyle{definition}
\newtheorem{definition}[theorem]{Definition}
\crefname{definition}{Definition}{Definitions}
\crefname{notation}{Notation}{Notations}
\crefname{acknowledgements}{Acknowledgements}{Acknowledgements}
\newtheorem{assumption}[theorem]{Assumption}
\crefname{assumption}{Assumption}{Assumptions}
\newtheorem{remark}[theorem]{Remark}
\crefname{remark}{Remark}{Remarks}
\crefname{observation}{Observation}{Observations}
\crefname{manualassumptioninner}{Assumption}{Assumptions}
\crefname{customthm}{Theorem}{Theorems}
\numberwithin{equation}{section}
\title{Probabilistically Strong Solutions to Stochastic Euler Equations}
\author{Benjamin Gess$^{1}$, Robert Lasarzik$^{2}$}
\institute{
    Institut f\"{u}r Mathematik, Technische Universit\"{a}t Berlin \& Max–Planck–Institute for Mathematics in the Sciences, Leipzig, Email: \href{mailto:benjamin.gess@tu-berlin.de}{\color{black} \texttt{benjamin.gess@tu-berlin.de}} 
    \and
    Weierstrass Institute for Applied Analysis and Stochastics (WIAS), Berlin, Email: \href{mailto:robert.lasarzik@wias-berlin.de}{\color{black} \texttt{robert.lasarzik@wias-berlin.de}} 
    }
\begin{document}

\maketitle

\begin{abstract}
    In this paper, we establish the existence of probabilistically strong, measure-valued solutions for the stochastic incompressible Navier--Stokes equations and prove their convergence, in the vanishing viscosity limit, to probabilistically strong solutions for the stochastic incompressible Euler equations. In particular, this solves the open problem of constructing probabilistically strong solutions for the stochastic Euler equations that satisfy the energy inequality for general $L^2$ initial data.  We introduce the concept of energy-variational solutions in the stochastic context in order to treat the nonlinearities without changing the probability space. Furthermore, we extend these results to fluids driven by transport noise.
\end{abstract}

\keywords{Stochastic Navier--Stokes equations, Stochastic Euler equations, probabilistically strong solutions,  vanishing viscosity limit}
\section{Introduction}

In this work, we present four main contributions in stochastic fluid dynamics. First, we establish the existence of probabilistically strong, measure-valued solutions to the three-dimensional stochastic incompressible Navier--Stokes equations. Second, we prove the convergence of these solutions to probabilistically strong solutions of the incompressible 3D Euler equations in the vanishing viscosity limit. In particular, this yields the existence of probabilistically strong solutions to the Euler equations, for any initial condition and obtained as limits of the Navier--Stokes system. Third, we introduce the concept of energy-variational solutions to the field of stochastic fluid dynamics and demonstrate the versatility of this framework in establishing the aforementioned vanishing viscosity limit. Fourth, we demonstrate that the framework developed in this work is robust enough to cover not only additive noise but also transport noise, proving the convergence of Navier--Stokes equations driven by transport noise to probabilistically strong solutions of the Euler system. We next discuss these contributions in turn.

Since the foundational work of Flandoli and G\k{a}tarek~\cite{FlandoliGatarek}, there has been a plethora of contributions to the field of stochastic fluid dynamics extending this constructive approach. Indeed, it has become the predominant method for adapting deterministic compactness proofs to the stochastic context (see, e.g.,~\cite{breit2021dissipative, strongsol2} and references therein). This construction typically proceeds by first establishing uniform \textit{a priori} estimates on an approximating system, which yield finite-moment bounds on compactly embedded spaces. Subsequently, the Skorohod--Jakubowski representation theorem~\cite{Jakubowski_2} is invoked. Analogous to the deterministic case, this allows one to deduce almost sure convergence of the approximations to a limit. However, a critical feature of this construction is that the application of Skorohod--Jakubowski requires a change of the underlying probability space to handle the lack of strong compactness in the random variable, and the resulting possibility of oscillations. Consequently, these constructions lead to probabilistically weak solutions (or martingale solutions). This presents a significant restriction in settings where uniqueness is unknown, such as the 3D Navier--Stokes equations, as the Yamada--Watanabe argument cannot be utilized to recover probabilistically strong solutions.\\
The restriction to probabilistically weak solutions has received considerable attention in the literature, marking the existence of probabilistically strong, analytically weak solutions to the 3D stochastic Navier--Stokes equations as an open problem. In recent years, significant progress has been made via convex integration techniques, proving the existence of probabilistically strong solutions to both Navier--Stokes and Euler equations~\cite{hofmanova2024nonuniqueness}. However, these results come with a caveat: such solutions are generally ``wild'' and, for Navier--Stokes, cannot be obtained within the Leray--Hopf class, nor, for Euler, can they be constructed for arbitrary initial data while satisfying energy inequalities, nor can be obtained as vanishing viscosity limits from Navier-Stokes. \\
The first contribution of this work lies in the demonstration that the standard martingale approach is not the only avenue for existence theory. In fact, the concept of Young measures offers a tool to capture oscillations, thereby rendering the change of probability space unnecessary. In the context of the 3D Navier--Stokes equations, this unveils a crucial distinction between the deterministic and stochastic theories: In the deterministic case, the concept of an analytically weak solution is strictly stronger than that of a measure-valued solution; thus, once analytically weak solutions are established, the measure-valued concept becomes less relevant. In the probabilistic context, however, there is no such strict hierarchy. Via standard compactness arguments, one faces a trade-off: one is led either to \textit{probabilistically weak, analytically weak} solutions (via Skorohod--Jakubowski) or, as shown in the present work, to \textit{probabilistically strong, measure-valued} solutions. The first result of this work makes this distinction rigorous by proving the existence of a joint probabilistically strong, measure-valued solution that after a change of probability space also is probabilistically weak, analytically weak.

The second main contribution of this work applies this observation to the stochastic incompressible 3D Euler equations. In this setting, the approach becomes particularly powerful, as it allows to construct probabilistically strong, measure-valued solutions for the stochastic 3D Euler equations, for any initial condition, satisfying an energy inequality, and obtained as limits of the stochastic Navier–Stokes equations; a result previously unknown. Specifically, Breit and Moyo~\cite{breit2021dissipative} constructed probabilistically weak (martingale) dissipative measure-valued solutions, explicitly identifying the development of a "pathwise approach" to obtain the existence of  probabilistically strong solutions as an open problem. While, in a ground-breaking work, Hofmanová, Zhu, and Zhu in~\cite{hofmanova2024nonuniqueness} achieved probabilistically strong solutions via convex integration, those constructions generally yield non-unique solutions that do not necessarily satisfy the energy dissipation inequalities required for stability, or cover only specific classes of initial conditions. Indeed, the authors of~\cite{hofmanova2024nonuniqueness} conclude "[...] the global existence of probabilistically strong solutions for given initial data is still an open problem". 

The third main contribution is the transference of the concept of energy-variational solutions to the field of stochastic fluid dynamics. Originally developed for deterministic systems~\cite{EiterLas_2, EnvarIncomp}, these solutions are defined not by satisfying the equation in the sense of distributions directly, but via the variation of the energy dissipation. We show that in the stochastic setting, these energy-variational solutions imply dissipative measure-valued solutions based on defect measures (similar to the framework in~\cite{hofmanova2024nonuniqueness}). We further demonstrate the potential of the energy-variational framework, as it is shown to allow for an efficient proof of the convergence of Navier--Stokes to Euler, effectively bypassing many of the technical challenges associated with identifying nonlinear limits.

Finally, as a fourth contribution, we extend our results to the setting of transport noise. We establish the convergence of the stochastic Navier--Stokes equations driven by transport noise to probabilistically strong solutions of the Euler equations. Transport noise is of fundamental physical significance as it models the advection of the fluid by unresolved, rapidly fluctuating small-scale features. This modeling paradigm finds its roots in the geometric variational approaches of Holm and collaborators (the SALT framework)~\cite{Holm2015}, which preserve the circulation properties of the fluid structure. Complementarily, the works of Flandoli~\cite{Flandoli2011} highlight the role of transport noise in the potential regularization of singularities and the enhancement of dissipation, while the foundational analysis of Mikulevicius and Rozovskii~\cite{MikRoz2004} provides the necessary stochastic analytic framework for these nonlinear SPDEs. Again, this result proves for the first time the existence of probabilistically strong solutions for the stochastic Euler equations with transport noise for any initial condition, and obtained as a vanishing viscosity limit from 3D stochastic Navier-Stokes equations.

The literature on stochastic fluid dynamics is huge, and an attempt for a complete account would go far beyond this text; therefore, we refer the interested reader to the comprehensive monographs~\cite{BreitFeireislHofmanova, FlandoliBook} for a modern treatment of the field. The foundational theory of martingale solutions in the three-dimensional setting is rigorously detailed in~\cite{FlandoliBook}, while the analytic framework for strong solutions and variational methods is extensively treated in~\cite{ChowBook}. For the contrasting case of two-dimensional flows, where well-posedness and ergodic theory are established, we direct the reader to~\cite{KuksinShirikyan}. A foundational framework for probabilistically strong solutions was established by Glatt-Holtz and Vicol~\cite{strongsol2}, who proved local well-posedness in Sobolev spaces $H^s$ for $s>5/2$ and extended the Beale--Kato--Majda blow-up criterion to the stochastic setting. The specific phenomena associated with transport noise and the regularization of fluid equations are the central focus of~\cite{Flandoli2011}. Finally, for the broader stochastic analytic tools underpinning these infinite-dimensional systems, we refer to the classic text~\cite{DaPratoZabczyk}.

The article is structured as follows: Firstly, we introduce the relevant equations, the Navier--Stokes and Euler equations with additive as well as transport noise, the relevant notation and some preliminary results in Section~\ref{sec:2}. Secondly, we introduce the different relevant solution concepts for the Navier--Stokes equations and prove the associated results in Section~\ref{sec:3} and finally in Section~\ref{sec:4}, we consider the Euler equations, prove the existence of probabilistically strong solutions, their equivalence to dissipative weak solutions and the weak-strong uniqueness of energy-variational solutions to both systems, the Navier--Stokes and the Euler equations.

\section{Preliminaries\label{sec:2}}
 
\subsection{The incompressible Navier--Stokes and Euler equations with stochastic forcing}
 We consider simultaneously   the incompressible Navier--Stokes and Euler equations with additive and transport stochastic forcing
\begin{subequations} \label{eq:stochastic_euler_additive}
\begin{align}
\mathrm{d}  \f u +( (\f u \cdot \nabla) \f u -\nu \Delta \f u )\de t + \de \nabla p  &= \sigma^1 \, \de {W} + ( \sigma^2  \cdot \nabla) \f u \circ \de W , && \text{in } (0,T) \times \mathcal{D}, \label{eq:eu1}\\
\nabla \cdot \f u &= 0, && \text{in } (0,T) \times \mathcal{D}, \label{eq:eu2}\\
\nu (I - n \otimes n) \f u = 0 \,,\quad \f u \cdot n &= 0, && \text{on } (0,T) \times \partial\mathcal{D}, \label{eq:eu3}\\
\f u(0,x) &= \f u_0(x), && \text{in } \mathcal{D},\label{eq:eu4}
\end{align}
\end{subequations}
where the unknowns of the system are
 the velocity field  \( \f u(\omega,t,x) \in \mathbb{R}^d \) and the pressure
 \( p(\omega,t,x) \in \mathbb{R} \), which can be viewed as a Lagrange multiplier for the incompressibility constraint~\eqref{eq:eu2}. Given are
     the divergence-free initial velocity field  \( u_0 \in L^p(\Omega;L^2_{\sigma}(\mathcal{D})) \) with $p>4$,   a cylindrical Wiener process   \( {W} \)  on a separable Hilbert space \( \mathfrak{U} \), and
 \( \sigma^1 \in L_2(\mathfrak{U}, L^2_{\sigma}(\mathcal{D})) \), and $ \sigma^2 \in L_2(\mathfrak{U}, H^2(\D)\cap H^1_{0,\sigma}(\D))$ two fixed Hilbert–Schmidt operators.
The operator $(\sigma^2\cdot \nabla)\f u $ has to be interpreted as a Hilbert--Schmidt operator $(\sigma^2\cdot \nabla)\f u \in L_2(\mathfrak{U}, L^2(\mathcal{D}))$ for all $\f u \in H^1(\D)$ in the sense that 
$ \langle ( \sigma\cdot \nabla)\f u, \mathbf h \rangle_{\mathfrak{U}} = \nabla\f u \langle\sigma^2 ,\mathbf h \rangle_{\mathfrak{U}}  \in L^2(\D)$ for any $\mathbf h\in \mathfrak{U}$ and $\f u \in H^1(\D)$ as $H^2(\D)\cap H^1_{0,\sigma}(\D) \hookrightarrow L^\infty(\D)  $ for $d\leq 3 $.

The velocity satisfies the zero outflow boundary condition~\eqref{eq:eu3}$_2$,
where \( n \) denotes the outward unit normal vector to \( \partial\mathcal{D} \) and for $\nu>0$ also the tangential part on the boundary vanishes such that this encompasses homogeneous Dirichlet boundary conditions $\f u = 0 $ on $\partial\D$. 
We note that the pressure $p$ is only a semimartingale thus, in order to avoid cross coupling in the correction terms, we first apply the Helmholtz-projection to the equation  eliminating the pressure. This leads to 
\begin{equation}
    \mathrm{d} \f u + \mathcal{P}\left [  (\f u \cdot \nabla) \f u - \nu \Delta \f u   \right]\de t = 
    \left( \sigma^1 + \mathcal{P} [( \sigma^2 \cdot \nabla) \f u\circ ]\right) \de {W}  \, . 
\end{equation}

We may rewrite the Stratonovich noise term into Itō 
via 
\begin{equation}
    \mathrm{d} \f u +\mathcal{P}\left [  (\f u \cdot \nabla) \f u -\nu \Delta \f u -  \frac{1}{2} (\sigma^2  \cdot \nabla )\mathcal{P}[(\sigma^2  \cdot \nabla )\f u] \right]\de t = 
    \left( \sigma^1 + \mathcal{P}[( \sigma^2 \cdot \nabla) \f u]\right) \de {W}  \, . 
\end{equation}

\subsection{Notation}
Throughout this paper, let $\D \subset \R^d$
be a {bounded}  domain of class $\C^2$ with $d =  2$ or $3$ and $T>0$ fixed.
The space of smooth solenoidal functions with compact support is denoted by $\mathcal{C}_{c,\sigma}^\infty(\D;\R^d)$. By $L^2_{\sigma}( \D) $ and  $H^1_{0,\sigma}(\D)$ we denote the closure of $\mathcal{C}_{c,\sigma}^\infty(\D;\R^d)$ with respect to the norm of $ L^2(\D) $ and $  H^1( \D) $, respectively.
Note that $L^2_{\sigma}(\D) $ can be characterized by $ L^2_{\sigma}(\D) = \{ \f v \in L^2(\D)\lvert 
\nabla \cdot 
 \f v =0 \text{ in }
\D\, , \f n \cdot \f v = 0 \text{ on }
 \partial \D \} $,
 where the first condition has to be understood in the distributional sense and the second condition in the sense of the trace in $H^{-1/2}(\partial \D )$.
 We define the linear space $ \C^2_{0,\sigma}(\ov\D;\R^d)$  via 
    $$ \C^2_{0,\sigma}(\ov\D;\R^d) : = \{ \varphi \in \C^2(\ov\D;\R^d)  | \nabla \cdot \varphi =0 \text{ in }\ov\D \text{ and }\f n \cdot  \varphi = 0 \text{ on }\partial \D\} \,. $$
 With $ \C^1_c([0,T);[0,\infty))$ and  $L^\infty(\Omega;[0,\infty))$ we denote the functions that are nonnegative everywhere and almost everywhere, respectively.  By $(A)_{\sym}$, $(A)_-$, and $(A)_{\sym,-}$, we denote the symmetric, negative definite, and symmetric negative definite part of a quadratic matrix $A\in \R^{d\times d}$. 

Let $\mathfrak{U}$ be a separable Hilbert space and let $\left(\mathbf{e}_k\right)_{k \in \mathbb{N}}$ be an orthonormal basis of $\mathfrak{U}$. We denote by $L_2\left(\mathfrak{U}, L^2_{\sigma}\left(\D\right)\right)$ and $L^2(\mathfrak{U}, H^2(
\D)
\cap H^1_{0,\sigma}(\D))$ the set of Hilbert-Schmidt operators from $\mathfrak{U}$ to $L^2_{\sigma}\left(\D\right)$ and $H^2(
\D)
\cap H^1_{0,\sigma}(\D)$, respectively. For simplicity, we assume that   the two noise terms are orthogonal, \textit{i.e.,} $ \sum_{k\in \N} \langle\sigma^1 ,\mathbf e_k\rangle_{\mathfrak{U}} \otimes \langle\sigma^2 ,\mathbf e_k\rangle_{\mathfrak{U}} = 0$. 
Throughout the paper we consider a cylindrical Wiener process $W=\left(W_t\right)_{t \geq 0}$ 
which has the form
$$
W(t)=\sum_{k \in \mathbb{N}} \beta _k(t) \mathbf{e}_k \,,
 $$
where $\left(\beta_k\right)$ is a sequence of independent real valued Brownian motions. We let $\left(\Omega, \mathcal{F},\left(\mathcal{F}_t\right)_{t \geq 0}, \mathbb{P}\right)$ be the augmented canonical filtration generated by $W$. In particular, $\left(\mathcal{F}_t\right)_{t \geq 0}$ is right-continuous and complete. 
For $\psi \in L^2\left(\Omega, \mathcal{F}, \mathbb{P} ; L^2\left(0, T ; L_2\left(\mathfrak{U}, L^2\left(\D\right)\right)\right)\right)$ progressively measurable, the stochastic integral
$
\int_0^t \psi \mathrm{~d} W 
$
defines a $\mathbb{P}$-almost surely continuous $L^2\left(\D\right)$ valued $\left(\mathcal{F}_t\right)$-martingale. 
Define further $\mathfrak{U}_0 \supset \mathfrak{U}$ as
$$
\mathfrak{U}_0:=\left\{\mathbf{e}=\sum_k \alpha_k \mathbf{e}_k \in \mathfrak{U}: \sum_k \frac{\alpha_k^2}{k^2}<\infty\right\},
$$
thus the embedding $\mathfrak{U} \hookrightarrow \mathfrak{U}_0$ is Hilbert-Schmidt and trajectories of $W$ are $\mathbb{P}$-almost surely continuous with values in  $\mathfrak{U}_0$.
With $\mathrm{Tr}$ we denote the trace in the space of Hilbert--Schmidt operators, \textit{i.e.,} for two operators $ \Phi _i \in L^2(\mathfrak{U},L^2_{\sigma}(\D))$ for $i\in \{1,2\}$,  we define
$$
\Tr{\int_{\D} \Phi_1 \cdot\Phi_2 \de x  } = \sum_{k\in \N} \int_{\D} \langle\Phi_1 ,\mathbf e_k\rangle_{\mathfrak U}\cdot \langle\Phi_2,\f e_k\rangle_{\mathfrak{U}} \de x \,.
$$
We define the Skorokhod space $\mathfrak{D}([0, T])$ as the space of the real-valued càdlàg functions defined on $[0, T]$. More precisely, $\Phi$ belongs to the space $\mathfrak{D}([0, T])$ if it is right-continuous and has left-hand limits:
\begin{enumerate}[label=\roman*)]
    \item for $0<t \leq T, \Phi(t-)=\lim _{s \uparrow t} \Phi(s)$ exists,
    \item for $0 \leq t<T, \Phi(t+)=\lim _{s \downarrow t} \Phi(s)$ exists and $\Phi(t+)=\Phi(t)$.
\end{enumerate}
It is well known that càdlàg functions are bounded, measurable, and have at most countably many discontinuities. 
\subsection{Preliminary lemmata}
\begin{lemma}\label{lem:invar}
Let $f\in L^p(\Omega;L^1(0,T))$, $g\in L^p_{w^*}(\Omega;L^\infty(0,T )) $,  $g_0\in L^p(\Omega)$ for $p\geq 1$, and let $h\in L^2(\Omega, L^2(0,T;L_2(\mathfrak{U};\R))$ be $(\mathcal{F}_t)$-progressively measurable. 
Then the following two statements are equivalent:
\begin{enumerate}[label=\roman*)]
\item
The inequality 
\begin{multline}
-\expect{\psi(\omega)\int_0^T\partial_t\phi(\tau) g(\tau,\omega) \de \tau}  \\+\expect{\psi(\omega)\left( \int_0^T \phi(\tau) f(\tau,\omega) \de \tau+\int_0^T \phi (\tau) h(\tau) \de W(\tau) - \phi(0)g_0 (\omega)\right)}\leq 0 
\label{ineq1}
\end{multline}
holds for all $\phi \in{\C}^1_c ([0,T))$ with $\phi \geq 0$ and $\psi \in L^\infty(\Omega)$ with $\psi \geq 0$, $\mathbb P$-almost surely.
\item
The inequality
\begin{equation}
    g(t,\omega) -g(s,\omega) + \int_s^t f(\tau,\omega) \de \tau + \int_s^t h(\tau  ) \de W(\tau)\leq 0 
    \label{ineq2}
\end{equation}
holds for a.e.~$s< t\in (0,T)$ and $\mathbb{P}$-almost surely,
including $s=0$ if we replace $g(0)$ with $g_0$.
\end{enumerate}

If one of these conditions is satisfied, 
then one representative of $g$ can be identified with a function in $\mathfrak{D}([0,T])$  such that
\begin{equation}
    g(t) -g(s-) + \int_s^t f(\tau) \de \tau + \int_s^t h(\tau ) \de W(\tau)\leq 0 \,
    \label{ineq.pw}
\end{equation}
for all $s\leq t\in[0,\infty)$,
where we set $g(0-)\coloneqq g_0$ and $\mathbb{P}$-almost surely.
In particular, it holds $g(0)\leq g_0$ and $g(t)\leq \lim_{s\nearrow t}g(t)$ for all $t\in (0,T)$ $\mathbb{P}$-almost surely.
\end{lemma}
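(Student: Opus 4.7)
The plan is to unify~\eqref{ineq1} and~\eqref{ineq2} by passing to the cumulative scalar process
\begin{equation*}
G(t,\omega) := g(t,\omega) + \int_0^t f(\tau,\omega)\,\mathrm{d}\tau + \int_0^t h(\tau)\,\mathrm{d} W(\tau),
\end{equation*}
with the stochastic integral in its $\mathbb{P}$-a.s.~continuous modification, so that $F := G - g$ is $\mathbb{P}$-a.s.~continuous with $F(0) = 0$. A pathwise Riemann--Stieltjes integration by parts against $\phi \in \mathcal{C}^1_c([0,T))$, exploiting $\phi(T) = 0$, gives $-\int_0^T \partial_t\phi\, F\,\mathrm{d}\tau = \int_0^T \phi f\,\mathrm{d}\tau + \int_0^T \phi h\,\mathrm{d} W$, so the bracketed expression in~\eqref{ineq1} collapses to $-\int_0^T \partial_t\phi(\tau)\, G(\tau)\,\mathrm{d}\tau - \phi(0) g_0$. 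Because $\psi \in L^\infty(\Omega)$ ranges over all nonnegative functions, the equivalence reduces to the deterministic statement that, for $\mathbb{P}$-a.e.~$\omega$, a function $G(\cdot,\omega)$ satisfies $-\int_0^T \partial_t\phi\, G\,\mathrm{d}\tau \leq \phi(0) g_0(\omega)$ for all nonnegative $\phi \in \mathcal{C}^1_c([0,T))$ if and only if it is essentially decreasing on $[0,T]$ with boundary value $g_0(\omega)$ at $0-$.

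For the direction (ii) $\Rightarrow$ (i), I would fix $\omega$ outside a null set and read~\eqref{ineq2} as the assertion that $G(\cdot,\omega)$ is decreasing on a full-measure set $T_0(\omega) \subset [0,T]$, with $G(t,\omega) \leq g_0(\omega)$ for a.e.~$t$ (from the $s=0$ case). A right-continuous decreasing modification $\tilde G(\cdot,\omega)$ is then produced pathwise, e.g.~as the essential right-limit, with convention $\tilde G(0-) := g_0$; its Stieltjes measure $\mathrm{d}\tilde G$ is nonpositive. Integration by parts in the Stieltjes sense then yields, for any nonnegative $\phi \in \mathcal{C}^1_c([0,T))$,
\begin{equation*}
-\int_0^T \partial_t\phi(\tau)\,\tilde G(\tau)\,\mathrm{d}\tau - \phi(0)\, g_0 \;=\; \int_{[0,T]}\phi(\tau)\,\mathrm{d}\tilde G(\tau) \;\leq\; 0,
\end{equation*}
and since $\tilde G = G$ Lebesgue-a.e.~in $\tau$ one replaces $\tilde G$ by $G$ in the left-hand integral, multiplies by $\psi \geq 0$, and takes expectation to recover~\eqref{ineq1}.

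For the reverse (i) $\Rightarrow$ (ii), I would test~\eqref{ineq1} against mollified indicators: for $0 \leq s < t < T$ and $\epsilon > 0$ small, pick $\phi_\epsilon \in \mathcal{C}^1_c([0,T))$ nonnegative with $\phi_\epsilon = 1$ on $[s,t]$, supported in $[s-\epsilon, t+\epsilon]$ (respectively $[0, t+\epsilon]$ with $\phi_\epsilon(0) = 1$ when $s = 0$), and $-\partial_t\phi_\epsilon \rightharpoonup \delta_t - \delta_s$. Sending $\epsilon \to 0$, Lebesgue differentiation applied pathwise to $g(\cdot,\omega)$, together with the continuity of $t \mapsto \int_0^t f\,\mathrm{d}\tau$ and $t \mapsto \int_0^t h\,\mathrm{d} W$, identifies the limit of the left-hand side of~\eqref{ineq1} as
\begin{equation*}
\mathbb{E}\Bigl[\psi\Bigl(g(t) - g(s) + \int_s^t f\,\mathrm{d}\tau + \int_s^t h\,\mathrm{d} W\Bigr)\Bigr] \leq 0
\end{equation*}
for a.e.~$s < t$, with the case $s = 0$ producing the replacement $g(0) \mapsto g_0$ via the $-\phi(0) g_0$ term. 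Choosing $\psi$ to be the indicator of the event that the bracketed integrand is strictly positive then forces~\eqref{ineq2}, $\mathbb{P}$-a.s.

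Finally, setting $\tilde g(t) := \tilde G(t) - F(t)$ yields a càdlàg representative in $\mathfrak{D}([0,T])$ coinciding with $g$ Lebesgue-a.e., and~\eqref{ineq.pw} reduces to $\tilde G(t) \leq \tilde G(s-)$ for $s \leq t$ (with $\tilde G(0-) := g_0$), which is immediate from monotonicity; the bounds $\tilde g(0) \leq g_0$ and $\tilde g(t) \leq \lim_{s \uparrow t} \tilde g(s)$ are just $\tilde G(0) \leq \tilde G(0-)$ and the left-limit behaviour of the monotone $\tilde G$. The step I expect to require most care is measurability: the exceptional set $T_0(\omega)$ depends on $\omega$, so $\tilde G$ must be shown jointly measurable and $(\mathcal{F}_t)$-adapted. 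This is handled by defining $\tilde G(t,\omega)$ through essential suprema along rational right-neighbourhoods of $t$, whose measurability is inherited from $G$, and then invoking the right-continuity of $(\mathcal{F}_t)_{t \geq 0}$ asserted in the preliminaries to conclude adaptedness at each $t$.
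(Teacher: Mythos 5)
Your proof is correct and rests on the same core ingredients as the paper's: the fundamental lemma of variational calculus to strip off $\psi$ and reduce to a pathwise statement, the classical equivalence between distributional and pointwise monotonicity on $[0,T)$, and the splitting of $g$ into a monotone (hence BV) part plus the continuous integral terms to produce the c\`adl\`ag representative. The one place you genuinely diverge is the direction (ii)$\Rightarrow$(i): the paper works directly with $g$ on an equidistant partition, performs a discrete Abel summation against $\phi(t_{n-1})$, and passes to the limit via piecewise-linear and piecewise-constant prolongations, whereas you first build a right-continuous decreasing modification $\tilde G$ of the cumulative process $G=g+\int_0^\cdot f\,\de\tau+\int_0^\cdot h\,\de W$ and then apply Stieltjes integration by parts, using that $\de\tilde G\le 0$. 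Your route buys a cleaner organizing principle (everything is a statement about monotonicity of a single scalar process, and the c\`adl\`ag representative falls out for free as $\tilde G-F$), at the price of having to justify the pathwise construction of $\tilde G$ and its joint measurability/adaptedness -- a point you correctly flag and resolve via essential limits over rational neighbourhoods; the paper's discrete summation avoids constructing any modification before the limit passage but requires the bookkeeping with prolongations. Both arguments gloss over the same routine technicalities (the quantifier exchange ``for all $\phi$, a.s.'' versus ``a.s., for all $\phi$'' via separability, and the Fubini argument locating the full-measure set of admissible $(s,t)$), so your proposal is at the same level of rigor as the original.
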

\begin{proof}
    A proof of this assertion in the deterministic setting can be found in~\cite[Lem.~1]{EnvarIncomp}. 
We prove the assertion here for the reader's convenience. 
We first prove the direction $i\Rightarrow ii$
Applying Lemma~\ref{lem:varicalc}, we observe that
\begin{equation*}
-\int_0^T\partial_t\varphi(\tau) g(\tau,\omega) \de \tau  + \int_0^T \varphi(\tau) f(\tau) \de \tau+\int_0^T \varphi (\tau) h(\tau) \de W - \varphi(0)g_0 (\omega)\leq 0 
\end{equation*}
holds for all $\varphi \in {\C}^1_c ([0,T))$ with $\varphi \geq 0$ $\mathbb{P}$-almost surely.
Now we can approximate the indicator function $ \chi_{[s,t]}$ by an approximate sequence $ \varphi_\varepsilon \to \chi_{[s,t]}$ for all $ s<t$. Standard arguments give rise to the inequality~\eqref{ineq2}. 

In order to prove the reverse direction, we use an equidistant partition of the domain $[0,T]$ for any $N\in \N $ via $ \tau = T/N$ and $t_n = n \tau$  for $n\in \{ 0,\ldots , N\}$. For $\phi $
 as above, we sum over~\eqref{ineq2} with $t=t_n$ and $s=t_{n-1}$  multiplied by $ \phi(t_{n-1} ) \psi(\omega)$ for any $\omega
 \in \Omega$ from $1$ to $N$. This gives rise to the inequality
\begin{equation*}
    \begin{aligned}
       \psi(\omega) \sum_{n=1}^N \phi(t_{n-1})\left[ g(t_n,\omega) -g(t_{n-1},\omega) + \int_{t_{n-1}}^{t_n} f(r,\omega) \de r + \int_{t_{n-1}}^{t_n} h(r,\omega ) \de W (r)\right] \leq 0 \,.
    \end{aligned}
\end{equation*}
for all $\phi \in L^{\infty}(\Omega,\mathcal{F};{\C}^1_c ([0,T)))$ with $\phi \geq 0$ and $\mathbb P$-almost surely. 
Applying a discrete integration-by-parts in time on the first term
\begin{align*}
    \sum_{n=1}^N \phi(t_{n-1}) [g(t_n,\omega) -g(t_{n-1},\omega)] + \phi(t_0)g_0(\omega)={}& - \sum_{n=1}^N g(t_n,\omega) [ \phi(t_{n-1}) - \phi(t_{n-1})] \\={}& -\int_0^T \partial _t \hat{\phi}(\tau) \overline{g}(\tau, \omega )  \de \tau 
\end{align*} 
and taking expectations leads to
\begin{multline*}
        \expect{- \psi(\omega)\left(\int_0^T \partial _t \hat{\phi}(\tau) \overline{g}(\tau, \omega )  \de \tau - \phi(0)g_0(\omega)\right)} \\+\expect{\psi(\omega)\left( \int_{0}^{T} \underline{\phi}(\tau  ) f(\tau,\omega) \de \tau + \int_{0}^{T}\underline{\phi}(\tau  ) h(\tau,\omega ) \de W(\tau)\right)} \leq 0 \,.
\end{multline*}
with the usual notation for the linear prolongations
$$
\hat{\phi}(t):= \begin{cases}
    \phi(t_{n-1}) + (t-t_{n-1} )\frac{\phi(t_n)-\phi(t_{n-1})}{\tau} &\text{if } t \in (t_{n-1}, t_n]\\
    \phi(0) &\text{if } t  = 0
\end{cases}
$$
as well as constant prolongations
$$
\overline{g}(r, \omega ) := \begin{cases}
    g(t_{n},\omega) & \text{if } r \in (t_{n-1}, t_{n}] \\
    g_0(\omega)&\text{if } r =0 \,,
\end{cases}\,,\quad
    \underline{\phi}(r) : = \begin{cases}
        \phi(t_{n-1})& \text{if } r \in [t_{n-1}, t_{n}) \\
   \phi(T)& \text{else} \,.
    \end{cases}
$$
 These prolongation converge strongly to their respective limit such that we end up with~\eqref{ineq1}.

Furthermore, from the inequality~\eqref{ineq2}, we find that the left-hand side $ g(\cdot) -g_0 + \int_0^\cdot f(\tau) \de \tau + \int_0^\cdot h(\tau ) \de W$ is a monotonically decreasing function in time and as such of bounded variation. As both integrals are continuous, the function $g$ can be viewed as the sum of a function of bounded variation and a continuous function such that it has at most countably many jumps and we can choose a representative in~$\mathfrak{D}([0,T])$, which in turn implies~\eqref{ineq.pw}. 
 \end{proof}

\begin{lemma}[Fundamental lemma of variational calculus]\label{lem:varicalc}
    Let $f\in L^1(\Omega)$. Then the following two statements are equivalent:
    \begin{enumerate}[label=\roman*)]
\item the inequality 
$ \expect{\psi f }\geq 0$
holds for all $\psi \in L^\infty(\Omega)$ with $\psi \geq 0$.
\item 
the inequality $ f \geq 0 $ holds $\mathbb{P}$-almost surely. 
\end{enumerate}
\end{lemma}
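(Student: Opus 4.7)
The plan is to treat the two implications separately. The direction (ii)$\Rightarrow$(i) is immediate from the monotonicity of the expectation: if $f \geq 0$ $\mathbb{P}$-almost surely and $\psi \in L^\infty(\Omega)$ with $\psi \geq 0$, then the product $\psi f$ is $\mathbb{P}$-almost surely nonnegative and integrable, so $\mathbb{E}[\psi f]\geq 0$.

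For the nontrivial direction (i)$\Rightarrow$(ii), I would argue by contradiction. Suppose $\mathbb{P}(f<0)>0$ and decompose the negativity set as $\{f<0\}=\bigcup_{n\in\mathbb{N}} A_n$ with the measurable sublevel sets $A_n:=\{f\leq -1/n\}$. By countable subadditivity, there exists some $n\in\mathbb{N}$ with $\mathbb{P}(A_n)>0$. I would then choose the test function $\psi:=\mathbf{1}_{A_n}$, which is nonnegative and bounded by $1$, hence belongs to $L^\infty(\Omega)$. Plugging this into (i) yields
\begin{equation*}
0 \leq \mathbb{E}[\psi f] = \mathbb{E}[f\,\mathbf{1}_{A_n}] \leq -\frac{1}{n}\mathbb{P}(A_n) < 0,
\end{equation*}
which is the desired contradiction and forces $\mathbb{P}(f\geq 0)=1$.

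There is no real obstacle here; the only point requiring minimal care is the measurability of $\mathbf{1}_{A_n}$, which follows from $f$ being $\mathcal{F}$-measurable so that each sublevel set $A_n\in\mathcal{F}$. The whole argument is classical and works on any measure space, the probabilistic structure of $(\Omega,\mathcal{F},\mathbb{P})$ playing no special role beyond $\mathbb{P}$ being finite (so that $\mathbf{1}_{A_n}\in L^\infty$ always serves as an admissible test function).
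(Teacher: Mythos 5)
Your proof is correct; the paper itself states this lemma without proof, treating it as standard, and your argument (trivial direction by monotonicity of the expectation, converse by testing against the indicator of a sublevel set $\{f\le -1/n\}$ of positive measure) is exactly the classical one that would be expected. No gaps.
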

The fundamental theorem for Young measures is a standard tool in modern Analysis. Usually, it is only considered on an underlying space in $\R^n$ equipped with the Lebesgue measure. But most of the results can be generalized to more general measure spaces. We refer to~\cite{Berliocchi,Warga,Valadier} for such results.  
\begin{theorem}[Fundamental theorem for Young measures on $\Omega\times (0,T)\times D$]\label{thm:Young}
Let $(\Omega,\mathcal{F},(\mathcal{F}_t)_{t\geq 0} ,\mathbb P)$ be a filtered probability space and $\D\subset\mathbb R^d$ a bounded domain. Equip the product with the product measure $\mu:=\mathbb P\otimes \mathcal L^1 \otimes \mathcal L^d$ and the product $\sigma$-algebra $\mathcal F\otimes\mathcal B((0,T)\times \D)$, where $\mathcal L^d$ denotes the Lebesgue measure in dimension $d$. 

Let $(u_n)_{n\in\mathbb N}$ be a sequence of $(\mathcal{F}_t)$-progressively-measurable maps
\[
u_n:\Omega\times(0,T)\times  \D\to\mathbb R^m
\]
which is bounded in $L^p(\Omega\times (0,T)\times D;\mathbb R^m)$ for some $1< p<\infty$, i.e.
\[
\sup_{n}\int_\Omega\int_0^T\int_{\D} |u_n(\omega,t,x)|^p \,\de x \de t \de \mathbb P(\omega) <\infty.
\]

Then, there exist a subsequence (not relabeled) and a family of probability measures
\[
\nu_{(\omega,t,x)}\in\mathcal P(\mathbb R^m)\quad\text{for }\mu\text{-a.e. }(\omega,t,x)\in\Omega\times(0,T)\times  D,
\]
such that:
\begin{enumerate}
  \item[i)] for every Borel set $B\subset \mathbb R^m$, and bounded  measurable $\phi: \D\to\mathbb R$, the map $\langle \nu_{(\omega,t,x)}(B) ,\phi \rangle$ is $(\mathcal{F}_t)$-progressively measurable, that is, for all $t_0\in [0,T]$, the map $(\omega,t)\mapsto \langle \nu_{(\omega,t,x)}(B) ,\phi \rangle$ is $\mathcal F_{t_0}\otimes\mathcal B([0,t_0])$ measurable. 
  \item[ii)] for every Carath\'eodory integrand $f:(\Omega\times(0,T) \times  \D)\times\mathbb R^m\to\mathbb R$ with $q$-growth such that $1\leq q < p$, the sequence $f(\cdot,u_n(\cdot))$ converges weakly in $L^{\frac{p}{q}}(\Omega\times(0,T)\times  D)$ to the parametrized average:
  \[
  f(\omega,t,x,u_n(\omega,t,x)) \ \rightharpoonup \ \langle \nu_{(\omega,t,x)},\, f(\omega,t,x,\cdot)\rangle
  \qquad\text{weakly in }L^{\frac{p}{q}}(\Omega\times(0,T)\times  \D).
  \]
  Equivalently, for every bounded measurable $\phi:\Omega\times (0,T)\times \D\to\mathbb R$,
  \begin{multline*}
     \lim_{n\to \infty} \int_\Omega\int_0^T \int_{\D} \phi(\omega,t,x)\, f(\omega,t,x,u_n(\omega,t,x))\,\de x\de t\de \mathbb P(\omega)\\
  =
  \int_\Omega\int_0^T\int_{\D} \phi(\omega,t,x)\Big(\int_{\mathbb R^m} f(\omega,t,x,z)\,d\nu_{(\omega,t,x)}(z)\Big)\de x\de t \de\mathbb P(\omega).
  \end{multline*}
  \end{enumerate}
\end{theorem}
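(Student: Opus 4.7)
The plan is to combine the classical fundamental theorem for Young measures on $\sigma$-finite measure spaces with a refinement that tracks the filtration structure. Equipping $\Omega\times(0,T)\times D$ with the product $\sigma$-algebra $\mathcal F\otimes\mathcal B((0,T)\times D)$ and the $\sigma$-finite measure $\mu$, and using the uniform $L^p$-bound on $(u_n)$, I would first invoke the classical theorem (as in the references cited above) to extract a subsequence (not relabelled) and a family $\nu_{(\omega,t,x)}\in\mathcal P(\R^m)$ such that $(\omega,t,x)\mapsto \nu_{(\omega,t,x)}(B)$ is $\mathcal F\otimes\mathcal B((0,T)\times D)$-measurable for every Borel $B\subset \R^m$, and such that property (ii) holds for \emph{bounded} Carathéodory integrands. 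The extension to $q$-growth with $1\leq q<p$ then follows from the usual truncation/equi-integrability argument: split $f(\cdot,u_n(\cdot))$ via $\chi_{\{|u_n|\leq M\}}$ and $\chi_{\{|u_n|>M\}}$, handle the first piece by the bounded case, and control the second in $L^{p/q}$ using the uniform $L^p$-estimate and letting $M\to\infty$.

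The genuinely new ingredient compared to the classical statement is the \emph{progressive} measurability in (i), and this is where the main technical work lies. To obtain it, I would re-examine the construction through a countable, uniformly dense family $\{f_k\}_{k\in\N}\subset C_0(\R^m)$. Each bounded composition $f_k\circ u_n$ is $(\mathcal{F}_t)$-progressively measurable because $u_n$ is, and by weak-$*$ compactness in $L^\infty(\Omega\times(0,T)\times D)$ combined with a diagonal extraction, a subsequence can be chosen so that $f_k\circ u_n\rightharpoonup^* g_k$ for every $k$. The standard identification then yields $g_k(\omega,t,x)=\int_{\R^m}f_k\,\de\nu_{(\omega,t,x)}$ for $\mu$-a.e.\ $(\omega,t,x)$. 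The crucial observation is that the subspace of progressively measurable essentially bounded functions is weak-$*$ closed in $L^\infty(\Omega\times(0,T)\times D)$: thanks to the right-continuity of $(\mathcal{F}_t)$, progressive measurability can be tested on a countable dense set of times $t_0$, and for each such $t_0$ the conditional-expectation projection onto $L^\infty(\Omega\times[0,t_0]\times D,\mathcal F_{t_0}\otimes\mathcal B([0,t_0])\otimes\mathcal B(D))$ is weak-$*$ continuous (its $L^1$-predual is again a conditional expectation). Hence each $g_k$ is progressively measurable, and a monotone-class argument based on the density of $\{f_k\}$ transfers this property to $(\omega,t,x)\mapsto\nu_{(\omega,t,x)}(B)$ for every Borel $B$. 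The pairing $\langle\nu_{(\omega,t,x)}(B),\phi\rangle=\int_D\phi(x)\nu_{(\omega,t,x)}(B)\,\de x$ then inherits progressive measurability as a function of $(\omega,t)$ by Fubini applied to $\mathbb P\otimes\mathcal L^1|_{[0,t_0]}\otimes\mathcal L^d$.

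The main obstacle is precisely this preservation of progressive measurability in the weak-$*$ limit, since the classical Young measure theorem is usually phrased only with joint measurability in the full product $\sigma$-algebra. An alternative route would apply the classical theorem separately on each restricted space $(\Omega\times[0,t_0]\times D,\mathcal F_{t_0}\otimes\mathcal B([0,t_0])\otimes\mathcal B(D))$ and then invoke uniqueness of the generated Young measure (up to $\mu$-null sets) to identify these restrictions with those of the global $\nu$; this however requires coordinating the diagonal subsequence extraction across $t_0\in[0,T]\cap\mathbb Q$, which the weak-$*$ closure argument sketched above performs in a single step.
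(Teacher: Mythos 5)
Your proposal is correct and follows essentially the same route as the paper: existence of the Young measure and property (ii) are imported from the classical theory on general ($\sigma$-finite) measure spaces, and the progressive measurability in (i) is obtained by observing that the class of progressively measurable functions is weakly (resp.\ weak-$*$) closed, so that weak limits of the progressively measurable compositions $f_k\circ u_n$ remain progressively measurable. The only cosmetic difference is that the paper cites the fact that this class is a closed convex subset of $L^p(\Omega\times(0,T))$ for $1<p<\infty$ and concludes via Mazur, whereas you establish weak-$*$ closedness in $L^\infty$ directly through the conditional-expectation projections onto $\mathcal F_{t_0}\otimes\mathcal B([0,t_0])$.
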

\begin{proof}
The  result $ii)$ follows for instance from the general result in~\cite[Prop.~2.4.1, 3)$\Rightarrow$1)]{Valadier}, the special case of $q=1$ is dealt with in~\cite[Lem.~6.2.1]{Valadier}. The  tightness follows from Theorem~\cite[Thm.~6.2.5]{Valadier}, see also~\cite[Rem.~6.2.2]{Valadier}. 
    As the approximating sequence is $(\mathcal{F}_t)$-progressively measurable and progressively measurable functions are a closed convex set in $L^p(\Omega \times (0,T))$ for any $p\in (1,\infty)$~\cite{Predict}, we may infer from the weak convergence  that also the limit is $(\mathcal{F}_t)$-progressively measurable.    
\end{proof}
\begin{lemma}\label{lem:weakcont}
Let $\mathbb X$ be a Hilbert space and $\mathbb Y$ be a   Banach space such that $\mathbb Y$ embeds densely into $\mathbb X$, \textit{i.e.,} $ \mathbb Y \hookrightarrow^d \mathbb X$. For any 
     $\f u \in L^p_{w^*}(\Omega; L^\infty(0,T;\mathbb X)) \cap L^q(\Omega; W^{\alpha,r}(0,T;\mathbb Y ^*))$ with $p$, $q$, $r\geq 1$ such that $\alpha r>1$ it holds that  $\f u (\omega) \in \C_w([0,T];\mathbb X)$ $\mathbb P$-almost surely. 
\end{lemma}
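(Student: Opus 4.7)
My strategy is to combine the fractional Sobolev embedding $W^{\alpha,r}(0,T;\mathbb{Y}^*)\hookrightarrow \mathcal{C}([0,T];\mathbb{Y}^*)$, which holds whenever $\alpha r>1$, with the uniform $\mathbb{X}$-bound, and then upgrade strong continuity in $\mathbb{Y}^*$ to weak continuity in $\mathbb{X}$ by density. The dense embedding $\mathbb{Y}\hookrightarrow \mathbb{X}$, together with the Riesz identification $\mathbb{X}\cong \mathbb{X}^*$, yields a Gelfand triple $\mathbb{Y}\hookrightarrow \mathbb{X}\cong \mathbb{X}^*\hookrightarrow \mathbb{Y}^*$, in which all dualities can be read consistently, and in which the transpose of the dense inclusion provides the continuous embedding $\mathbb{X}\hookrightarrow \mathbb{Y}^*$ that I will rely on.

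First, by Fubini, for $\mathbb{P}$-a.e.\ $\omega\in \Omega$ the trajectory $t\mapsto \f u(\omega,t)$ lies in $L^\infty(0,T;\mathbb{X})\cap W^{\alpha,r}(0,T;\mathbb{Y}^*)$. Fix such an $\omega$. The Morrey-type embedding for fractional Sobolev functions on an interval gives a unique representative in $\mathcal{C}([0,T];\mathbb{Y}^*)$, which I identify with $\f u(\omega,\cdot)$ henceforth. Next, I would promote the essential $\mathbb{X}$-bound to a pointwise bound: set $C(\omega):=\mathrm{ess\,sup}_{t\in(0,T)}\|\f u(\omega,t)\|_{\mathbb{X}}<\infty$, and for arbitrary $t\in[0,T]$ choose a sequence $t_n\to t$ of Lebesgue points with $\|\f u(\omega,t_n)\|_{\mathbb{X}}\leq C(\omega)$. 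Since $\mathbb{X}$ is Hilbert and therefore bounded sets are weakly sequentially compact, a subsequence converges weakly in $\mathbb{X}$ to some $w$ with $\|w\|_{\mathbb{X}}\leq C(\omega)$. The continuous embedding $\mathbb{X}\hookrightarrow \mathbb{Y}^*$ transfers this to weak convergence in $\mathbb{Y}^*$, and together with the $\mathbb{Y}^*$-continuity of $\f u(\omega,\cdot)$ and uniqueness of limits this forces $w=\f u(\omega,t)$; hence $\|\f u(\omega,t)\|_{\mathbb{X}}\leq C(\omega)$ for every $t\in[0,T]$.

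Finally, to obtain weak continuity in $\mathbb{X}$, I would fix $w\in \mathbb{X}$ and $\varepsilon>0$, pick $v\in \mathbb{Y}$ with $\|w-v\|_{\mathbb{X}}<\varepsilon/(4C(\omega)+1)$, and write
\[
|\langle \f u(\omega,t)-\f u(\omega,s),w\rangle_{\mathbb{X}}|\leq |\langle \f u(\omega,t)-\f u(\omega,s),v\rangle_{\mathbb{Y}^*,\mathbb{Y}}|+2C(\omega)\,\|w-v\|_{\mathbb{X}}.
\]
The first term tends to zero as $|t-s|\to 0$ by the established continuity in $\mathbb{Y}^*$, while the second is bounded by $\varepsilon/2$; this proves $\f u(\omega,\cdot)\in \mathcal{C}_w([0,T];\mathbb{X})$ for $\mathbb{P}$-a.e.\ $\omega$. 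The main technical subtlety is the step that promotes the essential-supremum bound to a bound valid at every $t\in[0,T]$, which is precisely where Hilbertness of $\mathbb{X}$ (weak sequential compactness of bounded sets) enters; no measurability-in-$\omega$ complication arises, as the statement is purely pathwise.
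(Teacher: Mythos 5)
Your proposal is correct and follows essentially the same route as the paper's proof: the fractional Sobolev embedding $W^{\alpha,r}(0,T;\mathbb{Y}^*)\hookrightarrow \mathcal{C}([0,T];\mathbb{Y}^*)$ for $\alpha r>1$, followed by a density argument splitting $\langle \f u(t)-\f u(s),w\rangle_{\mathbb{X}}$ into a $\mathbb{Y}$--$\mathbb{Y}^*$ pairing and an $\mathbb{X}$-remainder. Your intermediate step promoting the essential supremum bound to a bound at \emph{every} $t\in[0,T]$ (via weak sequential compactness of bounded sets in the Hilbert space $\mathbb{X}$ and uniqueness of limits in $\mathbb{Y}^*$) is a welcome addition: the paper uses $\|\f u(\omega)\|_{L^\infty(0,T;\mathbb{X})}$ as a pointwise bound at arbitrary $t,s$ without justifying that $\f u(\omega,t)$ even lies in $\mathbb{X}$ for every $t$, so your argument closes a small gap left implicit there.
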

\begin{proof}
First, we observe that  $ L^q(\Omega;W^{\alpha,r}(0,T;\mathbb Y^* )) \hookrightarrow L^q(\Omega;C([0,T];\mathbb Y^*))$ as $\alpha r>1$. Let now $\omega \in \Omega$ be such that $u(\omega)\in L^\infty(0,T;\mathbb X)) \cap C([0,T];\mathbb Y^*)$. Let $\varphi \in \mathbb X$. We aim to show that $t \mapsto \langle u(\omega,t),\varphi \rangle $ is continuous. For any given $\varepsilon >0 $, by density we may choose an element $ \varphi_\varepsilon\in\mathbb Y$ such that $ \| \varphi_\varepsilon - \varphi \|_{\mathbb X} < \frac{\varepsilon}{4\| u(\omega) \| _{L^\infty(0,T;\mathbb X)} }$. Now, we may choose
$ \delta >0 $ such that for all $|s-t|<\delta $ it holds $ \| u(s)-u(t)\|_{\mathbb Y^*} < \frac{\varepsilon}{2 \| \varphi_\varepsilon\|_{\mathbb Y}}$ due to the continuity of $u(\omega)$ in $\mathbb Y^*$.
For any $s$, $t\in [0,T]$ with $|s-t|<\delta$ it now holds 
$$\begin{aligned}
    |\langle \varphi ,  u(t)-u(s)\rangle _{\mathbb X}| \leq {}&
| \langle \varphi_\varepsilon , u(t)-u(s)\rangle _{\mathbb Y}| + | \langle \varphi_\varepsilon -\varphi , u(t)-u(s)\rangle _{\mathbb X}| 
\\
\leq {}& \| \varphi_\varepsilon \|_{\mathbb Y} \| u(t)-u(s)\|_{\mathbb Y^*}+ \| \varphi_\varepsilon-\varphi\|_{\mathbb X} 2 \| u \| _{L^\infty(0,T;\mathbb X)} < \varepsilon\,.
\end{aligned}
$$
\end{proof}

\section{Main results for the stochastic incompressible Navier--Stokes equations\label{sec:3}}

At first, we expand on our observation that in case of the Navier--Stokes equations the usual  analytically and probabilistically weak solutions are not the only possible relevant concept for these equations. One can allow for probabilistically strong solutions 
 by generalizing the solution concept. 
Therefore, we introduce not only analytically and probabilistically weak solutions but also probabilistically strong analytically measure-valued solutions and establish how both solutions are connected. 

\begin{assumption}\label{Ass:1}
      Let $\mathcal{D}$ be a bounded  domain of class $\C^2$ and $\left(\Omega, \mathcal{F},\left(\mathcal{F}_t\right)_{t \geq 0}, \mathbb{P}\right)$ be a complete stochastic basis with a probability measure $\mathbb{P}$ and a right-continuous and complete filtration $\left(\mathcal{F}_t\right)$.
      Let $\mathfrak{U}$ be a separable Hilbert space and let $\left(\mathbf{e}_k\right)_{k \in \mathbb{N}}$ be an orthonormal basis of $\mathfrak{U}$ and let $\sigma^1 \in L_2\left(\mathfrak{U} ; L^2_\sigma \left(\D\right)\right)$ as well as $\sigma^2\in L_2\left(\mathfrak{U} ;(H^2(\D) \cap H^1_{0,\sigma}(\D))\right)$. 
      Finally, let  $W=\left(W_t\right)_{t \geq 0}$ be a cylindrical Wiener process
which has the form
$$
W(t)=\sum_{k \in \mathbb{N}} \beta _k(t) \mathbf{e}_k \,,
 $$
with the sequence $\left(\beta_k\right)$ 
of independent real valued Brownian motions on $\left(\Omega, \mathcal{F},\left(\mathcal{F}_t\right)_{t \geq 0}, \mathbb{P}\right)$.
\end{assumption}

\begin{definition}[Weak-weak solutions to stochastic Navier--Stokes equations] \label{def:weakweakNav}
Let $\f u_0 \in L^2(\Omega, \mathcal{F}_0;L^2_{\sigma}(\D))$. 
The tuple consisting of a new stochastic basis and an associated process
$$
\left(\left(\tilde\Omega, \tilde{\mathcal{F}},\left(\tilde{\mathcal{F}}_t\right), \tilde{\mathbb{P}}\right), \tilde {\f{u}}, \tilde W\right)
$$
is called a finite energy probabilistically weak, weak solution, or in short weak-weak solution, to~\eqref{eq:stochastic_euler_additive}  with the initial data $\f{u}_0$ and $\nu>0$ provided $\tilde{\f u}$ is a weak solution on the new 
stochastic basis, \textit{i.e.,}
\begin{enumerate}[label=\roman*)]
    \item $\left(\tilde\Omega, \tilde{\mathcal{F}},\left(\tilde{\mathcal{F}}_t\right), \tilde{\mathbb{P}}\right)$ is a stochastic basis with a complete right-continuous filtration;
    \item $\tilde W$ is an $\left(\tilde{\mathcal{F}}_t\right)$-cylindrical Wiener process;
    \item  \label{reg:weakweak} The velocity field $\tilde{\f{u}}$ is $\left(\tilde{\mathcal{F}}_t\right)$-adapted and satisfies $\tilde{\mathbb{P}}$-almost surely 
$$
\tilde{\f{u}} \in  
L^2\left (\Omega;L^\infty\left(0,T ; L_{\sigma}^2\left(\D \right)\right) \cap L^2\left(0, T ; W_{\sigma}^{1,2}\left(\D \right)\right) \right)
$$
such that $\tilde{\f u} \in \C_w([0,T];L^2_\sigma(\D))$ $\tilde{\mathbb P}$-almost surely;
\item $\mathbb{P}\circ (\f{u}_0)^{-1}=\tilde{\mathbb{P}} \circ(\tilde{\f{u}}(0))^{-1}$;
\item For all $\f\varphi \in C_{\sigma}^{\infty}\left(\D \right)$ and all $t \geq 0$ there holds $\mathbb{P}$-almost surely
$$
\begin{aligned}
\int_{\D } \tilde{\f{u}}(t) \cdot \boldsymbol{\varphi} \mathrm{d} x& =\int_{\D } \tilde{\f{u}}(0) \cdot \boldsymbol{\varphi} \mathrm{d} x+\int_0^t \int_{\D } \tilde{\f{u}} \otimes \tilde{\f{u}}: \nabla \boldsymbol{\varphi} + \frac{1}{2}[ \mathcal P(\sigma^2\cdot \nabla)]^2 \f \varphi \cdot \tilde{\f u}\mathrm{d} x \mathrm{~d} s \\
& \quad -\nu \int_0^t \int_{\D } \nabla \tilde{\f{u}}: \nabla \boldsymbol{\varphi} \mathrm{d} x \mathrm{~d} s\\ & \quad +\int_0^t \int_{\D } \boldsymbol{\varphi} \cdot \sigma ^1 + 
(\sigma^2 \cdot \nabla ) \f \varphi \cdot \tilde{\f u} \mathrm{~d} x \mathrm{~d} W(s)
\end{aligned}
$$
\item The energy inequality holds in the sense that
\begin{multline}
\frac{1}{2}\int_{\D}|\tilde{\f{u}}(t)|^2\de x +\nu\int_s^t  \int_{\D }|\nabla \tilde{\f{u}}|^2 \mathrm{~d}  x \mathrm{~d} \tau \\
\leq \frac{1}{2}\int_{\D}|\tilde{\f{u}}(s)|^2\de x+\frac{1}{2} \int_s^t\|\sigma^1 \|_{L_2\left(\left(\mathfrak{U}, L^2\left(\D \right)\right)\right)}^2 \mathrm{~d} \tau +\int_s^t \int_{\D } \tilde{\f{u}} \cdot \sigma^1 \mathrm{~d} W(\tau) 
\end{multline}
$\mathbb{P}$-almost surely~for almost all~$s \geq 0$ (including $s=0$ ) and all $t \geq s$.
\end{enumerate}

  \end{definition}
For the following definitions, we use a slightly adapted weak formulation that allows the test functions to be stochastic processes and not only constant functions in time. In the present setting, both formulations are equivalent. Indeed, 
by Itô's product rule, for all adapted $\eta \in C([0,T];L^2(\Omega))$, Definition \ref{def:weakweakNav}, (v) implies an equation for $\eta(t)\int_{\D } \tilde{\f{u}}(t) \cdot \boldsymbol{\varphi} \mathrm{d} x$. Now using the density of the span of $\eta(t)\boldsymbol{\varphi}(x)$ in the space of progressively measurable processes in  $C([0,T];L^2(\Omega;C^2(\D)))$ and an approximation argument, the solution property in form of \eqref{eq:defMeasweakNav}, that is, for progressively measurable test-processes follows.
\begin{definition}[Test process]\label{def:stoch}
 We call $\f \varphi$ a test process, if it is $(\mathcal{F}_t)$-progressively measurable such that 
     $$\f\varphi  \in  L^{\infty} ( \Omega ; \C([0,T]; L^2_\sigma (\D)) \cap L^1(0,T;\C^1(\D)\cap H^2(\D)))  \,, $$ 
     with
     $$
     \de \f \varphi =  A \de t + B \de W_t 
     $$
     for  certain   $(\mathcal{F}_t)$-progressively measurable  functions $$ A \in L^{2}_{w}(\Omega ; L^2(0,T;L^2_{\sigma}(\D))) \quad \text{ and }\quad B \in L^\infty (\Omega; L^2(0,T; L_2 (\mathfrak{U}; 
      H^2(\D)\cap 
     H^1_{0,\sigma}(\D)))\,.$$
\end{definition}
\begin{remark}
    We use the usual abuse of notation for Young measures, where the Young measure $\mu$ applied to the function $\lambda_{\f u} \mapsto \lambda_{\f u}\otimes \lambda _{\f u}$ is denoted by the dummy variable $ \lambda_{\f u}$, \textit{i.e.,} 
$$\langle \mu ,| \lambda_{\f u} |^2 \rangle = \int_{\R^{d}} |\f s |^2 \de \mu(\f s)\,. $$
    
\end{remark}

\begin{definition}[Measure-valued strong solutions to Navier--Stokes]
\label{def:measstrNav}
Let $\f u_0 \in L^2(\Omega,\mathcal{F}_0;L^2_{\sigma}(\D))$. 
 An $(\mathcal{F}_t)$-progressively measurable Young measure on the original stochastic basis 
\begin{align}
    \mu \in L^1 ( \Omega; {\mathcal{F}} ; L^{1}(\D\times (0,T) ); \mathcal{P}( \R^d)) 
\end{align} 
 is called a probabilistically strong, measure-valued, or in short strong-measure valued, solution to~\eqref{eq:stochastic_euler_additive}   with  $\nu>0$ and the initial data $\f{u}_0$ provided 
\begin{enumerate}[label=\roman*)]
\item the velocity field $\f u := \langle \mu , I\rangle $ is $(\mathcal{F}_t)$-progressively measurable with 
$$
\f{u} \in L^2(\Omega; 
L^\infty\left([0, T] ; L_{\sigma}^2\left(\D \right)\right) \cap L^2\left(0, T ; W_{\sigma}^{1,2}\left(\D \right)\right) 
$$
such that $\f u \in \C_w([0,T];L^2_\sigma(\D))$ $\mathbb P$-almost surely; 
 \item For all test  processes in the sense of Definition~\ref{def:stoch} 
     the following equation holds \( \mathbb{P} \)-almost surely for all $t\in [0,T]$ with $\f u(0)=\f u_0$:
 \begin{equation}\label{eq:defMeasweakNav}
            \begin{aligned}
      \int_{\D} \f{u}\cdot \boldsymbol{\varphi} \, dx \Big|_{0}^{t}-&\int_{0}^t  \left[
    \int_{\D} \left \langle \mu ,  \left[ \lambda_{\f{u}} \otimes \lambda_{\f{u}} \right] : \nabla \boldsymbol{\varphi} \right \rangle  \, \de x \,\right]\de \tau \\&+ \int_0^t \int_{\D} \nu \nabla \f u : \nabla \f \varphi - A \cdot \f{u} - \frac{1}{2} \f u \cdot [\mathcal{P}(\sigma^2 \cdot \nabla )]^2 \f \varphi \de x \de \tau 
    \\
   & ={}  \int_0^t  
       \int_{\D} \sigma ^1 \cdot \boldsymbol{\varphi} - [\mathcal{P}(\sigma^2 \cdot \nabla )] \f \varphi \cdot \f u  + B \cdot \f{u} \, \de x \de W (\tau)
       \\&\quad + \int_0^t\Tr{
       \int_{\D} \sigma ^1 \cdot B - \f u \cdot [\mathcal{P}(\sigma^2 \cdot \nabla )] B 
       \de x} \de \tau ,
    \end{aligned}
        \end{equation}
    \item the energy inequality 
    \begin{multline}\label{eq:defMEasinNav}
           \frac{1}{2}\int_{\D}\langle \mu , |\lambda_{\f{u}}|^2 \rangle \de x \Big|_{s}^{t}  +\nu \int_s^t  \int_{\D} |\nabla \f u|^2 \de x \de \tau
  \\ \leq {} \int_s^t  \int_{\D}  \mathbf u \cdot \sigma^1 \de x  dW({\tau}) + \frac{1}{2}\int_s^t
 \|\sigma^1 \|_{L_2(\mathfrak{U}, L^2_\sigma(\D))}^2  
  \de \tau .
    \end{multline}
    holds for almost all $0\leq s <t\leq T $ and \( \mathbb{P} \)-almost surely.
\end{enumerate}

  \end{definition}

The following definition and theorem represent the first key point of this work. The introduced concept of solutions combines the usual probabilistically weak existence of weak solutions for the stochastic Navier Stokes equations with the observation that probabilistically strong solutions can be obtained in the measure-valued sense. The construction presented in this work combines both of these approaches, thereby constructing probabilistically strong, measure-valued solutions, that in law - in a sense to be made precise - coincide with a weak solution.

\begin{definition}[Strong-measure-valued--weak-weak solution to Navier--Stokes] \label{def:allNav}

For an initial value  $\f u_0\in L^{2}\left(\Omega;L_{\sigma }^2\left(\D\right)\right)$, we call 
the tuple consisting of a new stochastic basis and an associated process
$$
\left(\left(\tilde\Omega, \tilde{\mathcal{F}},\left(\tilde{\mathcal{F}}_t\right), \tilde{\mathbb{P}}\right), \tilde {\f{u}}, \tilde W\right)
$$
as well as a Young measure on the original stochastic basis 
\begin{align*}
    \mu \in L^1 ( \Omega; {\mathcal{F}}_t ; L^{1}(\D\times (0,T) ); \mathcal{P}( \R^d)) 
\end{align*} 
probabilistically strong, measure-valued, and probabilistically weak, weak solution
to~\eqref{eq:stochastic_euler_additive}  with the initial data $\f u_0$ provided $\tilde{\f u}$ is a weak weak solution on the new stochastic basis in the sense of Definition~\ref{def:weakweakNav} and $\mu$ is a measure-valued solution on the given stochastic basis in the sense of Definition~\ref{def:measstrNav} and both are connected via:
    for all $ \varphi \in \C(\D \times (0,T)) $ and all $f\in \C(\R^d ; \R)$ with $ |f(x)| \leq c(|x|  +1) $ 
    it holds 
    \begin{equation}\label{eq:idenstrongweak}
        \int_0^T\int_{\D} \varphi \int_{\Omega} \langle \mu , f \rangle\de \mathbb P(\omega) \de x \de t =  \int_0^T\int_{\D} \varphi \int_{\tilde{\Omega}} f(\tilde{\f u}) \de \tilde{\mathbb{P}}(\tilde{\omega})\de x \de t\,.
    \end{equation}

  \end{definition}
\begin{theorem}[Existence of solutions to Navier--Stokes]\label{thm:exMeasNav}
Let Assumption~\ref{Ass:1} be fulfilled as well as $\f {u}_0\in L^p ({\Omega},\mathcal{F}_0; L_{\sigma }^2\left(\D\right))$ for some $p>4$.
    Then there exists a probabilistically strong, measure-valued, and probabilistically weak, weak solution 
    in the sense of Definition~\ref{def:allNav} such that in addition 
$$ \begin{aligned}
    \f u &\in 
L^{p}_{w}(\Omega; L^\infty(0,T; L^2_{\sigma}(\D))\cap L^2(0,T;H^1_{\sigma}(\D))
        \cap L^{p/2}(\Omega;W^{\alpha,p/2}(0,T;(H^2(\D)\cap H^1_{0,\sigma}(\D))^*)) 
        \end{aligned}$$ 
        for any  
       $\alpha <\frac{1}{2}$ as well as 
       $$\mu \in 
         L^{s}_{w}(\Omega; L^{s} (\D\times (0,T); \mathcal{P}(\R^d)) )\quad \text{with }s = 
         2(d+2)/d
         \,.$$
\end{theorem}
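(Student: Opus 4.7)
The plan is to combine a Faedo--Galerkin approximation carried out on the original stochastic basis $(\Omega,\mathcal F,(\mathcal F_t),\mathbb P)$ with two complementary passages to the limit: a Young--measure argument on the same basis, which produces the strong measure-valued solution $\mu$, and a Skorokhod--Jakubowski argument on the laws of the approximations, which produces the weak-weak solution $\tilde{\f u}$ on a new basis. The identification~\eqref{eq:idenstrongweak} is then read off by matching the two limits against the common laws of the Galerkin sequence.

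\textbf{Galerkin step and uniform estimates.} Let $(\f w_k)_k$ be the eigenbasis of the Stokes operator and $P_n$ the $L^2_\sigma$--projection onto $\mathrm{span}\{\f w_1,\dots,\f w_n\}$. The system~\eqref{eq:stochastic_euler_additive} projected onto $P_n L^2_\sigma(\D)$ is a locally Lipschitz finite-dimensional SDE with a unique global $(\mathcal F_t)$--adapted solution $\f u_n$ starting from $P_n\f u_0$. Itô's formula for $\|\f u_n\|_{L^2}^2$, the orthogonality of $\sigma^1$ and $\sigma^2$, the Burkholder--Davis--Gundy inequality, and Grönwall's lemma yield
\[
\sup_n \mathbb E\Bigl[\sup_{t\in[0,T]}\|\f u_n(t)\|_{L^2}^p+\Bigl(\int_0^T\|\nabla\f u_n\|_{L^2}^2\de t\Bigr)^{p/2}\Bigr]<\infty .
\]
Ladyzhenskaya's interpolation then delivers the $L^{s}(\Omega\times(0,T)\times\D)$ bound with $s=2(d+2)/d$, and the fractional-time bound in $W^{\alpha,p/2}(0,T;(H^2\cap H^1_{0,\sigma})^*)$ for every $\alpha<1/2$ follows by bounding the drift in $W^{1,p/2}_t$ via the $L^s_{t,x}$--control of $\f u_n\otimes\f u_n$ and the stochastic integral in $W^{\alpha,p/2}_t$ via the Kolmogorov--BDG lemma applied to the Hilbert--Schmidt operators $\sigma^1$ and $\sigma^2$.

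\textbf{Young--measure limit on the original basis.} Theorem~\ref{thm:Young} applied to $(\f u_n)$ on $\Omega\times(0,T)\times\D$ with exponent $s$ extracts, along a subsequence, an $(\mathcal F_t)$--progressively measurable Young measure $\mu_{(\omega,t,x)}\in\mathcal P(\R^d)$ such that $\f u_n\rightharpoonup\f u:=\langle\mu,\lambda_{\f u}\rangle$ and $\f u_n\otimes\f u_n\rightharpoonup\langle\mu,\lambda_{\f u}\otimes\lambda_{\f u}\rangle$ weakly in $L^{s/2}$. Lemma~\ref{lem:weakcont} with $\mathbb X=L^2_\sigma$ and $\mathbb Y=H^2\cap H^1_{0,\sigma}$ provides weak continuity in time of $\f u$. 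In the projected weak formulation~\eqref{eq:defMeasweakNav}, linear terms converge by weak $L^2$ convergence, the convective term via the Young measure representation, and the stochastic integrals by weak continuity of the Itô map on progressively measurable $L^2$--integrands, exploiting that each test process enters only linearly in $\f u_n$. The energy inequality~\eqref{eq:defMEasinNav} is obtained by testing the Galerkin energy identity with nonnegative $\phi\in\C^1_c([0,T))$ and $\psi\in L^\infty(\Omega)$ and passing to the limit by weak lower semicontinuity for $\int|\nabla\f u_n|^2$ and the Young measure identity for $|\f u_n|^2$; Lemma~\ref{lem:invar} then rephrases the resulting variational inequality in pointwise form.

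\textbf{Skorokhod--Jakubowski copy and identification.} The same uniform estimates together with an Aubin--Lions argument make the laws of $(\f u_n,W)$ tight in the quasi-Polish space $(L^2(0,T;L^2_\sigma)\cap\C_w([0,T];L^2_\sigma))\times\C([0,T];\mathfrak U_0)$; Jakubowski's theorem then produces a new basis $(\tilde\Omega,\tilde{\mathcal F},(\tilde{\mathcal F}_t),\tilde{\mathbb P})$, a cylindrical Wiener process $\tilde W$ and $\tilde{\f u}_n\to\tilde{\f u}$ $\tilde{\mathbb P}$--almost surely with $(\tilde{\f u}_n,\tilde W_n)$ sharing the law of $(\f u_n,W)$. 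Almost sure convergence allows to pass to the limit in every nonlinear and stochastic term as in~\cite{FlandoliGatarek,BreitFeireislHofmanova}, producing a weak-weak solution $\tilde{\f u}$ in the sense of Definition~\ref{def:weakweakNav}. For $\varphi\in\C(\D\times(0,T))$ and $f\in\C(\R^d)$ of linear growth, equality in law yields
\[
\mathbb E\int_0^T\!\!\int_\D\varphi\, f(\f u_n)\de x\de t=\tilde{\mathbb E}\int_0^T\!\!\int_\D\varphi\, f(\tilde{\f u}_n)\de x\de t ;
\]
the left-hand side converges to $\int_0^T\int_\D\varphi\int_\Omega\langle\mu,f\rangle\de\mathbb P\de x\de t$ by Theorem~\ref{thm:Young} and the right-hand side to $\int_0^T\int_\D\varphi\int_{\tilde\Omega} f(\tilde{\f u})\de\tilde{\mathbb P}\de x\de t$ by dominated convergence, which is~\eqref{eq:idenstrongweak}. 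The delicate point is to execute the Young--measure limit \emph{without} leaving the original probability space while still passing to the limit inside the stochastic integrals; this rests on the weak continuity of the Itô integral in its integrand on the closed subspace of progressively measurable $L^2$--processes, which in turn uses the convexity/closedness-under-weak-limits argument that underpins Theorem~\ref{thm:Young}.
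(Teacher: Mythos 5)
Your proposal is correct and follows essentially the same route as the paper: Galerkin approximation with the stated moment and Sobolev--Slobodeckij bounds, a Young-measure limit on the original basis (with the stochastic integrals handled by linearity/weak continuity of the Itô map, which the paper implements by testing against $\psi\in L^\infty(\Omega)$, $\phi\in\C^1_c([0,T))$ and invoking Lemma~\ref{lem:invar} in both directions), a Skorokhod--Jakubowski limit for the weak-weak part, and identification of the two limits through equality of laws of $f(\f u_n)$. The only cosmetic difference is that the paper justifies the convergence of $\tilde{\mathbb E}\int\varphi f(\tilde{\f u}_n)$ via the a.s.\ strong convergence in $L^2(0,T;L^2_\sigma(\D))$ combined with the uniform $L^p$ moment bounds (Vitali) rather than plain dominated convergence, which is what your linear-growth assumption on $f$ actually requires.
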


\begin{proof}
We structure the  proof of existence of these solutions to the Navier--Stokes equations in different steps. For the proof of existence of weak weak solutions in the sense of Definition~\ref{def:weakweakNav} we mainly rely on the result and proof of~\cite{FlandoliGatarek} and choose the same approximate problem, namely a Galerkin scheme based on Eigenfunctions of the Stokes operator. 

\textit{Step 1: Discrete setting and existence.}
   Consider the Stokes operator with homogeneous Dirichlet boundary conditions, \textit{i.e.,}  the operator associated to the map $ \f h \mapsto (\f w , p)$ for the following elliptic problem, 
    \begin{align*}
-\Delta \f w + \nabla p  = \f h \quad\text{in } \D \, , \qquad 
\nabla \cdot  \f w = 0 \quad\text{in } \D \,,\qquad
\f w= 0\quad\text{on  } \partial\D \,.
\end{align*}
The Stokes operator is self-adjoint and the inverse of a compact operator, therefore there exists a sequence of eigenfunctions~\cite[Section~21.6]{temam}
 \( \{\f v _k\}_{k=1}^\infty \subset  H^2(\D)\cap H^1_{0,\sigma}(\D) \subset 
     L^2_{\sigma}(\mathcal{D}) 
    \). 
This basis can be chosen to be orthonormal in $L^2_\sigma(\D)$ and additionally orthogonal in $H^1_{0,\sigma}(\D)$.
For each \( N \in \mathbb{N} \), define the Galerkin subspace
\[
V_N := \text{span} \{ \f v _1, \dots, \f v _N \}
\]
such that $\mathrm{clos}_{\| \cdot\|_{H^1(\D)}} \left( \bigcup_{N=1}^\infty V_N\right)= H^1_{0,\sigma}(\D)$.
We define the usual $L^2$-projection on the finite dimensional spaces via 
\begin{align*}
P_N : L^2_{\sigma}(\D)  \longrightarrow V_N\,,\quad P_N \f v = \sum_{i=1}^N \int_{\D} \f v \cdot  \f v _i\de x   \f v _i\,. 
\end{align*}
There exists a constant $c>0$ such that for all $N\in \N$ and $\f v \in H^2(\D)\cap H^1_{0,\sigma}(\D)$ such that 
\begin{equation}
\|P_N \f v\|_{\f H^2} \le c \|\f v\|_{\f H^2} \, \label{PnH2}
\end{equation}
(cf.%
~\cite[Appendix, Theorem~4.11 and Lemma~4.26]{malek} together with~\cite[Proposition~III.3.17]{boyer}).
For this property the regularity assumption $\partial \D \in \C^2 $ is essential.

We seek an approximate solution \( u^N(t) \in V_N \) of the form
\[
u^N(t,x) := \sum_{k=1}^N a_k^N(t) \f v _k(x),
\]
where the coefficients \( a_k^N(t) \) satisfy the following finite-dimensional system:

\begin{equation} \label{eq:galerkin}
\begin{aligned}
\de \int_{\D}  u^N(t)\cdot \varphi_N \de x  
&+ \int_{\D} (u^N(t) \cdot \nabla) u^N(t)\cdot  \varphi_N + \nu \nabla u^N(t) : \nabla \varphi_N \de x  \de t \\
&+\int_{\D} \frac{1}{2} \mathcal P(\sigma^2\cdot \nabla ) \varphi_N\cdot\mathcal P (\sigma^2\cdot \nabla )u^N (t)\de x \de t 
\\&= \int_{\D} (\sigma^1\cdot  \varphi_N -  u^N(t) \cdot  (\sigma^2  \cdot \nabla)\varphi_N   \de x \de{W}(t) , 
\end{aligned}
\end{equation} 
for all $ \varphi_N \in V_N$,
with initial data
\[
u^N(0) = P_N u_0 := \sum_{k=1}^N \int_{\D} u_0\cdot  \f v _k \de x  \f v _k.
\]
Rewriting the discrete equation~\eqref{eq:galerkin} via the representation  \( u^N(t,x) = \sum_{k=1}^N a_k^N(t) \f v _k(x) \in V_N \) satisfies the finite-dimensional SDE:
\begin{equation} \label{eq:sde_finite}
\de  a_j^N(t) = - \sum_{i,k=1}^N b_{i,k,j} \, a_i^N(t) a_k^N(t) \, \de t - \sum_{i=1}^N d_{i,j} \, a_i^N(t)  \, \de t 
+ \sum_{\ell=1}^\infty\left (\eta_{j,\ell}+ \sum_{i=1}^N \zeta_{j,\ell,i}a_i^N(t)\right) \,  \de \beta _\ell(t), 
\end{equation}
for $j\in \{ 1,\ldots,N\}$
with initial condition
\[
a_j^N(0) = \int_{\D} u_0\cdot  \f v _j \de x , 
\]
where
 \( b_{i,k,j} := \int_{\D} (\f v _i \cdot \nabla) \f v _k\cdot \f v _j \de x \), \( d_{i,j} := \int_{\D} \nu \nabla \f v _i :\nabla \f v _j + \frac{1}{2}\mathcal{P}(\sigma^2\cdot\nabla )\f v _i \cdot \mathcal P(\sigma^2\cdot\nabla)\f v_j  \de x \) are the structure coefficients of the drift terms,
 \( \{\beta _\ell(t)\}_{\ell=1}^\infty \) are independent real-valued Wiener processes,
  \( (\sigma^1,\sigma^2) \in L_2(\mathfrak{U}, L^2_\sigma(\mathcal{D})\times H^1_{0,\sigma}(\D)) \), and \( \eta_{j,\ell} := \left\langle (\sigma^1\f e_{\ell}, \f v _j \right\rangle \) as well as $ \zeta_{j,\ell,i} :=\left\langle (\sigma^2\f e_\ell \cdot\nabla) \f v _i, \f v _j \right\rangle $ are the matrices of  the noise operators.
This system is a standard SDE in \( \mathbb{R}^N \) with locally Lipschitz drift and   noise.
By classical results (e.g., in~\cite[Thm.~3.1.1]{SPDEBook}), it admits a unique global solution \( a^N(t) = (a_1^N(t), \dots, a_N^N(t)) \in \mathbb{R}^N \), and thus a unique approximate velocity field \( u^N(t) \in V_N \).

\textit{Step 2: a priori estimates.}
Applying Itô's formula to the \( L^2 \)-norm \( \|u^N(t)\|_{L^2(\D)}^2 \), we obtain:
\begin{align}\label{eq:ito_energy}
\begin{split}
    \frac{1}{2}\|u^N(t)\|_{L^2(\D)}^2 &+\nu\int_0^t \|\nabla u^N(s)\|_{L^2(\D)}^2\de s 
- \frac{1}{2}\|u^N(0)\|_{L^2(\D)}^2 \\
&=  \int_0^t \int_{\D} \sigma^1 \,  u^N(s) \de x \cdot  \de W(s)  
 + \frac{1}{2}\int_0^t \|\sigma^1\|_{L_2(\mathfrak{U}, L^2_\sigma(\D))}^2 \, \de s
 \\
 & \quad  - \frac{1}{2}\int_0^t\int_{\D}| \mathcal P (\sigma^2\cdot \nabla )u^N (s)|^2 \de x \de s - \frac{1}{2}\int_0^t\int_{\D} [\mathcal P (\sigma^2\cdot \nabla )]^2u^N (s) \cdot u^N (s)   \de x \de s
 \\
 &\quad + \int_0^t \Tr{\int_{\D} ( \sigma^2 \cdot \nabla ) u^N (s)\cdot  \sigma^1 \de x }\de s 
 \\
 &=  \int_0^t \int_{\D} \sigma^1 \,  u^N(s) \de x \cdot  \de W(s)  
 + \frac{1}{2}\int_0^t \|\sigma^1\|_{L_2(\mathfrak{U}, L^2_\sigma(\D))}^2 \, \de s
\, ,
\end{split}
\end{align}
where we used the cancellation of the Itô-Stratonovich correction term with  the additional term from the Itô formula after an integration-by-parts using that $\sigma^2$ is divergence free, \textit{i.e.}, 
\begin{align*}
    -\frac{1}{2}\int_0^t\int_{\D}| \mathcal P (\sigma^2\cdot \nabla )u^N (s)|^2 \de x \de s =\frac{1}{2}\int_0^t\int_{\D} [\mathcal P (\sigma^2\cdot \nabla )]^2u^N (s) \cdot u^N (s)   \de x \de s\,.
\end{align*}
Moreover, we use
that it holds for two noise operators $ \Tr{\int_{\D} ( \sigma^2 \cdot \nabla ) \sigma^1\cdot u^N (s) \de x}= 0$ by assumption.
Note that the nonlinear convective term vanishes due to skew-symmetry.

For  \( p >4\), we are taking the $p$-th power of~\eqref{eq:ito_energy}. Using that there are only nonnegative terms on the left-hand side, we find after taking expectations 
\begin{equation}
\begin{aligned}
\mathbb{E} &\left[ \sup_{t \in [0,T]} \|u^N(t)\|_{L^2(\D)}^{p} \right] + \expect{\nu \|\nabla u^N \|_{L^2(\D\times (0,T))}^{p} }
\\&\leq C \left( \expect{\|u_0\|_{L^2(\D)}^{p} }
+ \mathbb{E} \left[ \sup_{t \in [0,T]} \left| \int_0 ^t \int_{\D} \sigma^1 \cdot u^N  \de x \de W( s)  \right|^{p/2} \right] \right. \\
&\qquad \left. + \mathbb{E} \left[ \left( \int_0^t \|\sigma^1\|_{L_2(\mathfrak{U}, L^2(\D))}^2 \, \de t  \right)^{p/2} \right] \right)
\end{aligned}
\end{equation}
 and by the BDG inequality:
\begin{equation*}
\begin{aligned}
    \mathbb{E} \left[ \left( \sup_{t \in [0,T]} \int_0^t \int_{\D} \sigma  ^1 u^N \de x \de W(s)  \right)^{p/2} \right]
&\leq C_p \, 
\expect{\left(
\int_0^T \Tr{\left(\int_{\D} \sigma^1 \cdot u^N \de x\right)^2 }\de t \right)^{p/4}
}
\\
&\leq C_p \, \|\sigma^1\|_{L_2(\mathfrak{U}, L^2(\D))}^{p/2} 
\, \mathbb{E} \left[ \left( \int_0^T \|u^N \|_{L^2(\D)}^2 \, ds \right)^{p/4} \right].
\end{aligned}
\end{equation*}
Using Young's inequality and Gronwall-type arguments, one then obtains:
\begin{equation}\label{aprioribounds}
\mathbb{E} \left[ \sup_{t \in [0,T]} \|u^N(t)\|_{L^2(\D)}^p \right]+ \nu\expect{ \|\nabla u^N \|_{L^2(\D\times (0,T))}^p }
\leq C(p,T, \|u_0\|_{L^2(\D)}, \|\sigma^1\|_{L_2(\mathfrak{U}, L^2(\D))}),
\end{equation}
with a constant \( C \) independent of \( N \).

In order to infer an estimate of $ u^N$ in a Sobolev--Slobodeckij space, we chose $ \varphi_N= P_N \varphi$ for $\varphi\in H^2(\D)\cap H^1_{0,\sigma}(\D)$ in~\eqref{eq:galerkin} and estimate using Fubini~\cite[Thm.~4.18]{DaPratoZabczyk}
\begin{align*}
\begin{split}
        \int_{\D}& (u^N(t)-u^N(s)) \cdot P_N\varphi \de x\\
    = {}& -\int_s^t   \int_{\D} (u^N \cdot \nabla) u^N\cdot  P_N\varphi + \nu \nabla u^N : \nabla P_N\varphi
    +\frac{1}{2} \mathcal{P}(\sigma^2\cdot \nabla) u^N \cdot \mathcal{P}(\sigma^2\cdot \nabla)P_N\varphi 
    \de x  \de \tau  \\
{}& +  \int_s^t\int_{\D} \sigma^1 \, \cdot  P_N\varphi - u^N \cdot ( \sigma^2 \cdot \nabla ) P_N\varphi  \de x \de{W}(\tau )
\\
\leq{}& \int_s^t \| u^N \|_{L^2(\D)}\|\nabla u^N\|_{L^2(\D)}\| P_N\varphi \|_{L^\infty(\D)} \de \tau  \\&+\int_s^t\left( \nu +\frac{1}{2}\| \sigma^2\|_{L^2(\mathfrak{U}; H^2(\D))}^2 \right)  \|\nabla u^N\|_{L^2(\D)}\|\nabla P_N\varphi\|_{L^2(\D)} \de \tau 
\\ 
& +   \left \|\int_s^t\sigma^1\de W \right \|_{ L^2(\D)} \| P_N \varphi \|_{L^2(\D) } \\&+ 
\left \| \int_s^tu^N\otimes \sigma^2\de W \right \|_{L^2(\D)} \| \nabla P_N \varphi \|_{L^2(\D) } \,.
\end{split}
\end{align*}
Due to the stability of the projection, we conclude 
\begin{align}\label{eq:timeest}
\begin{split}
     \| u^N(t)&-u^N(s)\|_{(H^2(\D)\cap H^1_{0,\sigma}(\D))^*}\\
\leq{}& 
C\|u^N \|_{L^\infty(0,T;L^2_{\sigma}(\D))} \| u^N\|_{L^2(0,T;H^1_{0,\sigma}(\D))}  
(t-s)^{1/2} 
\\ &+ C \left( \nu +\frac{1}{2}\| \sigma^2\|_{L^2(\mathfrak{U}; H^2(\D))}^2 \right) \| u^N\|_{L^2(0,T;H^1_{0,\sigma}(\D))} (t-s)^{1/2} 
\\
& + C  \left \|\int_s^t\sigma^1\de W \right \|_{ L^2(\D)} 
+ C 
 \left \| \int_s^t u^N \otimes \sigma^2\de W \right \|_{ L^2(\D)} 
 \,.
\end{split}
\end{align}

Using the fact that the stochastic integrals are known to be bounded in a Sobolev--Slobodeckij space~\cite[Lem.~2.1]{FlandoliGatarek}, \textit{i.e.,}
$$
\begin{aligned}
    \left\| \int_0^\cdot \sigma ^1  \de W \right\|_{L^q(\Omega;W^{\alpha,q}(0,T; L^2_{\sigma}(\D)))}^q &\leq  C 
\left \| \sigma^1 \right\|_{
L^2(\mathfrak{U};L^2_\sigma(\D))}^q
\\
 \left\| \int_0^\cdot
  u^N\otimes 
 \sigma ^2  \de W \right\|_{L^q(\Omega;W^{\alpha,q}(0,T; L^2(\D)))}^q 
& \leq  C \expect{\int_0^T\| u^N \otimes \sigma^2 \|_{L^2(\D)}^q \de t  }\\
 &\leq  C\| u^N\|_{L^q(\Omega; L^q(0,T;L^2_{\sigma}(\D)))}^q
 \left \| \sigma^2 \right\|^q_{
L^2(\mathfrak{U};H^2(\D))} 
\end{aligned}
$$
for $\alpha < 1/2$ and any $q\in [2,\infty)$. 
Dividing~\eqref{eq:timeest} by $|t-s|^{1/q+\alpha}$, taking the $q$-th power, integrating over $t $ and $s$, 
and taking 
expectations let us conclude that there exists a constant $C>0$ such that
\begin{multline*}
    \| u^N \|_{L^{q}(\Omega; W^{\alpha,q}(0,T;(H^2(\D)\cap H^1_{0,\sigma}(\D))^*))} \\
    = \expect{  \int_0^T \int_0^T \frac{\| u^N(t)-u^N(s)\|_{(H^2(\D)\cap H^1_{0,\sigma}(\D))^*}^q}{|t-s|^{1+\alpha q}}\de s\,\de t}^{\frac{1}{q}}
    \\ \leq  C (\|u^N \|_{L^{2q}(\Omega;L^\infty(0,T;L^2_{\sigma}(\D)))}+1)( \| u^N\|_{L^{2q}(\Omega;L^2(0,T;H^1_{0,\sigma}(\D)))} +1)  \,
\end{multline*}
for any $\alpha < 1/2 $ and $q=p/2>2$. We note that the right-hand side is bounded due to~\eqref{aprioribounds}.  
We note that from~\cite[Thm.~2]{FlandoliGatarek}, we infer the continuous embedding into $\C([0,T];(H^2(\D)\cap H^1_{0,\sigma}(\D))^*)$ that becomes a compact embedding into $\C([0,T];(H^3(\D)\cap H^1_{0,\sigma}(\D))^*)$. 
Finally, by the Gagliardo--Nirenberg inequality, we find
\begin{align*}
L^\infty(0,T;L^2_{\sigma}(\D)\cap L^2(0,T;H^1_{0,\sigma}(\D)) \hookrightarrow L^{\frac{2(d+2)}{d}} (\D\times (0,T)) 
\end{align*}
so that there exists a constant $C>0$ fulfilling
\begin{align*}
\| u^N\|_{L^{p}(\Omega; L^{\frac{2(d+2)}{d}} (\D\times (0,T)) )}\leq C \,.
\end{align*}
\textit{Step 3: Compactness and convergence to a martingale solution.}
Using the above \textit{a priori} bounds, it is now a standard matter to construct via compact embeddings and a version of the Skorohod representation theorem~\cite{Jakubowski_2} or~\cite[p.~9] {EmbeddingCompact} a weak martingale solution in the sense of Definition~\ref{def:weakweakNav}. 
We only note that after changing the stochastic basis to 
$$
\left(\left(\tilde\Omega, \tilde{\mathcal{F}},\left(\tilde{\mathcal{F}}_t\right), \tilde{\mathbb{P}}\right), \tilde {\f{u}}, \tilde W\right)
$$
and the subsequence $ \tilde{u}^N$ such that $ \tilde{u}^N$ has the same Law as $ u^N$,  we infer the convergence 
\begin{equation}\label{eq:strongconv}
    \tilde{ u}^N \to \tilde{\f u}  \quad \text{ in }  L^2(0,T;L^2_\sigma(\D)))\cap \C([0,T];(H^3(\D)\cap H^1_{0,\sigma}(\D))^*)
 \end{equation}
$\tilde{\mathbb P}$-almost surely.
Observing that the $ L^2(0,T;H^1_{0,\sigma}(\D))$-norm is lower semicontinuous with respect to the strong topology in $L^2(0,T;L^2_\sigma(\D))$, we find by the 
 $\tilde{\mathbb P}$-almost sure-convergence and Fatou's lemma 
 that
$$
\begin{aligned}
    \int_{\tilde\Omega}\int_0^T \| \tilde{\f u}\|_{H^1_{0,\sigma}(\D)}^2\de t \,\de\tilde{\mathbb{P}}(\omega)
&\leq \liminf_{N\to \infty} \int_{\tilde\Omega}\int_0^T \| \tilde{u}^N\|_{H^1_{0,\sigma}(\D)}\de t \,\de\tilde{\mathbb{P}}(\omega)\\
& = \liminf_{N\to \infty} \int_{\Omega}\int_0^T \| {u}^N\|_{H^1_{0,\sigma}(\D)}\de t \,\de{\mathbb{P}}(\omega)\leq C \,.
\end{aligned}
$$
Furthermore, we observe that the $L^\infty(0,T;L^2_\sigma(\D))$-norm is lower semicontinuous as it is the supremum of lower semicontinuous functionals $\| \cdot \| _{L^\infty(0,T;L^2_\sigma(\D))} = \sup_{p\in[1,\infty)}\| \cdot \| _{L^p(0,T;L^2_\sigma(\D))} $. With the same argument as above, we find now that $ \tilde{\f u}\in L^2_{w^*}(\Omega; L^\infty(0,T;L^2_\sigma(\D)))$. 
Together with Lemma~\ref{lem:weakcont} this implies item~\ref{reg:weakweak} in Definition~\ref{def:weakweakNav}.  
In the following, we will focus on the construction of measure-valued strong solutions in the sense of Definition~\ref{def:measstrNav}. 

\textit{Step 4: Convergence to a measure-valued solution.} 
From the apriori estimates and Theorem~\ref{thm:Young}, we may extract a converging subsequence, which we do not relabel such that 
\begin{align}
     u^N &\stackrel{*}{\rightharpoonup} \f u \quad && \text{in } L^{p}_{w^*}(\Omega; L^\infty(0,T; L^2_{\sigma}(\D))\cap L^2(0,T;H^1_{\sigma}(\D))
      \,,\label{weak:conv}\\
       u^N &\stackrel{*}{\rightharpoonup} \f u \quad && \text{in }L^{p/2}(\Omega;W^{\alpha,p/2}(0,T;(H^2(\D)\cap H^1_{0,\sigma}(\D))^*)) \quad \text{with }\alpha <\frac{1}{2}\,,\label{weak:conv2}
    \\
   \delta_{u^N
   }&\stackrel{*}{\rightharpoonup} \mu  \quad && \text{in } L^{s}_{w}(\Omega; L^{s} (\D\times (0,T); \mathcal{P}(\R^d)) )\quad \text{with }s = 
   2(d+2)/d
   \,. \label{weak:convE}
\end{align}
We note that $p>4\geq 2(d+2)/d$ for $d\in \{2,3\}$. Due to the uniqueness of the weak limit, we have $\langle \mu , I \rangle = \f u $, where the limits inherit the $(\mathcal{F}_t)$-progressive measurability according to Theorem~\ref{thm:Young}. 

Now we consider $ u^N$ fulfilling the stochastic Galerkin formulation~\eqref{eq:galerkin} and $\varphi$ a  stochastic test process  in the sense of Definition~\ref{def:stoch}. Then, we apply the finite dimensional Itô formula to the product $\int_{\D}  u^N \cdot P_m \varphi \de x $ for $m\leq N$ to infer that
\begin{equation}\label{eq:disweak}
\begin{aligned}
    \int_{\D} &  u^N \cdot P_m\varphi \de x \Big|_s^t = \int_s^t \int_{\D} u^N \otimes  u^N : \nabla P_m \varphi -\nu \nabla  u^N:\nabla P_m \varphi\de x \de \tau \\& +\int_s^t \int_{\D}  u^N \cdot P_m A -\frac{1}{2} \mathcal{P}(\sigma^2\cdot\nabla)]u^N [\mathcal{P}(\sigma^2\cdot\nabla)] P_m\varphi   \de x \de \tau
    \\
     &+ \int_s^t \int_{\D} \sigma^1 \cdot  P_m \varphi - u^N \cdot (\sigma^2 \cdot\nabla ) P_m\varphi   +  u^N \cdot P_mB  \de x   \de W(\tau)  \\& + \int_s^t 
     \Tr{\int_{\D} 
     \sigma^1\cdot P_m B - u^N \cdot (\sigma^2\cdot\nabla) P_m B 
     \de x} \, \de \tau\,.
\end{aligned}
\end{equation}
 for all $m\leq N$ and all $s<t\in [0,T]$. 
As in~\eqref{eq:ito_energy}, we observe the energy equality 
\begin{equation} \label{eq:energydis}
\begin{aligned}
\frac{1}{2}\|u^N(t)\|_{L^2(\D)}^2 +\nu\int_s^t \|\nabla u^N\|_{L^2(\D)}^2\de \tau 
- \frac{1}{2}\|u^N(s)\|_{L^2(\D)}^2 \\
=  \int_s^t \int_{\D} \sigma^1 \cdot   u^N \de x \,\de W(\tau)
 + \frac{1}{2}\int_s^t \|\sigma^1\|_{L_2(\mathfrak{U}, L^2_\sigma)}^2 \, \de\tau \, .
\end{aligned}
\end{equation}
for all $s<t\in [0,T]$, where we used again that the two noise operators  $\sigma^1$ and $\sigma^2$ are orthogonal. 
Using Lemma~\ref{lem:invar}, 
we infer
\begin{equation}\begin{split}\label{eq:Gal}
    -&\expect{\psi\int_0^T \partial_t \phi\left[  \int_{\D} u^N \cdot P_m\varphi \de x \right]\de t } - \expect{ \psi\int_0^T \phi \int _{\D}  u^N \otimes  u^N : \nabla P_m\varphi \de x \de t} \\
    &+ \expect{ \psi\int_0^T \phi \int _{\D}\nu  \nabla u^N:\nabla P_m \varphi - P_m  A \cdot  u^N -\frac{1}{2}u^N [\mathcal P(\sigma^2\cdot\nabla)] ^2 P_m \varphi \de x \de t} \\
  &  -\expect{\psi
     \phi(0) \int_{\D}  u^N(0) \cdot P_m \varphi \de x } 
    \\
& = \expect{\psi\int_0^T \phi \int_{\D} \sigma^1 \cdot P_m\varphi - u^N \cdot (\sigma^2 \cdot \nabla) P_m\varphi + P_m B \cdot u^N \de x    \de W(t) }  
\\& \quad + \expect{\psi
\int_0^T \phi 
 \Tr{\int_{\D} \sigma^1 \cdot P_m B - u^N \cdot (\sigma^2\cdot\nabla) P_m B 
 \de x  }\, \de t }\,.
\end{split}\end{equation}
for all $m\leq N$, all $ \phi \in  \C^1_c([0,T))$ and all $\psi \in L^\infty(\Omega)$
as well as 
\begin{equation}\label{eq:enindis}
\begin{split}
    -&\expect{\psi\int_0^T \partial_t \phi \| u^N \|_{L^2(\D)}^2 \de t } 
    + \expect{ \psi\int_0^T \phi \int _{\D}\nu  |\nabla u^N|^2 \de x \de t}
  - \expect{ \psi\phi(0) \int_{\D}  |u^N(0)|^2 \de x } 
    \\
& \leq \expect{\psi\int_0^T \phi \int_{\D} \sigma^1 \cdot  u^N \de x   \de W(t) }  
+ \expect{\psi\frac{1}{2}\int_0^T \phi \|\sigma^1\|_{L_2(\mathfrak{U}, L^2_\sigma)}^2  \de t  }\,.
\end{split}
\end{equation}
for all $ \phi \in  \C^1_c([0,T);[0,\infty)])$ and all $\psi \in L^\infty(\Omega;[0,\infty))$.

We use the convergence in the sense of Young measures in order to pass to the limit in the quadratic term 
\begin{multline*}
    \lim_{N\to \infty } \expect{\psi \int_0^T \phi\left[ \int _{\D} u^N \otimes u^N : \nabla P_m \varphi \de x 
    \right] \de t} \\
    = \expect{ \psi\int_0^T \phi\left[ \int _{\D} \left \langle \mu  , \lambda_{\f u} \otimes \lambda_{\f u}:  \nabla P_m \varphi \right \rangle  \de x 
    \right] \de t}\,. 
\end{multline*}
Using this convergence, we may pass to the limit in \eqref{eq:Gal}  using~\eqref{weak:conv} and~\eqref{weak:convE} such that we  find
\begin{align*}
    -&\expect{\psi\int_0^T \partial_t \phi \int_{\D}\f u \cdot P_m\varphi  \de x \de t } -  \expect{\psi \int_0^T \phi \int _{\D} \left \langle \mu  , \lambda_{\f u} \otimes \lambda_{\f u}:  \nabla P_m \varphi \right \rangle \de x \de t} \\
   & +\expect{\psi \int_0^T \phi \int_{\D}\nu \nabla 
   \f u\nabla P_m \varphi  - P_m A \cdot \f u -\frac{1}{2}\f  u \cdot [\mathcal P(\sigma^2\cdot\nabla)] ^2 P_m\varphi  \de x\de t }
    \\&-\expect{\psi  \phi(0) \int_{\D} \f  u_0 \cdot P_m  \varphi \de x } 
    \\
={}& \expect{\psi\int_0^T \phi \int_{\D} \sigma^1 \cdot P_m \varphi - 
\f u \cdot (\sigma^2\cdot\nabla)P_m\varphi  + P_m B \cdot \f u  \de x   \de W(t) }  \\
&\quad + \expect{\psi\int_0^T \phi 
\int_{\D} \sigma ^1\cdot  P_mB - \f u\cdot (\sigma^2\cdot\nabla )P_m B 
\de x\, \de t }\,.
\end{align*}
for all $m \in  N$, all $ \phi \in  \C^1_c([0,T);[0,\infty)])$, all $\psi \in L^\infty(\Omega;[0,\infty))$ and all test processes in the sense of Definition~\ref{def:stoch}. Thus, due to the choice of the projection, we know 
$ P_m \varphi \to \varphi $ in $H^{2}(\D)\cap H^1_{0,\sigma}(\D)$ such that especially $ P_m \varphi \to \varphi $ in $W^{1,q}(\D)$ with $\frac{2d}{d-2}>q>d$, we may pass to the limit in the previous inequality with $m\to \infty$. 
Lemma~\ref{lem:invar} now implies~\eqref{eq:defMeasweakNav}. 

Passing to the limit in~\eqref{eq:enindis}, we find by using lower semicontinuity in the second term that
\begin{align*}
    -&\expect{\psi\int_0^T \partial_t \phi \int_{\D} \langle \mu , |\lambda_{\f u} |^2 \rangle \de t } 
    + \expect{\psi \int_0^T \phi \nu \int _{\D} |\nabla u^N|^2 \de x \de t}
  - \expect{\psi \phi(0) \int_{\D}  |\f u_0|^2 \de x } 
    \\
& \leq \expect{\psi\int_0^T \phi \int_{\D} \sigma^1 \cdot \f u \de x   \de W(t) }  
+ \expect{\psi\int_0^T \phi \|\sigma^1\|_{L_2(\mathfrak{U}, L^2_\sigma)}^2  \de t  }\,,
\end{align*}
for all $ \phi \in  \C^1_c([0,T);[0,\infty)])$ and all $\psi \in L^\infty(\Omega;[0,\infty))$.
Lemma~\ref{lem:invar}  
now implies~\eqref{eq:defMEasinNav} first only for a.e. $t\in(0,T)$. 

Finally, the strong convergence~\eqref{eq:strongconv} and the Young measure convergence~\eqref{weak:convE} allows to infer
 for all $ \varphi \in \C(\D \times (0,T)) $ and all $f\in \C(\R^d ; \R)$ with $ |f(x)| \leq c(|x|^{s} +1) $ with $s < \frac{2(d+2)}{d}$ that  
    \begin{equation}
    \begin{aligned}
        \int_0^T\int_{\D} \varphi \int_{\Omega} \langle \nu , f \rangle\de \mathbb P(\omega) \de x \de t 
        ={}& \lim_{N\to \infty}  \int_0^T\int_{\D} \varphi \int_{\Omega} f (u^N)\de \mathbb P(\omega) \de x \de t 
         \\
        ={}& \lim_{N\to \infty}  \int_0^T\int_{\D} \varphi \int_{\tilde{\Omega}} f(\tilde{u}_N) \de \tilde{\mathbb P}(\tilde{\omega})\de x \de t
        \\={}&  \int_0^T\int_{\D} \varphi \int_{\tilde{\Omega}} f(\tilde{\f u}) \de \tilde{\mathbb P}(\tilde{\omega})\de x \de t\,,
        \end{aligned}
    \end{equation}
   which implies~\eqref{eq:idenstrongweak}. We note that the time-regularity of the solution immediately follows from the convergences~\eqref{weak:conv} and~\eqref{weak:conv2} together with Lemma~\ref{lem:weakcont}.
\end{proof}

As a next main point, we introduce the energy-variational solution concept for SPDEs. In this framework the singular limit of vanishing viscosity can be identified without the use of Young-measures. Within the energy-variational framework, we may pass to the limit using only lower semicontinuity. 
Nevertheless, we can infer an equality with a defect measure, similar to the concept in~\cite{HZZ_2}, from the energy-variational inequality via an Hahn--Banach argument (see Theorem~\ref{thm:equi}). 

The existence of the above measure-valued solutions also imply the existence of energy-variational solution in the following sense. 
\begin{definition}[Energy-variational solution to Navier--Stokes]\label{def:EnvarNav}
An $(\mathcal{F})_t$-progressively measurable stochastic process 
\[
(\f{u},E) \in L^2_{w}\left( \Omega;L^\infty(0,T; L^2_\sigma(D))\cap L^2(0,T;H^1_{\sigma}(\D)) \right) \times  L^{1}(\Omega;\mathfrak{D}([0,T]))
\]
such that $ \f u \in \C_w([0,T];L^2_\sigma(\D))$, $\mathbb{P}$-almost surely~is called an \emph{energy-variational solution} for the stochastic incompressible Navier--Stokes equations  if:

\begin{enumerate}[label=\roman*)] 
    \item $ E(t) \geq \frac{1}{2}\int_{\D}| \f{u}(x,t)|^2  d x $ for  every $ t\in [0,T]$, \( \mathbb{P} \)-almost surely,\label{def:NavenVar4}
    \item \label{def:NavenVar5} For all  test processes in the sense of Definition~\ref{def:stoch}   the  energy-variational inequality
\begin{equation}\label{eq:envarNav}
            \begin{aligned}
     \left[ E  - \int_{\D} \f{u} \cdot \boldsymbol{\varphi} \, \de x \right ] \Big|_{s-}^{t}&+  \int_s^t 
    \int_{\D} \nu \nabla \f u : (\nabla \f u - \nabla \f \varphi) + \left[ \f{u} \otimes \f{u} \right] : \nabla \boldsymbol{\varphi} \, dx \, \de \tau  \\ &+\int_s^t 
    2 \|(\nabla \f \varphi)_{\sym,-}\|_{L^\infty(\R^{d\times d})}\left[ \frac{1}{2}\int_{\D} | \f{u}|^2 - E  \right]  \de \tau \\ & +\int_s^t  \int_{\Omega} A \cdot \f{u} + \frac{1}{2}[\mathcal{P}(\sigma^2 \cdot \nabla )]^2 \f \varphi \cdot \f u   \de x \, \de 
    \tau\\
       \leq{}& \int_s^t   \int_{\D}  (\mathbf u -  \f\varphi )\cdot \sigma^1 + \f u \cdot \mathcal{P}(\sigma^2 \cdot \nabla ) \f \varphi   - \f{u}\cdot B \de x\, \de W({\tau})\\ &  + \int_s^t \frac{1}{2}
 \|\sigma^1\|_{L_2(\mathfrak{U}, L^2_\sigma(\D))}^2  -
 \Tr{\int_{\D} \sigma ^1 \cdot B - \f u \cdot \mathcal{P}(\sigma^2 \cdot \nabla ) B \de x
} 
 \de 
\tau     \end{aligned}
    \end{equation}
         holds for all $s<t \in [0,T]$ and \( \mathbb{P} \)-almost surely with $ \f u(0)= \f u_0$.
\end{enumerate}
\end{definition}

\begin{theorem}[Existence of energy-variational solutions to Navier--Stokes]\label{thm:exNav}
Let Assumption~\ref{Ass:1} be fulfilled as well as $\f {u}_0\in L^p ({\Omega},\mathcal{F}_0; L_{\sigma }^2\left(\D\right))$ 
     for some $p>4$.
    Then there exists an energy-variational solution in the sense of Definition~\ref{def:EnvarNav} such that $ E(0) =\frac{1}{2}\| \f u_0\|_{L^2(\mathcal{D})}^2 $ almost surely and 
    $$ \f u \in 
L^{p}_{w}(\Omega; L^\infty(0,T; L^2_{\sigma}(\D))\cap L^2(0,T;H^1_{\sigma}(\D))
        \cap L^{p/2}(\Omega;W^{\alpha,p/2}(0,T;(H^2(\D)\cap H^1_{0,\sigma}(\D))^*)) $$ 
        for any  
       $\alpha <\frac{1}{2}$ as well as 
       $$ E \in L^{p/2}(\Omega; \mathfrak{D}([0,T]))\,.$$
\end{theorem}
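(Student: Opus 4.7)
The plan is to extract the energy-variational solution directly from the probabilistically strong, measure-valued solution $(\f u, \mu)$ furnished by Theorem~\ref{thm:exMeasNav}, by setting
\[
E(t) := \frac{1}{2}\int_{\D}\langle \mu_{(\omega,t,x)}, |\lambda_{\f u}|^2\rangle\,\de x.
\]
Item \ref{def:NavenVar4} then follows from Jensen's inequality: since $\f u = \langle \mu, \lambda_{\f u}\rangle$ holds $\mu$-a.e., we have $|\f u|^2 \leq \langle \mu, |\lambda_{\f u}|^2\rangle$ pointwise, after passing to the c\`adl\`ag representative of $E$ supplied by Lemma~\ref{lem:invar} applied to \eqref{eq:defMEasinNav}. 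The regularity of $\f u$ and its weak continuity in $L^2_\sigma(\D)$ are inherited from Theorem~\ref{thm:exMeasNav} and Lemma~\ref{lem:weakcont}.

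For item~\ref{def:NavenVar5}, I would subtract the weak equation \eqref{eq:defMeasweakNav}, tested against a general process $\f\varphi$ in the sense of Definition~\ref{def:stoch}, from the energy inequality \eqref{eq:defMEasinNav}. The stochastic and It\^o-correction terms from both sides reorganise into exactly the right-hand side of \eqref{eq:envarNav}, using the orthogonality of $\sigma^1$ and $\sigma^2$ stipulated in Assumption~\ref{Ass:1}. The only term not yet in the desired form is the convective contribution $\int_{\D}\langle\mu,\lambda_{\f u}\otimes\lambda_{\f u}\rangle:\nabla\f\varphi\,\de x$. Introducing the defect $R := \langle\mu,\lambda_{\f u}\otimes\lambda_{\f u}\rangle - \f u\otimes\f u$, which is pointwise symmetric and positive semidefinite as the covariance of $\mu$, one has the linear-algebra bound $R:S \geq -\|S_{\sym,-}\|_{\mathrm{op}}\,\tr R$ for any symmetric $S$, applied to $S = (\nabla\f\varphi)_{\sym}$; integrating over $\D$ gives
\[
\int_{\D} R : \nabla \f\varphi\,\de x \;\geq\; 2\,\|(\nabla\f\varphi)_{\sym,-}\|_{L^\infty(\R^{d\times d})}\Big[\tfrac{1}{2}\int_{\D}|\f u|^2\,\de x - E\Big].
\]
Replacing $\int_{\D} R:\nabla\f\varphi$ in the combined inequality by this lower bound (and keeping $\int_{\D}\f u\otimes\f u:\nabla\f\varphi$) shrinks the LHS, so the inequality is preserved and the penalty term of \eqref{eq:envarNav} is produced exactly.

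The pointwise-in-$(s,t)$ form \eqref{eq:envarNav} with left-sided trace $E(s-)$ follows from a second application of Lemma~\ref{lem:invar}, which simultaneously selects the c\`adl\`ag representative in $\mathfrak{D}([0,T])$. The initial condition $E(0)=\tfrac12\|\f u_0\|_{L^2(\D)}^2$ combines the bound $E(0)\leq\tfrac12\|\f u_0\|^2$ from the last part of Lemma~\ref{lem:invar} with the converse bound $E(0)\geq\tfrac12\|\f u(0)\|^2 = \tfrac12\|\f u_0\|^2$ coming from item~\ref{def:NavenVar4} and from the fact that at $t=0$ the approximating Young measure is the Dirac mass at the strongly converging data $P_N\f u_0\to\f u_0$. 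The integrability $E\in L^{p/2}(\Omega;\mathfrak{D}([0,T]))$ is obtained by taking the $L^{p/2}(\Omega)$-norm of $\sup_t E(t)$ in the energy inequality and applying BDG together with Gr\"onwall, using $\f u\in L^p_w(\Omega;L^\infty(0,T;L^2_\sigma))$ and $p>4$. The main technical point I expect to be the real obstacle is the matrix step producing the sharp penalty constant $2\|(\nabla\f\varphi)_{\sym,-}\|_{L^\infty}$, in particular verifying that only the symmetric negative part of $\nabla\f\varphi$ enters and that the resulting multiplier is integrable under the regularity of test processes in Definition~\ref{def:stoch}; the remainder of the argument is a direct assembly of Theorem~\ref{thm:exMeasNav} with Lemmas~\ref{lem:invar} and~\ref{lem:weakcont}.
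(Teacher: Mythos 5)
Your proposal is correct and follows essentially the same route as the paper: define $E:=\tfrac12\int_{\D}\langle\mu,|\lambda_{\f u}|^2\rangle\,\de x$ from the measure-valued solution of Theorem~\ref{thm:exMeasNav}, get item~(i) by Jensen and the c\`adl\`ag representative via Lemma~\ref{lem:invar}, and obtain \eqref{eq:envarNav} by subtracting \eqref{eq:defMeasweakNav} from \eqref{eq:defMEasinNav} together with the covariance decomposition $\langle\mu,\lambda_{\f u}\otimes\lambda_{\f u}\rangle=\f u\otimes\f u+\langle\mu,(\lambda_{\f u}-\f u)\otimes(\lambda_{\f u}-\f u)\rangle$ and the bound $R:S\geq-\|S_{\sym,-}\|\,\tr R$ for the positive semidefinite defect, which is exactly the paper's estimate producing the penalty constant $2\|(\nabla\f\varphi)_{\sym,-}\|_{L^\infty}$. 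The matrix step you flag as the main obstacle is indeed the crux and your version of it is the one the paper uses; your additional remarks on $E(0)$ and the $L^{p/2}$-integrability of $E$ are consistent with what Theorem~\ref{thm:exMeasNav} already provides.
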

\begin{proof}[Proof of Theorem~\ref{thm:exNav}]
We define $ E : = \frac{1}{2}\int_{\D} \langle \mu , |\lambda_{\f u}|^2 \rangle \de x$. 
From this definition, inequality~\eqref{eq:defMEasinNav}, and Lemma~\ref{lem:invar}, we may find a representative of $E\in L^{p/2}(\Omega;\mathfrak{D}([0,T])) $. Definition \ref{def:EnvarNav}, \ref{def:NavenVar4} is a direct consequence of Jensen's inequality. 
For the second term in~\eqref{eq:defMeasweakNav}, we observe that
\begin{align*}
\int_{\D} \langle\mu, \lambda_{\f u}\otimes \lambda_{\f u} :\nabla \varphi\rangle  \de x ={}& \int_{\D} {\f u}\otimes {\f u} :\nabla \varphi \de x + \int_{\D} \langle\mu, (\lambda_{\f u}- \f u )\otimes (\lambda_{\f u}- \f u)  :\nabla \varphi \rangle \de x \\
\geq{}& \int_{\D} {\f u}\otimes {\f u} :\nabla \varphi \de x - \int_{\D} \langle\mu, |\lambda_{\f u}- \f u |^2 \rangle \de x \| (\nabla \varphi )_{\sym,-} \|_{L^\infty(\R^{d\times d})} 
\\
={}& \int_{\D} {\f u}\otimes {\f u} :\nabla \varphi \de x - \left[ \int_{\D} \langle\mu, |\lambda_{\f u} |^2 \rangle - | \f u|^2 \de x \right] \| (\nabla \varphi )_{\sym,-} \|_{L^\infty(\R^{d\times d})} \\
={}& \int_{\D} {\f u}\otimes {\f u} :\nabla \varphi \de x + \left[ \frac{1}{2}\int_{\D} |\f u|^2 \de x - E  \right] 2  \| (\nabla \varphi )_{\sym,-} \|_{L^\infty(\R^{d\times d})}\,.
\end{align*}
Subtracting~\eqref{eq:defMeasweakNav} from~\eqref{eq:defMEasinNav} and inserting the previous inequality immediately implies~\eqref{eq:envarNav}. 
First the two inequalities only hold a.e. in time, but from the additional time-regularity of $E$, we may infer that they hold in the sense given in point~\ref{def:NavenVar5} of Definition~\ref{def:EnvarNav}. 
\end{proof}

\section{Energy-variational solutions for the stochastic incompressible Euler equations\label{sec:4}}
Now, we can also introduce the concept of energy-variational solutions for the stochastic incompressible Euler equations in order to identify the limit of vanishing viscosity for solutions to the Navier--Stokes equations. This gives the first existence result of probabilistically strong solutions for any energy-finite initial value for the incompressible Euler equations and presents the last main novelty of this work.

\begin{definition}[Energy-variational for the stochastc Euler equations]\label{def:Envar}
Let $\f u_0 \in L^2(\Omega,\mathcal{F}_0;L^2_\sigma(\D))$. An $(\mathcal{F}_t)$-progressively measurable stochastic process 
\[
(\f{u},E) \in L^2\left( \Omega;L^\infty (0,T; L^{2}_\sigma(D))
\right) \times  L^{1}(\Omega;\mathfrak{D}([0,T]))
\]
such that $\f u \in \C_w([0,T];L^2_\sigma(\D))$ $\mathbb P$-almost surely
is called an \emph{energy-variational solution} to the incompressible Euler equations with noise, \textit{i.e.,}~\eqref{eq:stochastic_euler_additive} with $\nu=0$, if:

\begin{enumerate}[label=\roman*)] 
    \item $ E(t) \geq \frac{1}{2}\int_{\D}| \f{u}(x,t)|^2  d x $ for  every $ t\in [0,T]$, \( \mathbb{P} \)-almost surely,
    \item For all  test processes  \( \boldsymbol{\varphi} \) in the sense of Definition~\ref{def:stoch},
     the  energy-variational inequality 
\begin{equation}\label{eq:envar}
            \begin{aligned}
     \left[ E  - \int_{\D} \f{u} \cdot \f{\varphi} \, \de x \right ] \Big|_{s-}^{t}&+  \int_s^t 
    \int_{\D} \left[ \f{u} \otimes \f{u} \right] : \nabla \f{\varphi}  +\f u \cdot A   + \frac{1}{2}[\mathcal{P}(\sigma^2\cdot\nabla )]^2 \f \varphi \cdot \f u  \, \de x \, \de \tau  \\ &+\int_s^t  2  \| (\nabla \f{\varphi})_{\mathrm{sym},-} \|_{L^\infty(D;\mathbb R^{d\times d})} \left[ \frac{1}{2}\int_{\D} | \f{u}|^2 - E  \right] \, \de 
    \tau\\
       \leq{}& \int_s^t   \int_{\D}   (\f u - \f \varphi )\cdot \sigma^1 + (\sigma^2\cdot \nabla)\f \varphi \cdot \f u -  B\cdot  \f u \de x\, \de W({\tau}) \\&+ \int_s^t \frac{1}{2}
 \|\sigma^1\|_{L_2(\mathfrak{U}, L^2_\sigma(\D))}^2  
\de 
\tau \\&- \int_s^t \Tr{ \int_{\D} \sigma^1 \cdot B  -  \f u \cdot (\sigma^2\cdot \nabla) B  \de x}  \de \tau    \end{aligned}
    \end{equation}
       holds  for all $s<t \in [0,T]$ \( \mathbb{P} \)-almost surely with $ \f u(0)= \f u_0$. 
\end{enumerate}
\end{definition}
\begin{remark}[Regularity in time]\label{rem:time}
    We note that the point-wise in time inequality~\eqref{eq:envar} can also be written as an integrated inequality in time, via Lemma~\ref{lem:invar}. 
    The auxiliary variable $E$ is a c\`{a}dl\`{a}g function, $E
    \in 
    \mathfrak{D}([0,T])$, due to the energy inequality. Indeed, from~\eqref{eq:envar} with $\f \varphi =0$, we find  that the function
    \begin{align*}
        E\Big|_{s-}^{t} &- \int_s^t\int_{\D}\f u \cdot \sigma^1 \de x \de W({\tau}) - \frac{1}{2}\int_s^t \| \sigma^1\|_{L^2(\mathfrak{U},L^2_\sigma)}^2 \de \tau 
        \\
        &= \left[ E - \int_0^{\cdot}\int_{\D}\f u \cdot \sigma^1 \de x \de W(\tau) - \frac{1}{2}\int_0^\cdot \| \sigma^1\|_{L^2(\mathfrak{U},L^2_\sigma)}^2 \de \tau \right]\Bigg|_{s-}^t
    \end{align*}
    is monotone, and, thus, a $\mathrm{BV}$- function, which has a c\`{a}dl\`{a}g representation. As the integral  $$ t \mapsto \int_0^t\int_{\D}\f u \cdot \sigma^1 \de x \de W({s}) + \frac{1}{2}\int_0^t \| \sigma^1\|_{L^2(\mathfrak{U},L^2_\sigma)}^2 \de s $$ is continuous, also $ E$ is a c\`{a}dl\`{a}g function.
\end{remark}

\begin{theorem}[Energy-variational solutions to Euler]\label{thm:ex}
Let Assumption~\ref{Ass:1} be fulfilled as well as $\f {u}_0\in L^p ({\Omega},\mathcal{F}_0; L_{\sigma }^2\left(\D\right))$
    for some $p>4$.
    Then there exists an energy-variational solution in the sense of Definition~\ref{def:Envar} such that
    $ E(0) =\frac{1}{2}\| \f u_0\|_{L^2(\mathcal{D})}^2 $, $\mathbb P$-almost surely, and, in addition, 
\begin{equation}\label{reg:envarEul}
\begin{aligned}
    \f u  &\in  L^p_{w^*}(\Omega; L^\infty(0,T; L^2_{\sigma}(\D))) \cap L^{p/2}(\Omega; W^{\alpha,p/2}(0,T;(W^{2,r}(\D)\cap H^1_{0,\sigma}(\D))^*)\,, 
\\
E & \in 
   L^{p/2}(\Omega;\mathfrak{D}([0,T]))\,.
\end{aligned}    
\end{equation}
    Moreover, for any sequence $ \{ \f u_{\nu}\} $ such that $ \f u_{\nu}$ is an energy-variational solution for the stochastic Navier--Stokes equations~\eqref{eq:stochastic_euler_additive} with $\nu>0$ in the sense of Definition~\ref{def:EnvarNav} such that 
    \begin{align*}
     \f u_\nu  &\stackrel{*}{\rightharpoonup} \f u \quad && \text{in } L^p_{w^*}(\Omega; L^\infty(0,T; L^2_{\sigma}(\D)))\,,\\
   \f u_\nu &\stackrel{*}{\rightharpoonup} \f u \quad&& \text{in }L^{p/2}(\Omega; W^{\alpha,p/2}(0,T;(W^{2,r}(\D)\cap H^1_{0,\sigma}(\D))^*)) \text{ for any }r>d,
     \\ E^\nu &\stackrel{*}{\rightharpoonup} E \quad && \text{in } L^{p/2}_{w^*}(\Omega; L^\infty(0,T))\,. 
\end{align*}
the limit $(\f u, E)$ will be an energy-variational solution in the sense of Definition~\ref{def:Envar}
enjoying the additional regularity 
\eqref{reg:envarEul}. 
\end{theorem}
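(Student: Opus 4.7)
The plan is to establish the convergence statement first and then deduce existence from it by combining with Theorem~\ref{thm:exNav} and a diagonal extraction. The decisive structural fact that makes both parts tractable is that the energy-variational inequality is \emph{stable under weak convergence}: the only nonlinear contribution in $\f u$, namely $\int_\D \f u\otimes\f u:\nabla\f\varphi\,\de x$, is paired in \eqref{eq:envarNav} with the slack $\tfrac{1}{2}\int|\f u|^2-E$ and the weight $2\|(\nabla\f\varphi)_{\sym,-}\|_{L^\infty(\D;\R^{d\times d})}$, and their combination becomes a nonnegative convex functional of $\f u$ via the elementary identity
\begin{equation*}
\int_\D \f v\otimes\f v:\nabla\f\varphi\,\de x + \|(\nabla\f\varphi)_{\sym,-}\|_{L^\infty}\int_\D|\f v|^2\,\de x = \int_\D \f v\cdot\bigl[(\nabla\f\varphi)_{\sym}+\|(\nabla\f\varphi)_{\sym,-}\|_{L^\infty}I\bigr]\f v\,\de x\ge 0,
\end{equation*}
the bracketed matrix being pointwise positive semidefinite by construction of the weight.

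To prove the convergence, I first recast the pointwise inequality~\eqref{eq:envarNav} for $\f u_\nu$ into the distributional form of Lemma~\ref{lem:invar} by testing against $\phi\in\C^1_c([0,T);[0,\infty))$ and $\psi\in L^\infty(\Omega;[0,\infty))$. All terms linear in $\f u_\nu$ or $E^\nu$ (the boundary pairing $\int\f u\cdot\f\varphi\,\de x$, the drift $\f u\cdot A$, the Stratonovich--Itô correction $\tfrac{1}{2}[\mathcal P(\sigma^2\cdot\nabla)]^2\f\varphi\cdot\f u$, the two stochastic integrals, and the trace correction) pass to the limit directly from the hypothesized weak and weak-$*$ convergences, since the fixed data $\f\varphi,A,B,\sigma^1,\sigma^2$ provide the required dual test elements in their respective preduals. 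The viscous contributions disappear: the term $\nu\int\int|\nabla\f u_\nu|^2\geq 0$ is simply dropped from the left-hand side (which only weakens the inequality), while $\nu\int\int\nabla\f u_\nu:\nabla\f\varphi\to 0$ because the uniform bound on $\sqrt\nu\,\nabla\f u_\nu$ in $L^2$ forces $\nu\,\nabla\f u_\nu\to 0$ strongly in $L^2(\Omega\times(0,T)\times\D)$.

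The key step is the convective-plus-slack block. Tested against the nonnegative weight $\psi(\omega)\phi(\tau)$, the combined expression
\begin{equation*}
\expect{\psi\int_0^T\phi\left(\int_\D \f u_\nu\otimes\f u_\nu:\nabla\f\varphi\,\de x + \|(\nabla\f\varphi)_{\sym,-}\|_{L^\infty}\int_\D|\f u_\nu|^2\,\de x\right)\de\tau}
\end{equation*}
is a nonnegative, convex, strongly continuous functional of $\f u_\nu\in L^2(\Omega\times(0,T)\times\D)$, hence weakly lower semicontinuous, so taking $\liminf_\nu$ preserves the inequality with $\f u$ in place of $\f u_\nu$. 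The residual linear-in-$E^\nu$ piece $-2\|(\nabla\f\varphi)_{\sym,-}\|_{L^\infty}\cdot E^\nu$ converges by the assumed weak-$*$ convergence of $E^\nu$ in $L^{p/2}_{w^*}(\Omega;L^\infty(0,T))$, since the bounded measurable density $\phi\psi\|(\nabla\f\varphi)_{\sym,-}\|_{L^\infty}$ lies in the predual $L^{(p/2)'}(\Omega;L^1(0,T))$. Applying Lemma~\ref{lem:invar} in the reverse direction then delivers~\eqref{eq:envar} pointwise in $s<t$, and Remark~\ref{rem:time} fixes the càdlàg representative of $E$.

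Existence is then obtained by feeding into this convergence result the sequence of Navier--Stokes energy-variational solutions furnished by Theorem~\ref{thm:exNav}. The uniform bounds on $\f u_\nu\in L^p_{w^*}(\Omega;L^\infty(0,T;L^2_\sigma))$ and on $E^\nu\in L^{p/2}(\Omega;\mathfrak D([0,T]))$ follow $\nu$-independently from the Navier--Stokes energy inequality, since the $\nu$-dissipation appears only with a favourable sign. The fractional time regularity in $W^{\alpha,p/2}(0,T;(W^{2,r}\cap H^1_{0,\sigma})^*)$ with $r>d$ is derived from the evolution equation by testing against $W^{2,r}$-regular fields along the same stochastic Fubini lines as in the proof of Theorem~\ref{thm:exMeasNav}. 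The main obstacle is precisely this last bound: the $L^2(0,T;H^1)$ parabolic smoothing is lost as $\nu\to 0$, so the convective term $\f u_\nu\otimes\f u_\nu$ is only controlled in $L^\infty(0,T;L^1)\subset L^\infty(0,T;(W^{1,r})^*)$ uniformly in $\nu$; this forces the weaker target space $(W^{2,r})^*$ with $r>d$ (rather than $(H^2)^*$) so that the Sobolev embedding $W^{1,r}\hookrightarrow L^\infty$ remains available to close the estimate.
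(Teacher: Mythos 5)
Your proposal is correct and follows essentially the same route as the paper: uniform bounds from the Navier--Stokes energy-variational inequality, time regularity in $(W^{2,r}(\D)\cap H^1_{0,\sigma}(\D))^*$ with $r>d$ forced by the mere $L^\infty(0,T;L^1(\D))$ control of $\f u_\nu\otimes\f u_\nu$, and passage to the limit by recognizing the convective-plus-slack block as a nonnegative convex quadratic functional (your explicit positive-semidefiniteness identity is exactly the mechanism the paper invokes when it asserts convexity and lower semicontinuity), with Lemma~\ref{lem:invar} and Remark~\ref{rem:time} recovering the pointwise-in-time form. The only cosmetic difference is that the paper obtains the fractional time estimate not from an evolution equation (energy-variational solutions satisfy none) but by extracting the linear part of~\eqref{eq:envarNav} via the rescaling $\f\varphi=\alpha\f\psi$, $\alpha\nearrow\infty$, which is the precise substitute for your ``testing the equation'' step.
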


\begin{proof}[Proof of Theorem~\ref{thm:ex}]
    We structure the  proof of existence of energy-variational solutions to the incompressible Euler equations in different steps. As already mentioned, we consider the Navier--Stokes equations~\eqref{eq:stochastic_euler_additive} with $\nu>0$ as an approximate system and the existence of energy-variational solutions to the Navier--Stokes equations in the sense of Definition~\ref{def:EnvarNav} was already proven in Theorem~\ref{thm:exNav}.

\textit{Step 1: a priori estimates.}
Choosing $s=0$ and $\varphi\equiv 0$ in~\eqref{eq:envarNav}, we infer 
\begin{equation} \label{eq:ito_energy_eul}
\begin{aligned}
E^\nu(t)  &+\nu\int_0^t \|\nabla \f u_\nu(s)\|_{L^2(\D)}^2\de s 
- \frac{1}{2}\|\f u_0 \|_{L^2(\D)}^2 \\
&=  \int_0^t \int_{\D} \sigma^1  \,  \f u_\nu(s) \de x   \,\de W(s)  
 + \frac{1}{2}\int_0^t \|\sigma^1\|_{L_2(\mathfrak{U}, L^2_\sigma(\D))}^2 \, \de s,
\end{aligned}
\end{equation}
for all $t\in(0,T]$ and $\mathbb P$ almost surely. 

For \( p > 4 \) we take $p/2$-th power and expectations in order to infer
\begin{equation*}
\begin{aligned}
\mathbb{E} &\left[ \sup_{t \in [0,T]} E_{\nu} ^{p/2}
\right] + \expect{\nu \|\nabla \f u_\nu \|_{L^2(\D\times (0,T))}^p }
\\&\leq C \left( \expect{\|\f u_0\|_{L^2(\D)}^p }
+ \mathbb{E} \left[ \sup_{t \in [0,T]}  \left( \int_0^t \int_{\D} \sigma^1 \,  \f u_\nu(s) \de x \de W(s) \right)^{p/2} \right] \right. \\
&\qquad \left. + \mathbb{E} \left[ \left( \int_0^T \|\sigma^1\|_{L_2(\mathfrak{U}, L^2(\D))}^2 \, \de s \right)^{p/2} \right] \right)
\end{aligned}
\end{equation*}
 and by the BDG inequality:
 \begin{align*}
\mathbb{E} \left[ \sup_{t \in [0,T]} \left( \int_0^t \int_{\D} \sigma^1  \f u_\nu(s)\de x \de W(s)  \right)^{p/2} \right]
\leq{}& C_p 
\, \mathbb{E} \left[ \left( \int_0^T  \Tr{\left( \int_{\D} \sigma^1 \cdot \f u_\nu \de x \right)^ 2} \, ds \right)^{p/4} \right]
\\
\leq{}&C_p \, \|\sigma^1\|_{L_2(\mathfrak{U}, L^2(\D))}^{p/2} 
\, \mathbb{E} \left[ \left( \int_0^T \|\f u_\nu (s)\|_{L^2(\D)}^2 \, ds \right)^{p/4} \right]
.
\end{align*}
Using Young's inequality and Gronwall-type arguments and~\ref{def:NavenVar4} of Definition~\ref{def:EnvarNav}, one then obtains
\begin{multline}\label{aprioriEuler}
\mathbb{E} \left[ \sup_{t \in [0,T]} \|\f u_\nu(t)\|_{L^2(\D)}^p \right] + \expect{\sup_{t \in [0,T]} E_{\nu}(t)^{p/2}}+ \nu\expect{ \|\nabla \f u_\nu \|_{L^2(\D\times (0,T))}^p }
\\\leq C(p,T, \|u_0\|_{L^2(\D)}, \|\sigma^1\|_{L_2(\mathfrak{U}, L^2(\D))}),
\end{multline}
with a constant \( C \) independent of \( \nu \).

Choosing $\f \varphi = \alpha \psi$ for $\psi  \in C^2(\D)\cap H^1_{0,\sigma}(\D)$ in~\eqref{eq:envarNav} and multiplying the resulting limit by $\frac{1}{\alpha}$ implies after taking the limit $\alpha \nearrow \infty $ that only the linear terms in $\f \varphi$ remain in~\eqref{eq:envarNav}. Note that  $A\equiv 0 \equiv B$, leads to 
\begin{equation*}
            \begin{aligned}
      -\int_{\D} \f{u}_\nu(t)-\f u_\nu (s) \cdot \psi  \, \de x  \leq{}& -    \int_s^t 
    \int_{\D} \nu \nabla \f u_\nu :  \nabla \psi - \left[ \f{u}_\nu \otimes \f{u}_\nu \right] : \nabla \psi \, dx \, \de \tau  \\ &-\int_s^t 
    2 \|(\nabla \psi)_{\sym,-}\|_{L^\infty(\R^{d\times d})}\left[ \frac{1}{2}\int_{\D} | \f{u}_\nu|^2 - E  \right]  \de \tau \\ &- \int_s^t  \int_{\Omega}  \frac{1}{2}[\mathcal{P}(\sigma^2 \cdot \nabla )]^2 \psi \cdot \f u _\nu  \de x \, \de 
    \tau\\
       &- \int_s^t   \int_{\D}     \psi \cdot \sigma^1 - \f u _\nu\cdot \mathcal{P}(\sigma^2 \cdot \nabla ) \psi  \de x\, \de W({\tau}).  
\end{aligned}
    \end{equation*}
Estimating the right-hand side using Fubini~\cite[Thm.~4.18]{DaPratoZabczyk}, we find 
\begin{align*}
\begin{split}
        \int_{\D}& (\f u_\nu (t)-\f u_\nu (s)) \cdot \psi  \de x\\
\leq{}& \int_s^t \| \f u_\nu \|_{L^2(\D)}^2 \|\nabla \psi  \|_{L^\infty(\D)} + 
    2 \|(\nabla \psi)_{\sym,-}\|_{L^\infty(\R^{d\times d})}\left[ E_\nu -\frac{1}{2}\int_{\D} | \f{u}_\nu|^2 \right]  \de \tau \\&+\int_s^t\left( \nu +\frac{1}{2}\| \sigma^2\|_{L^2(\mathfrak{U}; H^2(\D))}^2 \right)  \| \f u_\nu \|_{L^2(\D)}\|\psi \|_{H^2(\D)} \de \tau 
\\ 
& +   \left \|\int_s^t\sigma^1\de W \right \|_{ L^2(\D)} \| \psi \|_{L^2(\D) } + 
\left \| \int_s^t\f u_\nu \otimes \sigma^2\de W \right \|_{L^2(\D)} \| \nabla \psi  \|_{L^2(\D) } \,.
\end{split}
\end{align*}
This allows to estimate the dual norm of $W^{2,r}(\D)\cap H^1_{0,\sigma}(\D) $ for $r>d$ as $W^{2,r}(\D) $   is embedded into $W^{1,\infty}(\D)$, leading to 
\begin{align}\label{eq:timeestEul}
\begin{split}
     \| \f u_\nu (t)&-\f u_\nu (s)\|_{(W^{2,r}(\D)\cap H^1_{0,\sigma}(\D))^*}\\
\leq{}& C 
\left(\| \f u_\nu \|_{L^\infty(0,T;L^2(\D))}^2 + \| E _\nu\|_{L^\infty(0,T)} \right) (t-s)
     \\&+C\left( \nu +\frac{1}{2}\| \sigma^2\|_{L^2(\mathfrak{U}; H^2(\D))}^2 \right)  \| \f u_\nu \|_{L^\infty(0,T;L^2(\D))}(t-s)
\\ 
& + C\left(  \left \|\int_s^t\sigma^1\de W \right \|_{ L^2(\D)}  +
\left \| \int_s^t\f u _\nu\otimes \sigma^2\de W \right \|_{L^2(\D)} \right)  \,.
\end{split}
\end{align}
Using the fact that the stochastic integrals are known to be bounded in a Sobolev--Slobodeckij space~\cite[Lem.~2.1]{FlandoliGatarek}, \textit{i.e.,}
$$
\begin{aligned}
    \left\| \int_0^\cdot \sigma ^1  \de W \right\|_{L^q(\Omega;W^{\alpha,q}(0,T; L^2_{\sigma}(\D)))}^q &\leq  C 
\left \| \sigma^1 \right\|_{
L^2(\mathfrak{U};L^2_\sigma(\D))}^q\,,
\\
 \left\| \int_0^\cdot
  \f u_\nu \otimes 
 \sigma ^2  \de W \right\|_{L^q(\Omega;W^{\alpha,q}(0,T; L^2(\D)))}^q 
& \leq  C \expect{\int_0^T\| \f u_\nu \otimes \sigma^2 \|_{L^2(\D)}^q \de t  }\\
 &\leq  C\| \f u_\nu\|_{L^q(\Omega; L^q(0,T;L^2_{\sigma}(\D)))}^q
 \left \| \sigma^2 \right\|^q_{
L^2(\mathfrak{U};H^2(\D))} 
\end{aligned}
$$
for $\alpha < 1/2$ and any $q\in [2,\infty)$. 
Dividing~\eqref{eq:timeestEul} by $|t-s|^{1/q+\alpha}$, taking the $q$-th power, integrating over $t $ and $s$,
and taking 
expectations let us conclude that there exists a constant $C>0$ such that
$$    \| \f u_\nu \|_{L^{p/2}(\Omega; W^{\alpha,p/2}(0,T;(W^{2,r}(\D)\cap H^1_{0,\sigma}(\D))^*))} \\ \leq  C (\|\f u _\nu\|_{L^{p}(\Omega;L^\infty(0,T;L^2_{\sigma}(\D)))}^2+1)  \,
$$
for any $\alpha < 1/2 $, any $r>d$. 
The right-hand side is bounded due to~\eqref{aprioriEuler}. 
We note that from~\cite[Thm.~2]{FlandoliGatarek}, we infer by the continuous embedding of $W^{\alpha,p/2}(0,T;(W^{2,r}(\D)\cap H^1_{0,\sigma}(\D))^*)$ into $\C([0,T];(W^{2,r}(\D)\cap H^1_{0,\sigma}(\D))^*))$ for $\alpha p>2 $.

\textit{Step 2: Convergence.} 
From the \textit{a priori} estimates, we may extract a converging subsequence, which we do not relabel such that 
\begin{align}
     \f u_\nu  &\stackrel{*}{\rightharpoonup} \f u \quad && \text{in } L^p_{w^*}(\Omega; L^\infty(0,T; L^2_{\sigma}(\D)))\,,\label{weak:convnu}\\
\f u_\nu &\stackrel{*}{\rightharpoonup} \f u \quad&& \text{in }L^{p/2}(\Omega; W^{\alpha,p/2}(0,T;(W^{2,r}(\D)\cap H^1_{0,\sigma}(\D))^*)),\label{weak:convnut}
     \\
    E^\nu &\stackrel{*}{\rightharpoonup} E \quad && \text{in } L^{p/2}_{w^*}(\Omega; L^\infty(0,T))\,. \label{weak:convEnu}
\end{align}

Observing that $ \nu |\nabla \f u_\nu|^2 \geq 0$, we infer from inequality~\eqref{eq:envarNav}
 and Lemma~\ref{lem:invar} that 
   \begin{align*}
   &\expectp{ -\int_0^T \partial_t\phi  \left[ E_\nu  - \int_{\D} {\f u_\nu} \cdot  \f{\varphi} \, \de x \right ] \, \de t - \phi(0) \left[ E_{\nu}(0) -  \int_{\D} \f {u}_0 \cdot  \f{\varphi}(0) \, \de x\right] }\\&+ \expectp{ \int_0^T \phi 
    \int_{\D} \left[   \f{u}_\nu \otimes  \f {u}_\nu \right] : \nabla  \f{\varphi}- \nu \nabla \f u_\nu :\nabla \f\varphi + A \cdot \f u_\nu +\frac{1}{2}\f u_\nu \cdot [\mathcal{P}( \sigma^2\cdot\nabla)]^2\f \varphi \, \de x  \, \de t } \\& +\expectp{\int_0^T \phi 
     2  \| (\nabla  \f{\varphi})_{\mathrm{sym},-} \|_{L^\infty(D;\mathbb R^{d\times d})} 
    \left[ \frac{1}{2}\int_{\D} |   {\f u_\nu}|^2 - E_\nu   \right] \, \de t}\\
   & \leq{} \expectp{\int_0^T \phi  \int_{\D}  (   \f u_\nu -  \f\varphi )\cdot \sigma ^1 + ( \sigma^2 \cdot \nabla )\f\varphi\cdot \f u_\nu   -  \f u_\nu  \cdot B \de x\, \de W(t)} \\&\quad + \expectp{\int_0^T \phi \frac{1}{2}
 \|\sigma^1 \|_{L_2(\mathfrak{U}, L^2_\sigma(\D))}^2  - \phi \Tr{ \int_{\D} \sigma^1 B  - \f u_\nu \cdot ( \sigma ^2 \cdot \nabla) B \de x } \de t }
    \end{align*}
for all test processes $\f\varphi $ in the sense of Definition~\ref{def:stoch}, for all $ \phi \in  \C^1_c([0,T);[0,\infty)])$, and for all $\psi \in L^\infty(\Omega;[0,\infty))$. 

We next verify that we may pass to the limit using the convergences~\eqref{weak:convnu} and~\eqref{weak:convEnu}. Indeed, we observe that the function
$$
\f u_\nu  \mapsto \expectp{ \int_0^T \phi\left[ \int _{\D} \f u_\nu \otimes \f u_\nu : \nabla \f\varphi \de x + 2  \| (\nabla  \f{\varphi})_{\mathrm{sym},-} \|_{L^\infty(D;\mathbb R^{d\times d})}\frac{1}{2}\| \f u_\nu \|_{L^2(\D)}\right] \de t}
$$
is a convex and lower semicontinuous mapping on $ L^2 (\Omega\times (0,T)\times \D) $ as it is a quadratic nonnegative function. Therefore, we may pass to the limit using~\eqref{weak:convnu} 
in order to infer
\begin{align*}
    \liminf_{\nu\to 0 } \expectp{ \int_0^T \phi\left[ \int _{\D} \f u_\nu \otimes \f u_\nu : \nabla \f\varphi \de x \de x +2  \| (\nabla  \f{\varphi})_{\mathrm{sym},-} \|_{L^\infty(D;\mathbb R^{d\times d})} \frac{1}{2}\| \f u_\nu \|_{L^2(\D)}^2\right] \de t} \\
    \geq \expectp{ \int_0^T \phi\left[ \int _{\D} \f u \otimes \f u : \nabla \f\varphi \de x \de x +2  \| (\nabla  \f{\varphi})_{\mathrm{sym},-} \|_{L^\infty(D;\mathbb R^{d\times d})} \frac{1}{2}\|\f  u \|_{L^2(\D)}^2\right] \de t}\,. 
\end{align*}
Moreover, for the last remaining  term incorporating $\nu$, we find
$$
\expectp{\Big | \int_0^T \phi (t) 
    \int_{\D}  \nu \nabla \f u_\nu :\nabla \f\varphi\, dx \, \de t \Big|} \leq C\sqrt{\nu} \sqrt{\left( \nu \| \nabla \f u_\nu\|_{L^2(\D\times (0,T))}^2 \right)} \| \nabla\f \varphi\|_{L^2(\D\times (0,T))} \to 0 
$$
as $\nu \to 0$. 
All other terms appear linearly such that the convergences~\eqref{weak:convnu}--\eqref{weak:convEnu} suffice to pass to the limit so that
\begin{align*}
    -&\expectp{\int_0^T \partial_t \phi\left[ E - \int_{\D}\f  u \cdot \f \varphi \de x \right]\de t } + \expectp{ \int_0^T \phi \int _{\D} \f u \otimes \f u : \nabla \f \varphi  \de x \de t} \\
    +&\expectp{\int_0^T\phi  2  \| (\nabla  \f{\varphi})_{\mathrm{sym},-} \|_{L^\infty(D;\mathbb R^{d\times d})} \left[\frac{1}{2}\| \f u \|_{L^2(\D)}- E\right] \de t} \\
    +&\expectp{ \int_0^T \phi \int_{\D} A \cdot \f u +\frac{1}{2}[\mathcal{P}(\sigma^2 \cdot \nabla) ]^2 \f\varphi \cdot \f u \de x \de t }
    \\
    -&\expectp{  \phi(0) \left[ E (0) - \int_{\D} \f u_0  \cdot \f\varphi \de x\right]  } 
    \\
&\leq \expectp{\int_0^T \phi \int_{\D} \sigma^1 \cdot(  \f u- \f\varphi )+ (
\sigma^2 \cdot \nabla) \f\varphi \cdot \f u -B \cdot \f u  \de x   \de W (t) } \\
&\quad+ \expectp{\int_0^T \phi \|\sigma^1\|_{L_2(\mathfrak{U}, L^2_\sigma(\D))}^2 - \phi \Tr{ \int_{\D} ( \sigma^1 B - \f u \cdot (\sigma^2 \cdot \nabla) B)  \de x} \, \de t }\,.
\end{align*}
for all test processes $\f \varphi $ in the sense of Definition~\ref{def:stoch}, all $ \phi \in  \C^1_c([0,T);[0,\infty)])$, and all $\psi \in L^\infty(\Omega;[0,\infty))$.
Lemma~\ref{lem:invar} allows to deduce~\eqref{eq:envar}. First this holds only almost everywhere in time, but from the time regularity of Remark~\ref{rem:time}, we first infer that $E\in \mathfrak{D}([0,T])$. 
Moreover from Lemma~\ref{lem:weakcont}, we infer that
 $ \f u \in \C_w([0,T]; L^2_\sigma(\D))$ $\mathbb P$-almost surely such that~\eqref{eq:envar} holds  everywhere in $[0,T]$ $\mathbb P$-almost surely. 
 \end{proof}

\begin{definition}[Dissipative weak solution]\label{def:Dissweak}
Let $\f u_0 \in L^2(\Omega;\mathcal{F}_0;L^2_\sigma(
\D))$. 
    A $(\mathcal{F}_t)$-progressively measurable stochastic process 
\[
(\f{u},E
) \in L^2\left( \Omega; L^\infty(0,T; L^2_\sigma(\D)) \right) \times  L^{1}(\Omega; \mathfrak{D}([0,T]))
\]
such that $ \f u \in \C_w([0,T;L^2_\sigma(\D))$ $\mathbb P$-almost surely 
is called a \emph{dissipative weak solution} to the incompressible Euler equations with  noise, \textit{i.e.,}~\eqref{eq:stochastic_euler_additive} with $\nu=0$, if:
 there exists an $(\mathcal{F}_t)$-progressively measurable  $ \mathfrak{R}\in  L^{1}_{w} (\Omega;  L^\infty_{w^*}( 0,T; \mathcal{M}({D}; \R^{d\times d}_{\sym,+})))$ 
 such that:
\begin{enumerate}[label=\roman*)]
     \item $ E(t) \geq \frac{1}{2}\int_{\D}| \f{u}(x,t)|^2  d x + \frac{1}{2}\int_{\D}\de \tr{(\mathfrak{R}(t))}$ for almost   every $ t\in (0,T)$, \( \mathbb{P} \)-almost surely,\label{def:dissweak4}
    \item \label{eq:weakdisseq}For all  \( {\varphi} \in \C^2_{0,\sigma}(\ov\D;\R^d) 
\)  
    the following weak formulation holds  for all $t\in (0,T)$, \( \mathbb{P} \)-almost surely,
    \begin{equation}\label{eq:defdissweak}
            \begin{aligned}
       -  \int_{\D} (\f{u}(t)
- \f{u}_0)
\cdot {\varphi}
\, \de x 
 &+  
    \int_0^t\int_{\D} \left[ \nabla {\varphi}:  \left[ \f{u}
    \otimes \f{u}
    \right] 
    +  \frac{1}{2}[\mathcal P(\sigma^2 \cdot \nabla) ]^2 \varphi \cdot \f u \right] \de x \,\de s  \\&+  
       \int_0^t  \int_{\D} \left[ {\varphi}\cdot  \sigma^1    -  \nabla  \varphi:  \f u \otimes   \sigma^2 \right]
        \de x \,\de W (s)
       \\  
 &+  \int_0^t\int_{\overline{\D}}  (\nabla \varphi)_{\sym} \de s: \de \mathfrak{R}(x)\de s = 0 \,,
    \end{aligned}
        \end{equation}
    \item the energy inequality 
    \begin{equation}\label{eq:defDissenin}
           E \Big|_{s-}^{t}  
   \leq {} \int_s^t  \int_{\D}  \f u \cdot \sigma^1 \de x  \de W({\tau}) + \frac{1}{2}\int_s^t
 \|\sigma^1\|_{L_2(\mathfrak{U}, L^2_\sigma(\D))}^2  
  \de \tau 
    \end{equation}
    holds for all  $T\geq t>s\geq 0$,  \( \mathbb{P} \)-almost surely.
       \end{enumerate}
\end{definition}

\begin{theorem}[Energy-variational imply dissipative weak solutions]\label{thm:equi}
    Let $(\f u,E)$ be an energy-var\-ia\-tional solution in the sense of Definition~\ref{def:Envar} with $ \f u \in L^4(\Omega;L^\infty(0,T;L^2_\sigma(\D)))$ and $E\in L^{2}(\Omega;\mathfrak{D}([0,T]))$. Then, there exists a Reynolds-stress $\mathfrak{R}\in L^{2
}_{w} (\Omega;  L^\infty_{w^*}( 0,T; \mathcal{M}({D}; \R^{d\times d}_{\sym,+})))$ such that $ (\f u, E)$ is a dissipative weak solution with Reynolds defect~$\mathfrak{R}$ in the sense of Definition~\ref{def:Dissweak}.     
\end{theorem}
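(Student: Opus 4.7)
My plan is to construct the Reynolds stress $\mathfrak R$ via a Hahn--Banach / Riesz representation argument, with the key input a one-sided bound extracted from the energy-variational inequality~\eqref{eq:envar}. The first step is to test~\eqref{eq:envar} with $\f\varphi = \lambda\psi$ for a time-independent deterministic $\psi \in \C^2_{0,\sigma}(\ov\D;\R^d)$ and $\lambda > 0$ (so that $A=B=0$). Dividing by $\lambda$ and letting $\lambda\to\infty$ kills $E/\lambda$ and $(2\lambda)^{-1}\int\|\sigma^1\|^2$, while the penalty $2\|(\nabla(\lambda\psi))_{\sym,-}\|_{L^\infty}$ scales linearly in $\lambda$ and survives. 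This will yield, for every $0\le s<t\le T$ and $\mathbb P$-almost surely,
\begin{equation}\label{eq:plan-key}
\mathcal N(\psi)(s,t) \;\le\; 2\int_s^t \|(\nabla\psi)_{\sym,-}\|_{L^\infty(\R^{d\times d})}\, D(\tau)\,\de\tau,
\end{equation}
where $D(\tau) := E(\tau)-\tfrac12\|\f u(\tau)\|_{L^2(\D)}^2\ge 0$ and
\begin{align*}
\mathcal N(\psi)(s,t) := {}& -\int_\D (\f u(t)-\f u(s-))\cdot\psi\,\de x + \int_s^t\int_\D \Bigl[\f u\otimes\f u:\nabla\psi + \tfrac12[\mathcal P(\sigma^2\cdot\nabla)]^2\psi\cdot\f u\Bigr]\de x\,\de\tau \\
& + \int_s^t\int_\D\bigl[\psi\cdot\sigma^1 - \nabla\psi:\f u\otimes\sigma^2\bigr]\de x\,\de W(\tau)
\end{align*}
is precisely the residual of the weak formulation~\eqref{eq:defdissweak} over $[s,t]$ (using $\nabla\psi:\f u\otimes\sigma^2 = \f u\cdot(\sigma^2\cdot\nabla)\psi$ and divergence-freeness of $\f u$). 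Substituting $-\psi$ for $\psi$ in~\eqref{eq:plan-key} gives the complementary bound $-\mathcal N(\psi)(s,t) \le 2\int_s^t\|(\nabla\psi)_{\sym,+}\|_{L^\infty}D(\tau)\,\de\tau$.

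Next I will exploit this two-sided bound by a duality argument. Since $\f u \in \C_w([0,T];L^2_\sigma(\D))$ $\mathbb P$-almost surely, the residual $\mathcal N(\psi)(\cdot,\cdot)$ is additive in its interval argument, and by the two-sided bound the kernel condition $(\nabla\psi)_{\sym} \equiv 0$ forces $\mathcal N(\psi)\equiv 0$. Hence the prescription
\[
\sum_i \chi_{[s_i,t_i)}(\tau)\,(\nabla\psi_i)_{\sym}(x) \;\longmapsto\; -\sum_i\mathcal N(\psi_i)(s_i,t_i)
\]
defines a well-defined linear functional $L$ on the subspace $Y\subset L^1(0,T;\C(\ov\D;\R^{d\times d}_{\sym}))$ spanned by simple tensors, dominated on $Y$ by the sublinear functional $p(\Psi) := 2\int_0^T\|\Psi_+(\tau,\cdot)\|_{L^\infty(\ov\D)}D(\tau)\,\de\tau$, where $\Psi_+$ denotes the pointwise positive semidefinite part. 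Hahn--Banach extends $L$ to $\tilde L$ on $L^1(0,T;\C(\ov\D;\R^{d\times d}_{\sym}))$ with $\tilde L\le p$, and the isometric duality $(L^1(0,T;\C(\ov\D;\R^{d\times d}_{\sym})))^* \cong L^\infty_{w^*}(0,T;\mathcal M(\ov\D;\R^{d\times d}_{\sym}))$ provides $\mathfrak R$ with $\tilde L(\Psi) = \int_0^T\int_{\ov\D}\Psi:\de\mathfrak R(\tau)\,\de\tau$. Testing against $\Psi\le 0$ pointwise forces $\tilde L(\Psi)\le 0$, hence $\mathfrak R\ge 0$ in the positive semidefinite sense; testing against $\Psi=\chi_{[s,t)}I$ gives the trace bound $\int_{\ov\D}\de\tr(\mathfrak R(\tau))\le 2D(\tau)$ for a.e.~$\tau$.

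The remaining verifications are then routine: by construction $-\mathcal N(\psi)(0,t)=\int_0^t\int_{\ov\D}(\nabla\psi)_{\sym}:\de\mathfrak R(\tau)\,\de\tau$, which is exactly~\eqref{eq:defdissweak}; the energy inequality~\eqref{eq:defDissenin} follows from~\eqref{eq:envar} with $\f\varphi\equiv 0$; and item~\ref{def:dissweak4} of Definition~\ref{def:Dissweak} follows from the trace bound combined with the defect identity $D = E - \tfrac12\|\f u\|^2$. The principal difficulty will be the progressive measurability of $\mathfrak R$ in $\omega$, as the Hahn--Banach extension is not canonical. I plan to resolve this by first defining $\mathfrak R$ uniquely by norm-continuity on the closure of $Y$ inside the space of traceless symmetric matrix fields (this closure is automatically adapted because $L$ itself is adapted), and then completing $\mathfrak R$ by adding a scalar trace component $q(\tau)I$, where $q(\tau)$ is chosen pathwise as the smallest value making $\mathfrak R$ positive semidefinite; the resulting $\mathfrak R$ then lies in $L^2_w(\Omega;L^\infty_{w^*}(0,T;\mathcal M(\ov\D;\R^{d\times d}_{\sym,+})))$ thanks to $\|D\|_{L^\infty(0,T)} \in L^2(\Omega)$, which is in turn guaranteed by the hypothesized integrability of $(\f u,E)$.
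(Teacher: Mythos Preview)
Your overall strategy---extract a one-sided bound by scaling the test function in~\eqref{eq:envar}, then use Hahn--Banach and Riesz to produce the defect $\mathfrak R$---is exactly the mechanism the paper uses. The difference is that you work pathwise (per $\omega$), while the paper works in expectation: it tests~\eqref{eq:envar} with general stochastic test processes, takes expectations, and applies Hahn--Banach once on the space $\mathcal W$ of progressively measurable matrix-valued processes $\Psi\in L^2(\Omega;L^1(0,T;\C(\ov\D;\R^{d\times d}_{\sym})))$. Riesz then yields $\mathfrak R$ directly as an element of the dual, with progressive measurability built into the setup. The price the paper pays is that the resulting identity holds only in expectation, so a separate martingale-representation argument (Lemma~\ref{lem:weakdiss}) is needed to recover the pathwise weak formulation~\eqref{eq:defdissweak} with its stochastic integral. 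Your pathwise approach would avoid that lemma entirely, which is attractive.

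The genuine gap is your measurability argument. Your fix---extend $L$ uniquely by continuity to the closure $\overline Y$, then add $q(\tau)I$---does not work, because $\overline Y$ is a \emph{proper} subspace of $L^1(0,T;\C(\ov\D;\R^{d\times d}_{\sym,\mathrm{tracefree}}))$. Indeed, the map $\psi\mapsto(\nabla\psi)_{\sym}$ sends divergence-free vector fields (two scalar degrees of freedom in $d=3$, one in $d=2$) into trace-free symmetric tensor fields (five, respectively two, scalar components), so its range cannot be dense. Consequently, knowing $L$ on $\overline Y$ does not determine a trace-free measure $\mathfrak R_0$; the Hahn--Banach extension to the remaining trace-free directions is still non-canonical, and your ``add the smallest $q(\tau)I$'' step only fills in the trace direction. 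You therefore still face a per-$\omega$ non-constructive choice, and measurability of $\omega\mapsto\mathfrak R(\omega)$ is not established. One could attempt a measurable-selection argument on the convex set of admissible extensions, but that is substantially more work than your proposal indicates; the paper's expectation-based route sidesteps the issue entirely at the cost of invoking Lemma~\ref{lem:weakdiss}.
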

As an immediate consequence of Theorem~\ref{thm:equi} and Theorem~\ref{thm:ex}, we deduce 
\begin{corollary}[Existence of dissipative weak solutions]
Let Assumption~\ref{Ass:1} be fulfilled as well as $\f {u}_0\in L^p ({\Omega},\mathcal{F}_0; L_{\sigma }^2\left(\D\right))$
     for some $p>4$.
    Then, there exists a dissipative weak solution in the sense of Definition~\ref{def:Dissweak} such that
    $ E(0) =\frac{1}{2}\| \f u_0\|_{L^2(\mathcal{D})}^2 $ $\mathbb P$-almost surely and the solution enjoys the additional regularity 
    \[
(\f{u},E
) \in L^p\left( \Omega; L^\infty(0,T; L^2_\sigma(\D)) \right) \times  L^{p/2}(\Omega; \mathfrak{D}([0,T]))
\]
as well as 
\[
\mathfrak{R}\in L^{2
}_{w} (\Omega;  L^\infty_{w^*}( 0,T; \mathcal{M}({D}; \R^{d\times d}_{\sym,+})))
\]
\end{corollary}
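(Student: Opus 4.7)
The plan is to construct the Reynolds defect $\mathfrak{R}$ from the slack in the energy-variational inequality~\eqref{eq:envar} via a Hahn--Banach and Riesz representation argument, in the spirit of the deterministic construction in~\cite{EnvarIncomp}. Throughout, fix $\omega\in\Omega$ and $t\in(0,T]$ outside a null set, and for a deterministic time-independent test function $\boldsymbol{\varphi}\in\C^2_{0,\sigma}(\ov\D;\R^d)$ (which is a test process with $A=B=0$) introduce
\begin{equation*}
\begin{aligned}
\ell_t(\boldsymbol{\varphi}):={}&-\int_{\D}(\f u(t)-\f u_0)\cdot\boldsymbol{\varphi}\de x+\int_0^t\!\int_{\D}\bigl[\nabla\boldsymbol{\varphi}:\f u\otimes\f u+\tfrac12[\mathcal P(\sigma^2\cdot\nabla)]^2\boldsymbol{\varphi}\cdot\f u\bigr]\de x\de s\\
&+\int_0^t\!\int_{\D}\bigl[\boldsymbol{\varphi}\cdot\sigma^1-\nabla\boldsymbol{\varphi}:\f u\otimes\sigma^2\bigr]\de x\de W(s),
\end{aligned}
\end{equation*}
so that the weak formulation~\eqref{eq:defdissweak} is equivalent to the identity $-\ell_t(\boldsymbol{\varphi})=\int_0^t\!\int_{\ov\D}(\nabla\boldsymbol{\varphi})_{\sym}:\de\mathfrak{R}(s)\de s$.

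First, I would combine~\eqref{eq:envar} at $s=0$ with the energy inequality extracted from~\eqref{eq:envar} at $\boldsymbol{\varphi}=0$, which provides the non-negative quantity $D(t):=E(0)-E(t)+\int_0^t\!\int_{\D}\f u\cdot\sigma^1\de x\de W+\tfrac12\int_0^t\|\sigma^1\|^2\de\tau\ge 0$, to derive
\[\ell_t(\boldsymbol{\varphi})\le D(t)+C_t\,\|(\nabla\boldsymbol{\varphi})_{\sym,-}\|_{L^\infty(\ov\D)},\qquad C_t:=2\int_0^t\bigl[E-\tfrac12\|\f u\|_{L^2}^2\bigr]\de\tau\ge 0.\]
Linearity of $\ell_t$ and positive $1$-homogeneity of the second summand permit the rescaling $\boldsymbol{\varphi}\mapsto\lambda\boldsymbol{\varphi}$ with $\lambda\to+\infty$, eliminating the constant $D(t)$ and yielding the sharp sublinear bound $\ell_t(\boldsymbol{\varphi})\le C_t\|(\nabla\boldsymbol{\varphi})_{\sym,-}\|_{L^\infty}$, together with the companion $\ell_t(\boldsymbol{\varphi})\ge -C_t\|(\nabla\boldsymbol{\varphi})_{\sym,+}\|_{L^\infty}$ obtained by replacing $\boldsymbol{\varphi}$ by $-\boldsymbol{\varphi}$. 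In particular $\ell_t$ vanishes whenever $(\nabla\boldsymbol{\varphi})_{\sym}=0$, so on the image $V_0:=\{(\nabla\boldsymbol{\varphi})_{\sym}:\boldsymbol{\varphi}\in\C^2_{0,\sigma}(\ov\D;\R^d)\}\subset\C(\ov\D;\R^{d\times d}_{\sym})$ the prescription $\Psi_t(M):=-\ell_t(\boldsymbol{\varphi})$ is a well-defined linear functional satisfying $-C_t\|M_-\|_{L^\infty}\le\Psi_t(M)\le C_t\|M_+\|_{L^\infty}$.

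Second, the Hahn--Banach theorem extends $\Psi_t$ to a linear $\tilde\Psi_t$ on $\C(\ov\D;\R^{d\times d}_{\sym})$ dominated by the sublinear map $M\mapsto C_t\|M_+\|_{L^\infty}$; the Riesz representation theorem then yields a Radon measure $\mathfrak{R}^*(t)$ with $\tilde\Psi_t(M)=\int_{\ov\D}M:\de\mathfrak{R}^*(t)$. Testing against arbitrary $M\ge 0$ forces $\mathfrak{R}^*(t)\in\mathcal M(\ov\D;\R^{d\times d}_{\sym,+})$, and testing against $M=\chi\,\mathrm{Id}$ with $\chi\ge 0$ delivers $\int_{\ov\D}\de\tr\mathfrak{R}^*(t)\le C_t$. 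Since $t\mapsto C_t$ is absolutely continuous with Lebesgue density $2[E(t)-\tfrac12\|\f u(t)\|_{L^2}^2]$, differentiating $\langle\mathfrak{R}^*(s),f\rangle$ for $f$ in a countable dense subset of $\C(\ov\D;\R^{d\times d}_{\sym})$ and extending by continuity produces a weak-$*$ measurable family $s\mapsto\mathfrak{R}(s)\in\mathcal M(\ov\D;\R^{d\times d}_{\sym,+})$ with $\mathfrak{R}^*(t)=\int_0^t\mathfrak{R}(s)\de s$ and $\int_{\ov\D}\de\tr\mathfrak{R}(s)\le 2[E(s)-\tfrac12\|\f u(s)\|_{L^2}^2]$ for a.e.\ $s$. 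Inserting this factorization into $-\ell_t(\boldsymbol{\varphi})=\int_{\ov\D}(\nabla\boldsymbol{\varphi})_{\sym}:\de\mathfrak{R}^*(t)$ recovers~\eqref{eq:defdissweak}; the trace bound yields item~\ref{def:dissweak4} of Definition~\ref{def:Dissweak}; and~\eqref{eq:defDissenin} is just~\eqref{eq:envar} at $\boldsymbol{\varphi}=0$. The hypotheses $\f u\in L^4(\Omega;L^\infty(0,T;L^2))$ and $E\in L^2(\Omega;\mathfrak D([0,T]))$ then deliver the claimed $L^2_w$-integrability in $\omega$.

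The main obstacle is the $(\mathcal F_t)$-progressive measurability of $\mathfrak{R}$, as the Hahn--Banach extension step is non-canonical. I would resolve this by selecting, pointwise in $(\omega,t)$, the admissible extension of minimal total trace $\int_{\ov\D}\de\tr\mathfrak{R}^*(t)$ among all linear extensions representing $\Psi_t$ on $V_0$; this is a measurable convex minimization problem in the $(\f u,E)$-data and thereby inherits the $(\mathcal F_t)$-progressive measurability of the solution. Alternatively, the Kuratowski--Ryll-Nardzewski measurable selection theorem applies to the closed convex multifunction of admissible extensions to deliver the same conclusion.
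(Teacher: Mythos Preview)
Your overall strategy---extract a linear functional from the energy-variational inequality, bound it by a sublinear map involving $\|(\nabla\boldsymbol{\varphi})_{\sym,-}\|_{L^\infty}$, apply Hahn--Banach and Riesz---matches the paper's substance (the Corollary is stated as an immediate consequence of Theorems~\ref{thm:ex} and~\ref{thm:equi}, and you are essentially reproving the latter). However, your implementation differs from the paper's in a way that creates two genuine gaps.

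\textbf{The time-disintegration step fails.} You fix $(\omega,t)$, use only \emph{time-independent} test functions $\boldsymbol{\varphi}$, and obtain a non-canonical Hahn--Banach extension $\mathfrak{R}^*(t)$ for each~$t$. You then propose to differentiate $t\mapsto\langle\mathfrak{R}^*(t),f\rangle$ to recover $\mathfrak{R}(s)$. But the extensions at different times are chosen independently; absolute continuity of the \emph{bound} $C_t$ in no way forces $t\mapsto\mathfrak{R}^*(t)$ to be absolutely continuous (or even measurable) in~$t$---it can oscillate arbitrarily within the trace constraint. There is no mechanism linking $\mathfrak{R}^*(t_1)$ and $\mathfrak{R}^*(t_2)$, so the factorization $\mathfrak{R}^*(t)=\int_0^t\mathfrak{R}(s)\de s$ is unjustified.

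\textbf{The measurable-selection fix is incomplete.} The ``minimal total trace'' extension is not a selection: the set of admissible extensions with minimal trace is a nonempty convex set, generically not a singleton, so you still need a selection within it. Kuratowski--Ryll-Nardzewski is plausible but requires verifying weak measurability of the multifunction of admissible extensions in $(\omega,t)$ jointly, which is nontrivial and which you have not addressed.

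The paper avoids both issues by performing Hahn--Banach \emph{once} on the product space. The linear form $\f l$ is defined on test \emph{processes} $(\f\psi_0,A_{\f\psi},B_{\f\psi})$ with expectation taken, the sublinear bound $\mathfrak{p}$ lives on $L^2(\Omega;L^1(0,T;\C(\ov\D;\R^{d\times d}_{\sym})))$, and Riesz delivers $\mathfrak{R}$ directly in $L^2_w(\Omega;L^\infty_{w^*}(0,T;\mathcal{M}))$, progressively measurable by construction. The cost is that the weak formulation~\eqref{eq:defdissweak} first emerges only in expectation against test processes; the paper then needs a separate argument (Lemma~\ref{lem:weakdiss}, via the martingale representation theorem) to recover the pointwise-$\mathbb{P}$-a.s.\ formulation with deterministic test functions. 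Your pointwise route, if it could be made to work, would bypass that lemma---but as written it does not.
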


 \begin{proof}[Proof of Theorem~\ref{thm:equi}]
    Let $(\f u, E)$ be an energy-variational solution in the sense of Definition~\ref{def:Envar}.  
    Choosing~$\f \varphi = 0$ implies the energy inequality~\eqref{eq:defDissenin}. 
We choose $\f \varphi = \alpha \f \psi $ in~\eqref{eq:envar} with $ \f \psi $ being a test process according to Definition~\ref{def:stoch} with $ (\psi_0, A_{\psi}, B_{\psi})$
 and fulfilling $\f \psi(t)= \psi_0 +\int_0^t A_{\psi}\de s + \int_0^t  B_{\psi}\de W(s) = 0$ for all $t\in [0,T]$. 
Multiplying the resulting inequality by $1/\alpha$, sending $\alpha \nearrow \infty $ implies 
\begin{equation}\label{eq:envar2}
            \begin{aligned}
-\int_{\D} \f{u}(T)&
\cdot \boldsymbol{\psi}(T)
\de x 
+\int_{\D} \f{u}_0
\cdot \boldsymbol{\psi}(0)
\, dx\\&+  \int_0^T 
    \int_{\D} \left[ \f{u}
    \otimes \f{u}
    \right] : \nabla \boldsymbol{\psi}
    + A_\psi\cdot \f u + \frac{1}{2}[\mathcal P(\sigma^2 \cdot \nabla) ]^2 \f \psi\cdot \f u \, dx \, \de s  \\&+ \int_0^T 
    \left[   
        \int_{\D} \sigma^1   \cdot \boldsymbol{\psi} - \f u \cdot ( \sigma^2 \cdot \nabla) \f \psi + B_\psi \cdot \f u 
        \de x \,\right] \de W (s)
        \\&+ 
        \int_0^T \Tr{
        \int_{\D}\sigma^1 \cdot B_\psi - \f u \cdot (\sigma^2 \cdot \nabla) B_\psi \de x }
        \de s  \\  \leq{}&\int_0^T 
        2  \| (\nabla \boldsymbol{\psi})_{\mathrm{sym},-} \|_{L^\infty(D;\mathbb R^{d\times d})} \left[E-  \frac{1}{2}\int_{\D} | \f{u}|^2   \right] \, \de s \,.
    \end{aligned}
    \end{equation}
    We define the linear space $ \mathcal{V}$ of $(\mathcal{F}_t)$-progressively measurable  functions $(\f\psi_0,A_\psi,B_\psi)$
via
\begin{align*}
    \f\psi_0 & \in L^{\infty}(\Omega; L^2_\sigma (\D))\,, \\ A_\psi&\in   L^{2}(\Omega ; L^2(0,T; L^2_\sigma(D)))\,,\\
    B_\psi &\in  L^\infty (\Omega; L^2(0,T; L_2 (\mathfrak{U}; 
      H^2(\D)\cap 
     H^1_{0,\sigma}(\D)))\,,
\end{align*}
such that for  $ \f \psi (t)=  \f \psi_0+\int_0^t A \de s + \int_0^t B \de W(s)$ it holds in addition 
$$\f\psi  \in  L^{\infty} ( \Omega ; \C([0,T]; L^2_\sigma (\D)) \cap L^1(0,T;\C^1(\D)\cap H^2(\D)))  \,, $$ 
     
     On this linear space, we define the linear form $\f l : \mathcal{V}\to \R$ via 
     \begin{align*}
    \langle \f l, (\f \psi_0, A_{\f\psi}, B_{\f\psi})  \rangle _{\mathcal{V}}:= {}&\mathbb{E}\Bigg[%
-\int_{\D} \f{u}(T)
\cdot \boldsymbol{\psi}(T)
\de x 
+\int_{\D} \f{u}_0
\cdot \boldsymbol{\psi}(0)
\, dx\\&+  \int_0^T 
    \int_{\D} \left[ \f{u}
    \otimes \f{u}
    \right] : \nabla \boldsymbol{\psi}
    + A_{\f\psi} \cdot \f u + \frac{1}{2}[\mathcal P(\sigma^2 \cdot \nabla) ]^2 \f \psi\cdot \f u \, dx \, \de s  \\&+ \int_0^T 
    \left[   
        \int_{\D} \sigma^1   \cdot \boldsymbol{\psi} - \f u \cdot ( \sigma^2 \cdot \nabla) \f \psi + B_{\f\psi} \cdot \f u 
        \de x \,\right] \de W (s)
        \\&+ 
        \int_0^T \Tr{
        \int_{\D}\sigma^1 \cdot B_{\f \psi} - \f u \cdot (\sigma^2 \cdot \nabla) B_{\f\psi} \de x }
        \de s 
        \Bigg]
\end{align*}
where every occurrence of $\f \psi $ has to be interpreted as a mapping of $(\f\psi_0,A_{\f \psi},B_{\f \psi}) $ via $ \f \psi(t) = \f\psi_0+ \int_0^t A _\psi\de s + \int_0^t B _\psi \de W (s)$. 
This linear form is exactly the expectation of the left-hand side of ~\eqref{eq:envar2}. We kept the stochastic integral part in the third line to make this obvious, even though it vanishes in expectation.

Furthermore, 
we define the space $\mathcal{W}$ of $(\mathfrak{F}_t)$-progressively measurable functions $\Psi$ such that $\Psi \in L^{2}(\Omega; L^1  (0,T; \mathcal C^{0}(D;\R^{d\times d}_{\sym}))) $. 
On the space $  \mathcal{V} $, we consider  the linear map $\mathcal{I}:\mathcal{V}\to \mathcal{W}$  via $$ \mathcal{I}(\f \psi_0, A_{\f\psi}, B_{\f\psi}):= ( \nabla \f \psi) _{\sym} = \left( \nabla \left(\f\psi(0)+ \int_0^t A_{\f\psi} \de s + \int_0^t B_{\f\psi} \de W (s)\right)\right) _{\sym}$$ and the sublinear mapping $\mathfrak{p}: \mathcal{W}
\to \R$
via 
\begin{equation}\label{p}
    \mathfrak{p}(\Psi):= \mathbb{E}\left [ \int_0^T 2 \| ( \Psi )_-\|_{\C(D;\R^{d\times d})} \left[ E - \frac{1}{2}\| \f u\|_{L^2(\D)}^2 \right]\de t \right]\,.
\end{equation}
Note that this mapping is well defined due to the regularity of $E$ and $\f u$. 
Via an extension lemma for linear maps~\cite[Lemma~2.3]{NewPreprint} relying on the Hahn--Banach theorem, there exists a linear mapping $ L : \mathcal{W} \to \R $ such that 
\begin{equation}\label{linearformest}
\langle L,\mathcal{I}(\f\psi_0,A_{\f\psi},B_{\f\psi})\rangle=\langle \f l,(\f \psi_0,A_{\f\psi},B_{\f\psi})\rangle \quad \text{and}\quad \langle L, \Psi \rangle \leq \mathfrak p(\Psi)
\end{equation}
for all $(\f\psi_0,A_{\f\psi},B_{\f\psi}) \in \mathcal{V}$ and for all $\Psi\in \mathcal{W}$.
Using the Riesz representation theorem, 
we may identify this extension with an object $-\mathfrak R \in L^{2
}_w(\Omega; L^\infty_{w^*}(0,T;\mathcal{M}(\overline D;\R^{d\times d}_{\sym})))$ 
such that 
\begin{equation}
\begin{aligned}
&\expect{
-\int_{\D} \f{u}(T)
\cdot \boldsymbol{\psi}(T)
\de x 
+
\int_{\D} \f{u}_0
\cdot \boldsymbol{\psi}_0
\, \de x}\\&+  \expect{\int_0^T 
    \int_{\D} \left[ \f{u}
    \otimes \f{u}
    \right] : \nabla \boldsymbol{\psi}
    + A _{\f\psi} \cdot \f u + \frac{1}{2}[ \mathcal P(\sigma^2 \cdot \nabla) ] ^2\f \psi\cdot \f u \, \de x  \, \de s } 
        \\&+ \expect{
        \int_0^T \Tr{
        \int_{\D}\sigma^1 \cdot B_{\f\psi}  - \f u \cdot (\sigma^2 \cdot \nabla) B_{\f\psi} \de x }
        \de s 
        }\\
 &+ \mathbb{E}\left [ \int_0^T \int_{\overline{\D}} (\nabla \f\psi)_{\sym} : \de \mathfrak{R}(x) \de s  \right ]  = 0 ,
       \end{aligned}
 \end{equation}
 for all $(\f\psi_0,A_{\f\psi},B_{\f\psi}) \in \mathcal{V}$. 
 From this, we observe by Lemma~\ref{lem:weakdiss} that Definition \ref{def:Dissweak},~\ref{eq:weakdisseq} is fulfilled. 
 
The second point of~\eqref{linearformest} implies that
\begin{align*}    
    -\mathbb{E}\left [ \int_0^T \int_{\overline{\D}} (\Psi)_{\sym} : \de \mathfrak{R}\de t  \right ]  \leq \mathfrak{p}(\Psi) = \mathbb{E}\left[ \int_0^T  2 \| ( \Psi  )_{\sym,-}\|_{\C(D;\R^{d\times d})} \left[ E - \frac{1}{2}\| \f u\|_{L^2(\D)}^2 \right]\de t \right]
\end{align*}
    for all $\Psi \in \mathcal{W}$.  Hence, for $\eta$ a progressively measurable process and $ \zeta \in \C (\ov\D;\R^{d\times d}_{\sym})$, with $ \Psi(\omega, t, x) := \eta(\omega,t) \zeta (x)$ we get
    \begin{align*}    
    -\mathbb{E}\left [ \int_0^T \eta \int_{\overline{\D}} \zeta_{\sym} : \de \mathfrak{R}\de t  \right ]  \leq \mathfrak{p}(\Psi) = \mathbb{E}\left[ \int_0^T \eta  2 \| ( \zeta   )_{\sym,-}\|_{\C(D;\R^{d\times d})} \left[ E - \frac{1}{2}\| \f u\|_{L^2(\D)}^2 \right]\de t \right].
\end{align*}
    By the fundamental lemma for  progressively measurable processes this implies that
\begin{align*}    
    - \int_{\overline{\D}} (\zeta)_{\sym} : \de \mathfrak{R}  \leq   2 \| ( \zeta  )_{\sym,-}\|_{\C(D;\R^{d\times d})} \left[ E - \frac{1}{2}\| \f u\|_{L^2(\D)}^2 \right]
\end{align*}
    for all $ \zeta \in \C (\ov\D;\R^{d\times d}_{\sym})$ a.e.~in $(0,T)$ and $\mathbb P$-almost surely. 
For all $ \zeta \in C (\ov\D;\R^{d\times d}_{\sym,+})$, we observe that $\int_{\overline{\D}} (\zeta)_{\sym} : \de \mathfrak{R}\geq 0$, which implies  
$ \mathfrak{R}\in  L^{2
}_{w} (\Omega;  L^\infty_{w^*}( 0,T; \mathcal{M}({D}; \R^{d\times d}_{\sym,+})))$. 
    Choosing $\zeta ( x ) \equiv  - \frac{1}{2} I $,
    we observe  that
    \begin{equation*}
        \frac{1}{2} \int_{\overline{\D}}\de \tr{(\mathfrak{R})}\de t   \leq  \left[ E - \frac{1}{2}\| \f u\|_{L^2(\D)}^2\right]
    \end{equation*}
    a.e.~in $(0,T)$ and $\mathbb P$-almost surely. Note that we take the spectral norm $|\cdot|_2$ of the matrix $(\zeta)_{\sym} $ in the definition of $\mathfrak{p}$ in~\eqref{p} such that $|I|_2=1$. Indeed, the correct dual norm of the spectral norm with respect to the Frobenius product is exactly the trace norm, giving rise to the term on the left-hand side of the previous inequality. This implies point~\textit{i)} of Definition~\ref{def:Dissweak}.
\end{proof}
\begin{lemma}\label{lem:weakdiss}
 Let $\f u$ be an adapted process with values in $L^{4}(\Omega;C_{w}([0,T];L_{\sigma}^{2}(\mathcal{D})))$, and $\mathfrak{R}\in L_{w}^{2}(\Omega;L_{w^{*}}^{\infty}(0,T;\mathcal{M}(D;\mathbb{R}_{sym,+}^{d\times d})))$ progressively measurable, such that, for all test-processes $\f \psi$ in the sense of Definition~\ref{def:stoch} we have 
\begin{equation}
\begin{aligned} & \mathbb{E}\left[-\int_{\mathcal{D}}\f u(T)\cdot\f \psi(T)\dx+\int_{\mathcal{D}}\f u_{0}\cdot\f \psi(0)\dx\right]\\
 & +\mathbb{E}\left[\int_{0}^{T}\int_{\mathcal{D}}[\f u\otimes \f u]:\nabla\f \psi+A_{\f \psi}\cdot u+\frac{1}{2}[\mathcal{P}(\sigma^{2}\cdot\nabla)]^{2}\f \psi\cdot \f u~\dx \ds\right]\\
 & +\mathbb{E}\left[\int_{0}^{T}\Tr{\int_{\mathcal{D}}\sigma^{1}\cdot B_{\f \psi}-\f u\cdot(\sigma^{2}\cdot\nabla)B_{\f \psi}\dx }\ds\right]\\
 & +\mathbb{E}\left[\int_{0}^{T}\int_{\mathcal{D}}(\nabla\f \psi)_{\sym}:\de \mathfrak{R}(x)\ds\right]=0.
\end{aligned}
\label{eq:4.10-3}
\end{equation}
Then, $\P$-a.s, for all $t\ge 0,$ and all test functions $\varphi \in \C^2_{0,\sigma}(\ov\D;\R^d)$, we have 
\begin{equation}
\begin{aligned} & -\int_{\mathcal{D}}\f u(t)\cdot \varphi \dx +\int_{\mathcal{D}}\f u_{0}\cdot\varphi \dx \\
 & +\int_{0}^{t}\int_{\mathcal{D}}[\f u\otimes \f u]:\nabla\varphi+\frac{1}{2}[\mathcal{P}(\sigma^{2}\cdot\nabla)]^{2}\varphi\cdot \f u~\dx \ds\\
 &+\int_{0}^{t}\int_{\mathcal{D}}\sigma^{1}\cdot\varphi-\f u\cdot(\sigma^{2}\cdot\nabla)\varphi \dx \,\de W(s)\\
 & +\int_{0}^{t}\int_{\mathcal{D}}(\nabla\varphi)_{\sym}:\de \mathfrak{R}(x)\ds=0\,.
\end{aligned}
\label{eq:4.10-3-1}
\end{equation}
\end{lemma}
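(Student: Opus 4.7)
The plan is to fix $\varphi\in\C^2_{0,\sigma}(\ov\D;\R^d)$, set $Y(s):=\int_\D \f u(s)\cdot\varphi\dx$, and abbreviate
$$F_1(s):=\int_\D[\f u\otimes \f u]:\nabla\varphi+\tfrac12[\mathcal{P}(\sigma^2\cdot\nabla)]^2\varphi\cdot\f u\dx,\qquad G_1(s):=\int_{\ov\D}(\nabla\varphi)_{\sym}:\de\mathfrak{R}(x),$$
together with $H_1(s):=\int_\D(\sigma^1\cdot\varphi-\f u\cdot(\sigma^2\cdot\nabla)\varphi)\dx$, so that \eqref{eq:4.10-3-1} is equivalent to the identity $Y(t)=\tilde Y(t)$ for all $t\in[0,T]$, $\P$-a.s., where $\tilde Y(t):=Y(0)+\int_0^t(F_1+G_1)\ds+\int_0^t H_1\de W(s)$. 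By the weak continuity assumption on $\f u$, $Y$ is pathwise continuous in $t$, while $\tilde Y$ is a continuous semimartingale, so it is enough to prove $Y(t)=\tilde Y(t)$ $\P$-a.s.\ for each \emph{fixed} $t\in[0,T]$.

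To compare the two, I would test \eqref{eq:4.10-3} against product-type test processes $\f\psi_\varepsilon(s,\omega,x):=\chi_\varepsilon(s)\eta(s,\omega)\varphi(x)$, where $\chi_\varepsilon\in\C^1([0,T])$ is a deterministic scalar cutoff with $\chi_\varepsilon\equiv 1$ on $[0,t]$ and $\chi_\varepsilon\equiv 0$ on $[t+\varepsilon,T]$, and $\eta$ is a bounded scalar It\^o process with $\de\eta=a\ds+b\de W$. A direct check shows $\f\psi_\varepsilon$ is admissible in the sense of Definition~\ref{def:stoch} with $A_{\f\psi_\varepsilon}=(\chi_\varepsilon'\eta+\chi_\varepsilon a)\varphi$ and $B_{\f\psi_\varepsilon}=\chi_\varepsilon b\varphi$. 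Inserting into \eqref{eq:4.10-3}, the boundary term at $T$ drops out because $\chi_\varepsilon(T)=0$, and sending $\varepsilon\to 0$ (using weak continuity of $Y$ to get $\int_0^T\chi_\varepsilon'\eta Y\ds\to-\eta(t)Y(t)$) yields the localized identity
$$\mathbb{E}[\eta(t)Y(t)]=\mathbb{E}\left[\eta(0)Y(0)+\int_0^t\eta(F_1+G_1)\ds+\int_0^t aY\ds+\int_0^t bH_1\ds\right].$$

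For bounded $\xi\in L^\infty(\Omega,\mathcal{F}_t)$ drawn from a dense subclass on which the martingale-representation integrand is essentially bounded (e.g.\ smooth cylindrical Wiener functionals), I would take $\eta(s):=\mathbb{E}[\xi\mid\mathcal{F}_{s\wedge t}]$, which is a continuous martingale with $\eta(t)=\xi$, $\eta(0)=\mathbb{E}[\xi]$, drift $a\equiv 0$, and diffusion $b=c\mathbf{1}_{[0,t]}$ where $\xi=\mathbb{E}[\xi]+\int_0^t c\de W$ is the martingale representation. The tower property together with the It\^o isometry (applied to $\mathbb{E}[\xi\int_0^t H_1\de W]=\mathbb{E}[\int_0^t cH_1\ds]$) converts the localized identity into $\mathbb{E}[\xi Y(t)]=\mathbb{E}[\xi\tilde Y(t)]$. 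Density of the tested $\xi$'s in $L^2(\Omega,\mathcal{F}_t)$ forces $Y(t)=\tilde Y(t)$ $\P$-a.s.\ for each fixed $t$, and the joint continuity in $t$ of both sides lifts this to the full $\P$-a.s.-and-for-all-$t$ identity required by \eqref{eq:4.10-3-1}.

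The main technical obstacle is the $L^\infty(\Omega)$-bound on $B_{\f\psi}$ built into Definition~\ref{def:stoch}: the martingale-representation integrand $c$ of a generic $\xi\in L^\infty(\Omega,\mathcal{F}_t)$ is only square-integrable and may fail to be essentially bounded in $\omega$, so one cannot plug in arbitrary $\xi$. This is circumvented by first restricting to a Malliavin-friendly dense subfamily (cylindrical polynomials in the Brownian increments, or exponential martingales) on which $c$ is bounded, and then extending the resulting equality $\mathbb{E}[\xi Y(t)]=\mathbb{E}[\xi\tilde Y(t)]$ to all of $L^2(\Omega,\mathcal{F}_t)$ by density. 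A secondary care-point is the passage to the limit in the $\chi_\varepsilon'$-term, where one exploits the continuity of the martingale $\eta$, the weak continuity of $\f u$, and dominated convergence justified by the boundedness of $\eta$ and of $Y$ in $L^\infty(0,T)$.
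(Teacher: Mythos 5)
Your proof is correct, and it runs on the same two engines as the paper's: (a) localization in time by inserting a smooth approximation of an indicator function into the class of admissible test processes, and (b) duality against conditional-expectation martingales $s\mapsto\mathbb{E}[\xi\mid\mathcal F_s]$, whose noise coefficient is produced by the martingale representation theorem. The difference is in how the pieces are assembled. The paper first proves that $M^{\varphi}(t)=-\int_{\D}\f u(t)\cdot\varphi\dx+\int_0^t(F_1+G_1)\ds$ is a continuous $(\mathcal F_t)$-martingale (its Step~2), then applies the representation theorem a second time to write $M^{\varphi}-M^{\varphi}(0)=\int_0^{\cdot}B^{\varphi}\de W$, and finally identifies $B^{\varphi}$ through yet another duality computation against test processes of the form $\psi_0+\int_0^{\cdot}B_{\psi}\de W$ (its Step~3). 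You collapse all of this into the single identity $\mathbb{E}[\xi Y(t)]=\mathbb{E}[\xi\tilde Y(t)]$ for $\xi$ ranging over a dense subset of $L^2(\Omega,\mathcal F_t)$: the tower property absorbs the drift and Reynolds terms, and the It\^o isometry applied to the representation of $\xi$ converts the trace term directly into $\mathbb{E}\bigl[\xi\int_0^tH_1\de W\bigr]$, so the martingale property and the identification of the stochastic integral are obtained in one stroke, without ever introducing $M^{\varphi}$ or $B^{\varphi}$. You are also more careful than the paper on one point it leaves implicit: the representation integrand of a generic bounded $\mathcal F_t$-measurable $\xi$ is only square-integrable in $\omega$, while Definition~\ref{def:stoch} requires $B_{\psi}\in L^{\infty}(\Omega;L^2(0,T;\cdot))$; your restriction to a cylindrical/exponential subfamily with bounded integrand, followed by an $L^2(\Omega)$-density extension, is exactly the right fix. (Both arguments share the cosmetic issue that $\varphi\in\C^2_{0,\sigma}(\ov\D;\R^d)$ need not have vanishing full trace and hence need not lie in $H^1_{0,\sigma}(\D)$ as the definition of $B_{\psi}$ formally demands; since the paper's own proof makes the same choice, this is not a defect of your argument.)
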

\begin{proof}The proof is divided into several steps. 

    \textit{Step 1: Generalize the formulation to arbitrary time intervals.}

Let $0<s<t<T$ and $\f \phi $ be a test-process in the sense of Definition~\ref{def:stoch}. Set $\f \psi(\tau):=\eta^\varepsilon_{_{[s,t]}}(\tau)\f \phi (\tau)$ for $\tau\in [0,T]$, where $\{\eta^\varepsilon_{_{[s,t]}}\}_{\varepsilon\in(0,1)}\subset  C^{2}([0,T])$ is an appropriate approximation of $ \mathds{1}_{[s,t]}$ such that $ \eta^\varepsilon_{_{[s,t]}}(0)=0=\eta^\varepsilon_{_{[s,t]}}(T)$ for all $\varepsilon >0$ and $ \eta^\varepsilon_{_{[s,t]}} \to \mathds{1}_{[s,t]}$ pointwise a.e.~in $[0,T]$ as $\varepsilon\searrow 0$.  Then, by It\^o's product rule, we have 
\begin{align*}
\f \psi(r)-\f \psi(0)= & \eta^\varepsilon_{_{[s,t]}}(r)\f \phi (r)-\eta_{_{[s,t]}}(0)\f \phi (0)\\
= & \int_{0}^{r}\eta^\varepsilon_{_{[s,t]}}(\tau)\de \f \phi (\tau)+\int_{0}^{r}\f \phi (\tau)(\eta^\varepsilon_{_{[s,t]}})'(\tau)\de \tau\\
= & \int_{0}^{r}\eta^\varepsilon_{_{[s,t]}}(\tau)A_{\f \phi }(\tau)+\f \phi (\tau)(\eta^\varepsilon_{_{[s,t]}})'(\tau)\,\de\tau+\int_{0}^{r}\eta^\varepsilon_{_{[s,t]}}(\tau)B_{\f \phi }(\tau)\de W(\tau).
\end{align*}
Hence, $\f \psi$ is a test-process in the sense of Definition~\ref{def:stoch} and we get from~\eqref{eq:4.10-3} that
\begin{equation}
\begin{aligned} & \mathbb{E}\left[\int_{0}^{T}(\eta^\varepsilon_{_{[s,t]}})'
\int_{\mathcal{D}}\f \phi 
\cdot \f u \de x \de \tau   \right]\\
 & +\mathbb{E}\left[\int_{0}^{T}\eta^\varepsilon_{_{[s,t]}}(r)\int_{\mathcal{D}}[\f u\otimes \f u]:\nabla\f \phi 
 +A_{\f \phi }\cdot u+\frac{1}{2}[\mathcal{P}(\sigma^{2}\cdot\nabla)]^{2}\f \phi 
 \cdot \f u~\de x \de \tau   \right]\\
 & +\mathbb{E}\left[\int_{0}^{T}\eta^\varepsilon_{_{[s,t]}}
 \Tr{\int_{\mathcal{D}}\sigma^{1}\cdot B_{\f \phi }-\f u\cdot(\sigma^{2}\cdot\nabla)B_{\f \phi }\de x }\de \tau   \right]\\
 & +\mathbb{E}\left[\int_{0}^{T}\eta^\varepsilon_{_{[s,t]}}
 \int_{\mathcal{D}}(\nabla\f \phi 
 )_{\sym}:\de \mathfrak{R}(x)\de \tau   \right]=0.
\end{aligned}
\label{eq:4.10-1-2}
\end{equation}
Now letting $\varepsilon \searrow 0$ such that $\{\eta^\varepsilon_{_{[s,t]}}\}_{\varepsilon \in (0,1)}$ converges to $\mathds 1_{[s,t]}$, we get that 
\begin{equation}
\begin{aligned} & \mathbb{E}\left[-\int_{\mathcal{D}}\f \phi (t)\cdot \f u(t)+\int_{\mathcal{D}}\f \phi (s)\cdot \f u(s)\right]\\
 & +\mathbb{E}\left[\int_{s}^{t}\int_{\mathcal{D}}[\f u\otimes \f u]:\nabla\f \phi 
 +A_{\f \phi }\cdot \f u+\frac{1}{2}[\mathcal{P}(\sigma^{2}\cdot\nabla)]^{2}\f \phi 
 \cdot \f u~\de x \de \tau  \right]\\
 & +\mathbb{E}\left[\int_{s}^{t}Tr\left[\int_{\mathcal{D}}\sigma^{1}\cdot B_{\f \phi }-\f u\cdot(\sigma^{2}\cdot\nabla)B_{\f \phi }\de x \right]\de \tau  \right]\\
 & +\mathbb{E}\left[\int_{s}^{t}\int_{\mathcal{D}}(\nabla\f \phi 
 )_{\sym}:\de \mathfrak{R}(x)\de \tau  \right]=0
\end{aligned}
\label{eq:4.10-1-1-1}
\end{equation}
for all $\f \phi $ test processes in the sense of Definition~\ref{def:stoch}.

\textit{Step 2: For any $\varphi\in C_{0,\sigma}^{2}(\mathcal{D})$, $M^{\varphi}$ is a continuous $(\mathcal{F}_{t})$-martingale.}

For $\varphi\in C_{0,\sigma}^{2}(\mathcal{D})$ we set 
\[
\begin{aligned}
    M^{\varphi}(t):={}&-\int_{\mathcal{D}}\varphi\cdot \f u(t)\de x 
    +\int_{0}^{t}\int_{\mathcal{D}}[\f u\otimes \f u]:\nabla\varphi+\frac{1}{2}[\mathcal{P}(\sigma^{2}\cdot\nabla)]^{2}\varphi\cdot \f u~\de x\de s\\&+\int_{0}^{t}\int_{\mathcal{D}}(\nabla\varphi)_{\sym}:\de \mathfrak{R}(x)\de s.
\end{aligned}
\]
Fix now $0<s<t<T$, let $\eta\in L^{\infty}(\Omega,\mathcal{F}_{s})$, and set $\f \phi (t,x,\omega):=\varphi(x)\mathbb E[\eta|\mathcal{F}_{t}](\omega).$ Then $\f \phi $ is an $(\mathcal{F}_{t})$-martingale, and since $(\mathcal{F}_{t})$ is generated by $W$, the martingale representation theorem~\cite[Theorem~2.5, p.51]{MartingaleRepresentation} implies that there is a $B_{\f \phi }\in L^2((0,T)\times\Omega;L_2(\mathfrak{U}, \R))$ such that 
\[
\f \phi (t,x)=\varphi(x)\mathbb E[\eta]+\int_{0}^{t}\varphi(x)B_{\f \phi }dW.
\]
Hence, $\f \phi $ is a valid test process. Moreover, since, for $t\ge s$, we have $\f \phi (r,x,\omega)=\varphi(x)\mathbb E[\eta|\mathcal{F}_{r}](\omega)=\varphi(x)\eta(\omega)$ , we have $B_{\f \phi }(r)=0$ for $r\ge s$. Hence, we get 
\[
\begin{aligned} & \mathbb{E}\left[-\int_{\mathcal{D}}\varphi\eta\cdot \f u(t)\dx+\int_{\mathcal{D}}\varphi\eta\cdot \f u(s)\dx\right]\\
 & +\mathbb{E}\left[\int_{s}^{t}\int_{\mathcal{D}}[\f u\otimes\f  u]:\nabla\varphi\eta+\frac{1}{2}[\mathcal{P}(\sigma^{2}\cdot\nabla)]^{2}\varphi\eta\cdot \f u~\dx \de \tau\right]\\
 & +\mathbb{E}\left[\int_{s}^{t}\int_{\mathcal{D}}(\nabla\varphi\eta)_{\sym}:\de \mathfrak{R}(x)\de \tau  \right]=0.
\end{aligned}
\]
By definition of $M^{\varphi}$ this implies 
\[
\begin{aligned}\mathbb{E}\left[(M^{\varphi}(t)-M^{\varphi}(s))\eta\right]=0\end{aligned}
.
\]
Since $\eta\in L^{\infty}(\Omega,\mathcal{F}_{s})$ is arbitrary, this yields 
\[
\begin{aligned}\mathbb{E}\left[M^{\varphi}(t)|\mathcal{F}_{s}\right]=M^{\varphi}(s)\,,\end{aligned}
\]
\textit{i.e.}, $M^{\varphi}$ is a continuous $(\mathcal{F}_{t})$-martingale.

\textit{Step 3: Identification of the stochastic integral.}

Since $M^{\varphi}$ is a continuous $(\mathcal{F}_{t})$-martingale, the martingale representation theorem~\cite[Theorem~2.5, p.51]{MartingaleRepresentation} implies that there exists a progressively measurable $B^{\varphi}\in L^2((0,T)\times\Omega;L_2(\mathfrak{U}, \R))$ such that, $\P$-almost surely, for all $t\ge0$,
\begin{align*}
 & M^{\varphi}(t)-M^{\varphi}(0)=\int_{0}^{t}B^{\varphi}(s)\de  W(s),
\end{align*}
that is,
\begin{align}
-\int_{\mathcal{D}}\varphi\cdot \f u(t)\dx+\int_{\mathcal{D}}\varphi\cdot \f u_0\
\dx +\int_{0}^{t}\int_{\mathcal{D}}[\f u \otimes \f u ]  :\nabla\varphi ~\de x\de s\label{eq:M-1}&\\+\int_{0}^{t}\int_{\mathcal{D}}\frac{1}{2}[\mathcal{P}(\sigma^{2}\cdot\nabla)]^{2}\varphi\cdot \f u ~\de x\de s+\int_{0}^{t}\int_{\mathcal{D}}(\nabla\varphi)_{\sym}:\de \mathfrak{R}(x)\de s
 & =\int_{0}^{t}B^{\varphi}(s)\de W(s).\nonumber 
\end{align}
It remains to identify the noise coefficient $B^{\varphi}(s)$. By \eqref{eq:M-1}, It\^o's product rule, and taking expectation, for any test process of the form 
\[
\psi(t)=\psi_{0}+\int_{0}^{t}B_{\psi}\de W(s)
\]
with $B_{\psi}\in L^2(\Omega\times(0,T);L_{2}(\mathfrak{U},\R))$ and $\psi_0\in \R$, we have that
\begin{align*}
 & \mathbb{E}\left[\psi(t)\int_{\mathcal{D}}\f u (t)\cdot\varphi\dx-\psi(0)\int_{\mathcal{D}}\f u (0)\cdot\varphi\dx\right]\\
 & =\mathbb E\left[\int_{0}^{t}\psi(s)\left[\int_{\mathcal{D}}[\f u \otimes \f u ]:\nabla\varphi+\frac{1}{2}[\mathcal{P}(\sigma^{2}\cdot\nabla)]^{2}\varphi\cdot \f u \,\de x+\int_{\mathcal{D}}(\nabla\varphi)_{\sym}:\de \mathfrak{R}(x)\right]\,\de s\right]\\
 & -\mathbb E\left[\int_{0}^{t}\Tr{
 B^{\varphi}(s)B_{\psi}(s)}\,\de s\right].
\end{align*} 
By the assumption \eqref{eq:4.10-3} in the localized form~\eqref{eq:4.10-1-1-1} with $s=0$ and $\f \phi=\varphi\psi$ as a test-process, we also have that
\[
\begin{aligned} & \mathbb{E}\left[\psi(t)\int_{\mathcal{D}}\f u (t)\cdot\varphi \de x-\psi(0)\int_{\mathcal{D}}\f u _{0}\cdot\varphi \de x\right]\\
 & =\mathbb{E}\left[\int_{0}^{t}\psi(s)\left[\int_{\mathcal{D}}[\f u \otimes \f u ]:\nabla\varphi+\frac{1}{2}[\mathcal{P}(\sigma^{2}\cdot\nabla)]^{2}\varphi\cdot \f u ~\de x+\int_{\mathcal{D}}(\nabla\varphi)_{\sym}:\de \mathfrak{R}(x)\right]\de s\right]\\
 & \quad +\mathbb{E}\left[\int_{0}^{t}\Tr{\int_{\mathcal{D}}\sigma^{1}\cdot\varphi B_{\psi}-\f u \cdot(\sigma^{2}\cdot\nabla)\varphi B_{\psi}\de x}\de s\right].
\end{aligned}
\]
Hence, we obtain
\begin{align*}
-\mathbb E\int_{0}^{t}\Tr{B^{\varphi}(s)B_{\psi}(s)}\,\de s & =\mathbb{E}\left[\int_{0}^{t}\Tr{\int_{\mathcal{D}}\sigma^{1}\cdot\varphi B_{\psi}-\f u \cdot(\sigma^{2}\cdot\nabla)\varphi B_{\psi}\de x}\de s\right].
\end{align*}
Now, for any progrssively measurable $b_{\psi} \in L^\infty(\Omega;\C([0,T])) $ and $ \mathbf{e}_k$ element of the orthonormal basis of $\mathfrak{U}$, setting $B^k_{\psi}(h):=b_{\psi}\langle h,\mathbf e_{k}\rangle _{\mathfrak{U}}$ for any $ h \in \mathfrak{U}$ we obtain that
\begin{align*}
-\mathbb E\int_{0}^{t}\langle B^{\varphi}(s), \mathbf e_{k}\rangle_{\mathfrak{U}}b_{\psi}(s)\,\de s & =\mathbb{E}\left[\int_{0}^{t}b_{\psi}(s)\left[\int_{\mathcal{D}}\langle\sigma^{1},\mathbf e_{k}, \rangle_{\mathfrak{U}}\cdot\varphi-\f u \cdot (\langle \sigma^{2},\mathbf e_{k}\rangle_{\mathfrak{U}}\cdot\nabla)\varphi \de x\right]\de s\right]
\end{align*}
for all progressively measurable processes $b_{\psi} \in L^\infty(\Omega;\C([0,T])) $ and all $k\in\N$. 
Since $\langle B^{\varphi}(s),\mathbf e_{k}\rangle_{\mathfrak{U}}$ is a progressively measurable process, the fundamental lemma implies that, $\P$-almost surely, for a.a.~$s\in[0,T]$ we have that
\begin{align*}
 & -\langle B^{\varphi}(s),\mathbf e_{k}\rangle_{\mathfrak{U}}=\int_{\mathcal{D}}\langle \sigma^{1},\mathbf e_{k}\rangle_{\mathfrak{U}}\cdot\varphi-\f u\cdot(\langle \sigma^{2}, \mathbf e_{k}\rangle _{\mathfrak{U}}\cdot\nabla)\varphi \de x 
\end{align*}
Hence, from \eqref{eq:M-1}, we conclude that, 
\begin{align*}
-\int_{\mathcal{D}}\varphi\cdot \f u(t)+ \int_{\mathcal{D}}\varphi\cdot \f u(0)+\int_{0}^{t}\int_{\mathcal{D}}[\f u\otimes \f u] & :\nabla\varphi+\frac{1}{2}[\mathcal{P}(\sigma^{2}\cdot\nabla)]^{2}\varphi\cdot \f u~\de x\de s\\
  +\int_{0}^{t}\int_{\mathcal{D}}(\nabla\varphi)_{\sym}:\de \mathfrak{R}(x)\de s&=-\int_{0}^{t}\int_{\mathcal{D}}\sigma^{1}\cdot\varphi-\f u\cdot(\sigma^{2}\cdot\nabla)\varphi \de x\,\de W(s)
\end{align*}
holds for all $\varphi \in \C^2_{0,\sigma}(\ov\D;\R^d)$, all $t\in[0,T]$ $\P$-almost surely.  
\end{proof}
The existence of strong solutions locally-in-time as given in the following definition was shown for the incompressible Euler equations with additive noise in~\cite{strongsol2} and with transport noise recently in~\cite{StronEulTransport} and for the incompressible Navier--Stokes equations in~\cite{strongNavBoth}. 

\begin{definition}[Local Pathwise Solutions]\label{def:strong}
  
 A random variable $\tilde{\f{u}}$ and a stopping time $\mathfrak{t}$ is called a (local) strong solution to system~\eqref{eq:stochastic_euler_additive} provided
 \begin{enumerate}[label=\roman*)]
     \item the process $t \mapsto \fs{u}(t \wedge \mathfrak{t}, \cdot)$ is $\left(\mathcal{F}_t\right)$-adapted, such that  
    $$ \tilde{\f u} \in L^\infty(\Omega; \C([0,T];L^2_\sigma(\D))\cap L^1(0,T;\C^1(\D)\cap H^2(\D)))
     $$
      with
$$
\begin{aligned}
    \mathbb{E}\left[\int_0^T\left\|\left[\mathcal P\nabla \cdot  (\tilde{\f u}\otimes \tilde{\f u})  - \nu \mathcal P \Delta \tu - \frac{1}{2}[\mathcal P(\sigma^2\cdot\nabla)]^2\tilde{\f u}\right](\cdot \wedge \mathfrak{t})\right\|_{L^{2}(\D)}^2\de t \right]&
\\+
\mathbb{E}\left[\int_0^T\left\|\left[\mathcal P\left((\sigma^1 + (\sigma^2 \cdot \nabla) \tilde{\f u} \right)\right](\cdot \wedge \mathfrak{t})\right\|_{H^2(\D)\cap 
     H^1_{0,\sigma}(\D)}^2\de t \right]&
<\infty ;
\end{aligned}
$$
\item for all $\boldsymbol{\varphi} \in C_{\sigma}^{\infty}\left(\D\right)$ and all $t \geq 0$ there holds $\mathbb{P}$-almost surely
$$
\begin{aligned}
\int_{\D} \fs{u}(t \wedge \mathfrak{t}) \cdot \varphi \mathrm{d} x & =\int_{\D} \fs{u}(0) \cdot \varphi \mathrm{d} x-\int_0^{t \wedge \mathfrak{t}} \int_{\D}(\nabla \fs{u}) \fs{u} \cdot \varphi \mathrm{d} x \mathrm{~d} s \\
&\quad - \int_0^{t\wedge\mathfrak{t}} \int_{\D} \frac{1}{2}\mathcal{P}(\sigma^2\cdot\nabla)\fs u \cdot \mathcal{P}(\sigma^2\cdot\nabla)\varphi +   \nu \nabla \fs u : \nabla \varphi \de x\, \de s \\
&\quad +\int_0^{t \wedge \mathfrak{t}} \int_{\D} \varphi \cdot \sigma^1 + (\sigma^2\cdot\nabla) \fs u \cdot \varphi  \mathrm{~d} x \mathrm{~d} W(s);
\end{aligned}
$$
\item we have $\nabla \cdot  \fs{u}(\cdot \wedge \mathfrak{t})=0 $ $\mathbb{P}$-almost surely.
 \end{enumerate} \end{definition}
\begin{theorem}[Weak--strong uniqueness]\label{thm:weakstrong}
     Let $(\f u, E)$ be an energy-variational solution to the stochastic incompressible Euler equations in the sense of Definition~\ref{def:Envar} (resp.~to the stochastic Navier--Stokes equations in the sense of Definition~\ref{def:EnvarNav})  and let $\tu$ be a strong solution in the sense of Definition~\ref{def:strong} with $\nu=0$ (resp.~with $\nu>0$) on the same stochastic basis. 
     Then, for a.e. $(x,t)\in D\times (0,T)$ it holds $ \f u(t \wedge \mathfrak{t}) = \tu (t\wedge \mathfrak{t}),  \, \mathbb{P}$-almost surely. 
 \end{theorem}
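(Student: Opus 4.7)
The plan is to use the strong solution itself as a test process in the energy-variational inequality, extending it past the stopping time by freezing it so that $\tilde{\f u}(\cdot\wedge\mathfrak t)$ is a bona fide test process in the sense of Definition~\ref{def:stoch}. Its Itô decomposition has $A_{\tilde{\f u}} = \mathds{1}_{[0,\mathfrak t]}\bigl(-\mathcal P(\tilde{\f u}\cdot\nabla)\tilde{\f u} + \nu\mathcal P\Delta\tilde{\f u} + \tfrac12[\mathcal P(\sigma^2\cdot\nabla)]^2\tilde{\f u}\bigr)$ and $B_{\tilde{\f u}} = \mathds{1}_{[0,\mathfrak t]}(\sigma^1+\mathcal P(\sigma^2\cdot\nabla)\tilde{\f u})$. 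I would then substitute $\f\varphi = \tilde{\f u}(\cdot\wedge\mathfrak t)$ into the energy-variational inequality (either~\eqref{eq:envar} or~\eqref{eq:envarNav}), apply Itô's formula to $\tfrac12\|\tilde{\f u}\|_{L^2}^2$ using the strong form of the equation, and combine the two identities into a relative-energy functional
\begin{equation*}
\mathcal E(t) := E(t) - \int_{\D}\f u(t)\cdot\tilde{\f u}(t\wedge\mathfrak t)\,\dx + \tfrac12\|\tilde{\f u}(t\wedge\mathfrak t)\|_{L^2(\D)}^2.
\end{equation*}
Because $E(t)\geq \tfrac12\|\f u(t)\|_{L^2}^2$, we have $\mathcal E(t)\geq \tfrac12\|\f u(t) - \tilde{\f u}(t\wedge\mathfrak t)\|_{L^2}^2\geq 0$, so proving $\mathcal E(t\wedge\mathfrak t)\equiv 0$ yields the theorem.

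The algebraic heart of the argument is pairwise cancellation. For the convective terms, combining $\int[\f u\otimes\f u]\!:\!\nabla\tilde{\f u}\,\dx$ from the EV and $-\int\f u\cdot(\tilde{\f u}\cdot\nabla)\tilde{\f u}\,\dx$ from $\f u\cdot A_{\tilde{\f u}}$ (using that $\tilde{\f u}$ and $\f u$ are divergence-free) produces the relative-energy form $\int[(\f u-\tilde{\f u})\otimes(\f u-\tilde{\f u})]\!:\!(\nabla\tilde{\f u})_{\sym}\,\dx$, which by the elementary trace-eigenvalue bound is estimated from below by $-\|(\nabla\tilde{\f u})_{\sym,-}\|_{L^\infty}\|\f u-\tilde{\f u}\|_{L^2}^2$. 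In the Navier--Stokes case the viscous contributions $\nu\nabla\f u\!:\!(\nabla\f u-\nabla\tilde{\f u})$ from the EV, $-\nu\int\f u\cdot\mathcal P\Delta\tilde{\f u}\,\dx$ from $A_{\tilde{\f u}}$, and $-\nu\|\nabla\tilde{\f u}\|_{L^2}^2$ from Itô sum to $-\nu\|\nabla(\f u-\tilde{\f u})\|_{L^2}^2\leq 0$, a good term to discard. For the noise, the stochastic $dW$-integrals combine to $\int(-\tilde{\f u}+\tilde{\f u})\cdot\sigma^1\,\dx\equiv 0$ using the orthogonality of $\sigma^1,\sigma^2$ and divergence-freeness of $\f u$; the Itô correction $\tfrac12\|\sigma^1\|_{L_2}^2 + \tfrac12\|\mathcal P(\sigma^2\cdot\nabla)\tilde{\f u}\|_{L^2(L_2)}^2$ from Itô on $\|\tilde{\f u}\|^2$ cancels exactly against the $\tfrac12\|\sigma^1\|^2$, $-\operatorname{Tr}\int\sigma^1 B_{\tilde{\f u}}$, $+\operatorname{Tr}\int\f u\cdot(\sigma^2\cdot\nabla)B_{\tilde{\f u}}$ terms in the EV and the $\tfrac12\int\f u\cdot[\mathcal P(\sigma^2\cdot\nabla)]^2\tilde{\f u}\,\dx$ contribution already tallied.

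After all cancellations, and using the linear identity $\|\f u-\tilde{\f u}\|_{L^2}^2 + 2\bigl(E-\tfrac12\|\f u\|_{L^2}^2\bigr) = 2\mathcal E$, the EV regularization term $-2\|(\nabla\tilde{\f u})_{\sym,-}\|_{L^\infty}[\tfrac12\|\f u\|_{L^2}^2 - E]$ precisely absorbs the convective defect, yielding
\begin{equation*}
\mathcal E(t) - \mathcal E(s-) \leq 2\int_s^t \|(\nabla\tilde{\f u})_{\sym,-}\|_{L^\infty(\D)}\,\mathcal E(\tau)\,\de\tau \qquad \forall\, s\leq t,\ \mathbb P\text{-a.s.}
\end{equation*}
The quantity $\|(\nabla\tilde{\f u})_{\sym,-}\|_{L^\infty}$ is integrable on $[0,\mathfrak t]$ almost surely (since $\tilde{\f u}\in L^1(0,T;\C^1(\D))$), so a standard càdlàg Gronwall lemma applied pathwise, together with $\mathcal E(0)=0$, forces $\mathcal E(t\wedge\mathfrak t)=0$ for all $t$, $\mathbb P$-a.s., and hence $\f u(t\wedge\mathfrak t)=\tilde{\f u}(t\wedge\mathfrak t)$ in $L^2(\D)$. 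The main technical obstacle is the meticulous bookkeeping of the Itô--Stratonovich and cross-quadratic noise terms—particularly verifying that the seemingly asymmetric appearance of $A_{\tilde{\f u}}$, $B_{\tilde{\f u}}$, and the independent $\tfrac12[\mathcal P(\sigma^2\cdot\nabla)]^2\f\varphi\cdot\f u$ contribution in the EV inequality produce an exact cancellation when $\f\varphi$ is itself a solution; the stopping-time adjustment and the càdlàg Gronwall are comparatively minor.
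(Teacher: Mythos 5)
Your proposal is correct and follows essentially the same route as the paper's proof: testing the energy-variational inequality with the strong solution itself, combining with the Itô energy identity for $\tu$ to form the relative energy $E - \int_{\D}\f u\cdot\tu\,\dx + \tfrac12\|\tu\|_{L^2}^2$, exploiting the same convective identity $\int[(\f u-\tu)\otimes(\f u-\tu)]:\nabla\tu\,\dx$ and noise cancellations, and closing with Gronwall against the nonnegativity of the relative energy. Your explicit freezing of the test process past the stopping time and your pathwise c\`adl\`ag Gronwall (rather than the paper's Gronwall after expectation, where the random coefficient $\|(\nabla\tu)_{\sym,-}\|_{L^\infty}$ would require a little extra care) are minor refinements of the same argument.
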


\begin{proof}[Proof of Theorem~\ref{thm:weakstrong}]
We prove the weak-strong uniqueness of solutions simultaneously for both systems. The only difference lies in the additional dissipative term multiplied by $\nu$, which vanishes for $\nu=0$.

 Let~$\tu$ be a local strong pathwise solution in the sense of Definition~\ref{def:strong}.
     A direct application of Itô's formula (in the Hilbert space version for $L_{\sigma}^2\left(\D \right)$, see \textit{e.g.}~\cite{DaPratoZabczyk}) shows that strong solutions satisfy the energy equality
\begin{equation}\label{eq:strongenin}
\begin{aligned}
     & \int_{\D}\frac{1}{2}|\tu(t)|^2 \mathrm{~d} x + \int_0^t \int_{\D} \nu | \nabla \tilde{\f{u}} |^2 \de x \de s \\&=\int_{\D}\frac{1}{2}|\tu(0)|^2 \mathrm{~d} x+ \int_0^t \int_{\D} \tu \cdot \sigma^1 \mathrm{~d} x \mathrm{~d} W(s)+\int_0^t\frac{1}{2}\|\sigma^1\|_{L_2\left(\mathfrak{U}, L^2_{\sigma}(\D)\right)}^2 \mathrm{~d} s 
      \\&\quad + \int_0^t\int_{\D} \frac{1}{2}[\mathcal P(\sigma^2\cdot\nabla)]^2\tilde{\f u} \cdot \tilde{\f u}  -  \frac{1}{2}[\mathcal P(\sigma^2\cdot\nabla)]^2\tilde{\f u} \cdot \tilde{\f u} \de x + \Tr{\int_{\D} \sigma^1 (\sigma^2\cdot\nabla) \tilde{\f u}\de x } \de s 
      \\&=\int_{\D}\frac{1}{2}|\tu(0)|^2 \mathrm{~d} x+ \int_0^t \int_{\D} \tu \cdot \sigma^1 \mathrm{~d} x \mathrm{~d} W(s)+\int_0^t\frac{1}{2}\|\sigma^1\|_{L_2\left(\mathfrak{U}, L^2_{\sigma}(\D)\right)}^2 \mathrm{~d} s 
\end{aligned}
\end{equation}
for all $t \in[0, \mathfrak{t}] $ and $\mathbb{P}$-almost surely. The last equation holds due to the cancellation of the Itô-Stratonovich correction term with  the additional term from the Itô formula. Both terms $ \sigma^1$ and $\sigma^2$ are independent such that $\Tr{\int_{\D} \sigma^1 (\sigma^2\cdot\nabla) \tilde{\f u}\de x }$.

We consider the relative energy 
$$
\begin{aligned}
\mathbb{E}\left[\left( E- \frac{1}{2}\| \f u\|_{L^2(\D)}^2 + \frac{1}{2}\| \f u-\tu\|_{L^2(\D)}^2\right)( t )  \right] 
\\
= \mathbb{E}\left[\left( E - \int_{\D} \f u \cdot \tu  + \frac{1}{2}\|\tu\|_{L^2(\D)}^2\right)(t)   \right] 
\end{aligned}
$$
We consider~\eqref{eq:envar} and~\eqref{eq:envarNav} with  $\f\varphi = \tilde{\f u}$ 
fulfilling $$ \de \tilde{\f u} +  \left( \mathcal P\nabla \cdot  (\tilde{\f u}\otimes \tilde{\f u})  - \nu \mathcal P \Delta \tu - \frac{1}{2}[\mathcal P(\sigma^2\cdot\nabla)]^2\tilde{\f u} \right)\de t = \mathcal P\left((\sigma^1 + (\sigma^2 \cdot \nabla) \tilde{\f u} \right) \de W \,,$$ 
such that $ A =    \nu \mathcal P \Delta \tu + \frac{1}{2}[\mathcal P(\sigma^2\cdot\nabla)]^2\tilde{\f u} -\mathcal P   \nabla \cdot  (\tilde{\f u}\otimes \tilde{\f u})$ and $ B = \mathcal P\left((\sigma^1 + (\sigma^2 \cdot \nabla) \tilde{\f u} \right)$.
We note that this is allowed due to the assumed regularity in~Definition~\ref{def:strong}.
From this, we infer that 
\begin{equation*}
            \begin{aligned}
     \left[ E  - \int_{\D} \f{u} \cdot \tu  \, \de x \right ] \Big|_{s-}^{t}&+  \int_s^t 
    \int_{\D} \nu \nabla \f u : (\nabla \f u - \nabla \tu) + \left[ \f{u} \otimes \f{u} \right] : \nabla \tu \, \dx  \, \de \tau  \\ &+\int_s^t 2 \| (\nabla \tu)_{\sym,-} \|_{L^\infty(D,\R^{d\times d})}
    \left[ \frac{1}{2}\int_{\D} | \f{u}|^2 \dx - E  \right] 
      \de \tau 
     \\& 
    + \int_s^t\int_{\D}  \frac{1}{2}[\mathcal P(\sigma^2\cdot \nabla)] ^2 \tu \cdot \f u   \de x \de \tau \\ & -\int_s^t  \int_{\Omega} ( \nabla \cdot  (\tilde{\f u}\otimes \tilde{\f u})  - \nu \Delta \tu - \frac{1}{2}[\mathcal P(\sigma^2\cdot\nabla)]^2\tilde{\f u} ) \cdot \f{u} \de x  \de 
    \tau\\
       \leq{}& \int_s^t   \int_{\D}  (\f u -  \tu )\cdot \sigma^1 + \f u \cdot [\mathcal P(\sigma^2 \cdot \nabla) \tu]   - \f{u}\cdot \mathcal P(\sigma^1 + (\sigma^2 \cdot \nabla) \tilde{\f u} )  \de x\, \de W({\tau})\\ &  + \int_s^t \frac{1}{2}
 \|\sigma^1\|_{L_2(\mathfrak{U}, L^2_\sigma)}^2  -
 \Tr{\int_{\D} \sigma ^1 \cdot\mathcal P (\sigma^1 + (\sigma^2 \cdot \nabla) \tilde{\f u} ) \de x} \de \tau \\
 & + \int_s^t \Tr{ \int_{\D} \f u \cdot \mathcal P(\sigma^2 \cdot \nabla) (\sigma^1 + \mathcal P(\sigma^2 \cdot \nabla) \tilde{\f u} ) \de x }
 \de 
\tau  \,.   \end{aligned}
    \end{equation*}
Adding the energy inequality for $\tu$, \textit{i.e.,}~\eqref{eq:strongenin} implies after some cancellations 
\begin{equation}\label{eq:envarNavthree}
            \begin{aligned}
     \left[ E  - \int_{\D} \f{u} \cdot \tu  \, \de x + \frac{1}{2} \| \tu\|_{L^2(\D)}^2 \right ] \Big|_{s-}^{t}&+  \int_s^t 
    \int_{\D} \nu |\nabla \f u - \nabla \tu|^2+ \left[ \f{u} \otimes \f{u} \right] : \nabla \tu \, \dx  \, \de \tau  \\ &+\int_s^t 2 \| (\nabla \tu)_{\sym,-} \|_{L^\infty(D,\R^{d\times d})}
    \left[ \frac{1}{2}\int_{\D} | \f{u}|^2\de x  - E  \right] \de \tau \\
    &-\int_s^t  \int_{\Omega} ( \nabla \cdot  (\tilde{\f u}\otimes \tilde{\f u})   ) \cdot \f{u} \de x  \de 
    \tau 
       \leq{} 0 \,,
\end{aligned}
    \end{equation}
where we used beside the algebraic cancellations that both noise terms are assumed to be orthogonal, \textit{i.e.,} $ \Tr{\int_{\D}\f u \cdot (\sigma^2 \cdot \nabla) \sigma^1\de x } =0 $.
    
Via the usual formula for the convection terms, we find 
\begin{align*}
    \int_{\D}\left[ \f{u} \otimes \f{u} \right] : \nabla \tu -( \nabla \cdot  (\tilde{\f u}\otimes \tilde{\f u})   ) \cdot \f{u} \de x
    =\int_{\D} [( \f u-\tu) \otimes (\f u-\tu)  ]: \nabla \tu \de x \,.
\end{align*}
Inserting this back into~\eqref{eq:envarNavthree} implies 
\begin{equation*}
            \begin{aligned}
     \left[ E  - \int_{\D} \f{u} \cdot \tu  \, \de x + \frac{1}{2} \| \tu\|_{L^2(\D)}^2 \right ] \Big|_{s-}^{t}&+  \int_s^t 
    \int_{\D} \nu |\nabla \f u - \nabla \tu|^2+  [( \f u-\tu) \otimes (\f u-\tu)  ]: \nabla \tu \de x\, \de \tau  \\ &+\int_s^t 2 \| (\nabla \tu)_{\sym,-} \|_{L^\infty(\D,\R^{d\times d})}
    \left[ \frac{1}{2}\int_{\D} | \f{u}|^2 - E  \right]  \de
    \tau \leq 0 \,, 
\end{aligned}
    \end{equation*}
such that after 
 observing that 
$$
\begin{aligned}
\int_{\D}(\f v \otimes \f v ):\nabla \tu \de x  ={}& \int_{\D}(\f v \otimes \f v ):(\nabla \tu)_{\sym,+} \de x + \int_{\D}(\f v \otimes \f v ):(\nabla \tu )_{\sym,-} \de x
\\\geq{}& -2 \| ( \nabla \tu)_{\sym,-} \|_{L^\infty(\R^{d\times d})} \frac{1}{2} \int_{\Omega} \tr( \f v \otimes \f v ) \de x 
\\={}& -2 \| ( \nabla \tu)_{\sym,-} \|_{L^\infty(\R^{d\times d})} \frac{1}{2} \| \f v \|^2_{L^2(\D)} \,
\end{aligned}$$
with $\f v =\f u-\tu $, we find  
\begin{multline*}
            \begin{aligned}
     \left[ E  - \int_{\D} \f{u} \cdot \tu  \, \de x + \frac{1}{2} \| \tu\|_{L^2(\D)}^2 \right ] \Big|_{s-}^{t}\\
    \leq{}  \int_s^t2 \| (\nabla \tu_{\sym,-} \|_{L^\infty(D,\R^{d\times d})}
    \left[ E- \frac{1}{2}\int_{\D} | \f{u}|^2 + \frac{1}{2} \| \f u - \tu \|_{L^2(\D)}^2   \right]  \de
    \tau\,.
\end{aligned}
    \end{multline*}
Taking expectation and applying Gronwall's inequality, we infer that 
\[
\begin{aligned}
   & \left[\left( E - \int_{\D} \f u \cdot \tu  + \frac{1}{2}\|\tu\|_{L^2(\D)}^2\right)(t)   \right] 
   \leq C 
\mathbb{E}\left[ \left( E(0) - \int_{\D} \f u(0) \cdot \tu (0) + \frac{1}{2}\|\tu(0)\|_{L^2(\D)}^2\right)   \right]  \,,
\end{aligned}
\]
which proves the assertion. 
 \end{proof}

\section{Acknowledgments}

 The authors acknowledge funding by the Deutsche Forschungsgemeinschaft (DFG, German Research Foundation) within SPP 2410 Hyperbolic Balance Laws in Fluid Mechanics: Complexity, Scales, Randomness (CoScaRa), project number 525941602. The authors thank Max Sauerbrey for insightful discussion.

\end{document}